\documentclass[11pt,reqno,oneside]{amsart}

%% Includes
\usepackage[letterpaper,hmargin=1.35in,vmargin=1.22in]{geometry}
\usepackage{natbib}
\usepackage{setspace}
\usepackage{amssymb}
\usepackage{tikz}
\usetikzlibrary{matrix,arrows,cd,calc}
\usepackage{mathtools}
\usepackage{xspace}
\usepackage[hyphens]{url}
\usepackage{hyperref}
\hypersetup{pdftex,colorlinks=true,allcolors=blue}
\usepackage{hypcap}
\usepackage{color}
\definecolor{darkgreen}{rgb}{0,0.45,0}
\hypersetup{colorlinks,urlcolor=blue,citecolor=darkgreen,linkcolor=darkgreen,linktocpage}
\usepackage[capitalize]{cleveref}

%% My environments
\numberwithin{equation}{section}

\newtheorem{defn}{Definition}[section]
\newtheorem*{defn*}{Definition}
\newtheorem{lem}[defn]{Lemma}
\newtheorem{prop}[defn]{Proposition}
\newtheorem{cor}[defn]{Corollary}
\newtheorem{thm}[defn]{Theorem}

\theoremstyle{remark}
\newtheorem{rmk}[defn]{Remark}
\newtheorem*{rmk*}{Remark}
\newtheorem{eg}[defn]{Example}

%% My Notation
\newcommand{\define}[1]{\textbf{\boldmath{#1}}}
\newcommand{\defeq}{\vcentcolon\equiv}

\newcommand{\sbt}{\bullet}
\DeclarePairedDelimiter{\ceil}{\lceil}{\rceil}
\newcommand*\sq{\mathbin{\vcenter{\hbox{\rule{.3ex}{.3ex}}}}}
\newcommand{\isPrincipal}{\mathsf{isPrincipal}}
\newcommand{\isKGnbun}{\mathsf{is\text{-}}K(G,n)\mathsf{\text{-}bundle}}
\newcommand{\north}{\mathsf{N}}
\newcommand{\south}{\mathsf{S}}

\newcommand{\coKer}{\mathsf{coker}}
\newcommand{\Aut}{\mathsf{Aut}}
\newcommand{\Grp}{\mathsf{Grp}}
\newcommand{\act}{\mathsf{act}}
\newcommand{\EM}{\mathsf{EM}}
\newcommand{\loopspacesym}{\mathsf{\Omega}}
\newcommand{\loopspace}[1]{\loopspacesym(#1)}
\newcommand{\suspsym}{\mathsf{\Sigma}}
\newcommand{\susp}[1]{\suspsym #1}
\newcommand{\fib}[2]{{\mathsf{fib}}_{#1}(#2)}
\newcommand{\fibb}[1]{{\mathsf{fib}}_{#1}}

\newcommand{\degg}{\mathsf{deg}}

\newcommand{\UU}{\mathcal{U}}

\newcommand{\refl}[1]{\ensuremath{\mathsf{refl}_{#1}}\xspace}
\newcommand{\merid}{\ensuremath{\mathsf{merid}}\xspace}
\newcommand{\transfib}[3]{\ensuremath{\mathsf{transport}^{#1}(#2,#3)\xspace}}

\newcommand{\unit}{\ensuremath{\mathbf{1}}\xspace}

\newcommand{\decode}{\ensuremath{\mathsf{decode}}\xspace}

\newcommand{\N}{\mathbb{N}}

\newcommand{\idfunc}[1]{\mathsf{id}_{#1}}
\newcommand{\ttrunc}[2]{\bigl\Vert #2\bigr\Vert_{#1}}
\newcommand{\ap}[1]{\mathsf{ap}_{#1}}

\newcommand{\precomp}[1]{{#1}^*}

\newcommand{\true}{\mathsf{true}}

\newcommand{\longhookrightarrow}{\lhook\joinrel\lra}
\newcommand{\s}{\mathbf{s}}
\newcommand{\rr}{\mathbf{r}}
\newcommand{\lra}       {\longrightarrow}
\newcommand{\llra}[1]   {\stackrel{#1}{\lra}}

 % Sigma and Pi notation
\makeatletter
\def\smsym{\sum}
\newcommand{\@thesum}[1]{\smsym_{(#1)}}
\newcommand{\sm}[1]{\@ifnextchar\bgroup{\@sm{#1}\sm}{\@sm{#1}}}
\newcommand{\@sm}[1]{\mathchoice{{\textstyle\@thesum{#1}}}{\@thesum{#1}}{\@thesum{#1}}{\@thesum{#1}}}

\def\prdsym{\prod}
\newcommand{\@ifnextchar@starorbrace}[2]
  {\@ifnextchar*{#1}{\@ifnextchar\bgroup{#1}{#2}}}

\newcommand{\@theprd}[1]{\prdsym_{(#1)}}
\newcommand{\@theiprd}[1]{\prdsym_{\{#1\}}}

\newcommand{\prd}{\@ifnextchar*{\@iprd}{\@prd}}
\newcommand{\@prd}[1]
  {\@ifnextchar@starorbrace
    {\@tprd{#1}\prd}
    {\@tprd{#1}}}
\newcommand{\@tprd}[1]{%
  \mathchoice{%
    {{\textstyle\@theprd{#1}}}}{\@theprd{#1}}{\@theprd{#1}}{\@theprd{#1}}}

\newcommand{\@iprd}[2]{\@ifnextchar@starorbrace%
  {\@tiprd{#2}\prd}%
  {\@tiprd{#2}}}
\newcommand{\@tiprd}[1]{
  \ifthenelse{\true}
    {\@@tiprd{#1}}%\@ifnextchar\bgroup{\@tiprd}{}}
    {\@tprd{#1}}}
\newcommand{\@@tiprd}[1]{\mathchoice{{\textstyle\@theiprd{#1}}}{\@theiprd{#1}}{\@theiprd{#1}}{\@theiprd{#1}}}
\newcommand{\eqvsym}{\simeq}    % infix symbol
\newcommand{\eqv}[2]{
  \@ifnextchar\bgroup
    {#1 \eqvsym \eqv{#2}}
    {#1 \eqvsym #2}
  }
\makeatother

%% Paper

\begin{document}

\title{Nilpotent Types and Fracture Squares in Homotopy Type Theory}

\author[L. Scoccola]{Luis Scoccola}

%\doublespacing

\begin{abstract}
    We develop the basic theory of nilpotent types and their localizations away from sets of numbers in Homotopy Type Theory.
    For this, general results about the classifying spaces of fibrations with fiber an Eilenberg--Mac Lane space are proven.
    We also construct fracture squares for localizations away from sets of numbers.
    All of our proofs are constructive.
\end{abstract}

\maketitle

\tableofcontents

\section{Introduction}

Nilpotent spaces play a very important role in the homotopy theory of spaces.
Many results that hold for simply connected spaces can be generalized to nilpotent spaces.
For example, any cohomology isomorphism between nilpotent spaces is a weak equivalence.
Nilpotent spaces also have a rich theory of localizations away from sets of numbers, including
fracture squares that reconstruct a space out of some of its localizations.

Nilpotent spaces are characterized by the fact that the maps in their Postnikov tower factor as finite composites of principal fibrations.
Nevertheless, they are usually defined in terms of some algebraic structure on the actions of their fundamental
group on their homotopy groups.
Both characterizations are useful, and one of the basic theorems in the theory is the fact that the
characterizations are equivalent.

In this paper we develop the basic theory of nilpotent spaces in Homotopy Type Theory.
In \cref{section:nilptypes} we prove the equivalence between the two characterizing properties of nilpotency (\cref{theorem:nilpotentchar}).
In order to do this, we study the relationship between unpointed Eilenberg--Mac Lane spaces and doubly-pointed
Eilenberg--Mac Lane spaces, and prove that the type of unpointed $n$-dimensional Eilenberg--Mac Lane spaces
is equivalent to the type of doubly-pointed $(n+1)$-dimensional Eilenberg--Mac Lane spaces (\cref{proposition:characterizationF}).
In \cref{section:cohiso} we follow the suggestion in \citep{shulman}, and we prove that cohomology isomorphisms
between nilpotent types induce isomorphisms in all homotopy groups.
In \cref{section:localization} we study the localization of nilpotent types and its effect on homotopy groups.
The main result of this section is that localization of a nilpotent type localizes its homotopy groups in the expected way (\cref{theorem:localizationlocalizes}).
Finally, in \cref{section:fracturesquares} we construct fracture squares for localizations away from sets of numbers.
In particular, we construct a fracture square for simply connected types that
avoids assuming Whitehead's principle (\cref{theorem:fracturetheorem}).
This construction is very different from the classical one that can be found in, e.g., \citep{MayPonto}.
For nilpotent types we only construct a square in the case the type is truncated.
The last two sections build on the results of \citep{RSS} and \citep{CORS}.

The proofs of the main results in this paper are different from the classical proofs (see the treatment of
nilpotent spaces and their localizations in \citep{MayPonto}, \citep{bousfieldkan}, \citep{locnilp}).
As we work in the type theory described in \citep{hottbook},
avoiding the use of Whitehead's theorem \citep[Section~8.8]{hottbook}, we get constructive proofs that work in any $\infty$-topos
\citep{KL}, \citep{LS}, \citep{shulmantopoi}, assuming the initiality conjecture for Homotopy Type Theory.
What makes the theory of nilpotent types work in this setting is
\cref{proposition:characterizationF}, which gives a hands-on characterization of $K(A,n)$-bundles, and
a simple definition of the action $\pi_1(X) \curvearrowright A$ associated to a pointed $K(A,n)$-bundle
$Y \to X$. This is inspired by \citep{shulman}, which discusses the advantages of working directly
with the classifying spaces of $K(A,n)$-bundles.

\medskip

\noindent
\textbf{Acknowledgements:}
I would like to thank Dan Christensen and Egbert Rijke for helpful conversations about this project,
and for comments during the writing of this paper.
I would also like to thank the anonymous referees for detailed comments and suggestions.

\medskip
\subsection{Conventions and Background}
We work in Homotopy Type Theory as described in \citep{hottbook}.
That is, intensional Martin-L\"of dependent type theory together with the univalence axiom and
higher inductive types.

We use the existence and universal property of Eilenberg--Mac Lane spaces \citep{FinsterLicata}, and
the existence of localizations at families of maps \citep{RSS}, as well as many of their properties in the
special case of localizations at self-maps of the circle \citep{CORS}.

\section{Nilpotent types}
\label{section:nilptypes}

In this section we develop the basic theory of nilpotent types.
%right below
The main result is \cref{theorem:nilpotentchar}, which characterizes nilpotent spaces
in terms of certain factorizations of the maps in their Postnikov towers.
%and \cref{theorem:cohomologyiso}, which establishes that a cohomology isomorphism between nilpotent types
%induces isomorphisms in all homotopy groups.
Although the main result is known classically, most of the proofs we give are quite different.
In particular, we give a description of
fibrations with fiber an Eilenberg--Mac Lane space (\cref{remark:descriptionbundles}),
which allows us to prove a generalization of the classification of nilpotent types (\cref{theorem:mainchar}).

In \cref{grouptheory} we show that the theory of groups and group actions can be seen as a particular
case of the theory of spaces and fibrations. In \cref{Kbundles} we study $K(A,n)$-bundles, by studying their
classifying space. We prove a general stabilization result about these classifying spaces (\cref{proposition:characterizationF})
which allows us to give a workable description of $K(A,n)$-bundles.
In \cref{principalfibrations} we discuss principal $K(A,n)$-bundles, and give a characterization in terms of group actions.
Finally, in \cref{nilpgroups} and \cref{nilpspaces}, we introduce nilpotent groups and types and characterize them
in terms of group actions.

\subsection{Homotopical interpretation of group theory}
\label{grouptheory}

    We start by setting up some basic notation and results about group theory and its interpretation
    as a particular case of the theory of Eilenberg--Mac Lane spaces.
    We will rely on several results in \citep{BRV}.

    Throughout the section, $G$ and $H$ will denote arbitrary groups.
    As is usual in Homotopy Type Theory, we assume that the underlying type of a group is a set.
    The type of maps between groups $G$ and $H$ will be denoted by $G \to_\Grp H$.

    The type $K(G,n)$ will denote the Eilenberg--Mac Lane space of type $G,n$ as defined in \citep{FinsterLicata}.
    If $n>1$, when we write $K(G,n)$ we will be assuming that $G$ is abelian.
    When regarded as a pointed type, the pointing of $K(G,n)$ will be denoted by $\ast : K(G,n)$,
    and the inclusion of the point by $\iota : \unit \to K(G,n)$.
    When regarded as a doubly-pointed type, we will use the point $\ast : K(G,n)$ twice.
    The first building block in the homotopical interpretation of group theory is the following result.

    \begin{thm}[see {\citep[Theorem~5.1]{BRV}}]
        \label{thm4}
    The map $\loopspacesym^n : \UU_\sbt \to \UU_\sbt$ restricts to an equivalence
    of categories between pointed, connected, $1$-truncated types and groups, when $n = 1$;
    and between pointed, $(n-1)$-connected, $n$-truncated types and abelian groups,
    when $n \geq 2$.
    \end{thm}

    \begin{defn}
    Let $G$ and $H$ be groups and let $\Aut(H)$ be the group of group automorphisms of $H$.
    An \define{action} of $G$ on $H$ is a group morphism $G \to_{\Grp} \Aut(H)$.
        The type of actions of $G$ on $H$ is denoted by $G \curvearrowright H$.
    \end{defn}

    \begin{defn}
        The \define{trivial} action, denoted by $\unit : G \curvearrowright H$, is the trivial
        group homomorphism $G \to_\Grp \Aut(H)$ given by $g \mapsto \idfunc{H}$.
        We say that an action is trivial if it is equal to the trivial action.
    \end{defn}

    \begin{defn}\label{definition:mapofactionsdef}
        Given groups $G$, $H$, $H'$, and actions $\alpha : G \curvearrowright H$, $\alpha' : G \curvearrowright H'$,
        a \define{map of actions} $\phi : \alpha \to_{\act} \alpha'$ is given by a group morphism $\phi : H \to_\Grp H'$
        such that for every $g : G$ and every $h : H$, we have $\phi(\alpha(g,h)) = \alpha'(g,\phi(h))$.
        We say that such a map is a \define{subaction} if its underlying group morphism is a monomorphism.
        Similarly, a map of actions is \define{surjective} if its underlying group morphism is an epimorphism.
    \end{defn}
    
    Notice that for a group morphism $\phi : H \to_{\Grp} H'$, being a map of actions is a mere property, since respecting
    the action is specified by a family of equalities in a set.

    The following lemma follows from \cref{thm4}.

    \begin{lem}\label{lemma:replacebyKG1}
            Let $X$ be a pointed, connected type. Then, using the identity morphism $\pi_1(X) \to_\Grp \pi_1(X)$ in
            the induction principle of $K(\pi_1(X),1)$ \citep[Section~3.1]{FinsterLicata}
        we get a pointed map $K(\pi_1(X),1) \to_\sbt \ttrunc{1}{X}$. This map is an equivalence.
    \qed
    \end{lem}

% right below
\begin{defn}
    Given a natural number $n\geq 1$, and a group $G$ (assumed to be abelian if $n>1$),
    define the \define{type of pointed Eilenberg--Mac Lane spaces} of type $G$, $n$ as follows:
    \[
        \EM_\sbt(G,n) \defeq \sum_{(K:\UU_\sbt)}\ttrunc{-1}{K =_{\UU_\sbt} K(G,n)}.
    \]
\end{defn}

    The type $\EM_\sbt(G,n)$ classifies group actions on $G$,
    in the following sense.

    %% right below
    \begin{lem}\label{remark:equivalenceofEM}
        Given $n \geq 1$ and $K,K' : \EM_\sbt(G,n)$, the map
        $(K =_{\UU_\sbt} K') \to (\loopspacesym^n K =_\Grp \loopspacesym^n K')$ 
        given by the functoriality of $\loopspacesym$ is an equivalence.
        This equivalence provides us with an equivalence $\EM_\sbt(G,n) \simeq K(\Aut(G),1)$ since $\EM_\sbt(G,n)$
        is pointed and connected, and its loop space is $K(G,n) =_{\UU_\sbt} K(G,n)$.
    \end{lem}
    \begin{proof}
        The first statement follows from \cref{thm4}.
        The second statement follows from the first one.
    \end{proof}

    \begin{lem}\label{lemma:actionisaction}
        Given a pointed, connected type $X$, an integer $n\geq 1$, and a group $G$, we have a map
        \[
          \left(X \to_\sbt \EM_\sbt(G,n)\right) \to \left(\pi_1(X) \curvearrowright G\right)
        \]
        given by functoriality of $\pi_1$ and the fact that $\EM_\sbt(G,n) \simeq K(\Aut(G),1)$.
        This map is an equivalence.
        Moreover, an action $\alpha : \pi_1(X) \curvearrowright G$ is trivial precisely when its
        corresponding map $\alpha : X \to \EM_\sbt(G,n)$ is null-homotopic.
    \end{lem}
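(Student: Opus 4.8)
The plan is to exhibit the claimed equivalence as a composite of standard equivalences. By \cref{remark:equivalenceofEM}, $\EM_\sbt(G,n)$ is a pointed connected $1$-type and there is a pointed equivalence $\EM_\sbt(G,n) \simeq K(\Aut(G),1)$. Since $\EM_\sbt(G,n)$ is a $1$-type, the universal property of the $1$-truncation, in its pointed form, gives $\bigl(\ttrunc{1}{X} \to_\sbt \EM_\sbt(G,n)\bigr) \simeq \bigl(X \to_\sbt \EM_\sbt(G,n)\bigr)$; \cref{lemma:replacebyKG1} provides a pointed equivalence $K(\pi_1(X),1) \simeq \ttrunc{1}{X}$; and the pointed equivalence above replaces the codomain, so the right-hand side becomes $\bigl(K(\pi_1(X),1) \to_\sbt K(\Aut(G),1)\bigr)$. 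The universal property of Eilenberg--Mac Lane spaces (\cite{BRV}) then identifies this with the type of group morphisms $\pi_1(X) \to_\Grp \Aut(G)$, and that type is just a repackaging of $\pi_1(X) \curvearrowright G$ (recall that an action of $\pi_1(X)$ on $G$ is the same data as a group morphism $\pi_1(X) \to_\Grp \Aut(G)$). Composing all of these yields the asserted equivalence.

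For the ``moreover'' clause I would trace the basepoint through this composite. An action is trivial exactly when the corresponding group morphism $\pi_1(X) \to_\Grp \Aut(G)$ is the constant morphism at $\idfunc{G}$; under the universal property of $K(-,1)$ the trivial morphism corresponds to the constant pointed map $K(\pi_1(X),1) \to_\sbt K(\Aut(G),1)$; and both precomposition with the truncation unit and transport along the pointed equivalence $K(\Aut(G),1) \simeq \EM_\sbt(G,n)$ send constant pointed maps to constant pointed maps. Hence the trivial action corresponds precisely to the constant pointed map $X \to_\sbt \EM_\sbt(G,n)$, that is, to a null-homotopic map. Conversely, since $X \to_\sbt \EM_\sbt(G,n)$ is a set---being equivalent to a type of group morphisms---``equals the constant pointed map'' is a mere proposition, and the equivalence carries it to ``is the trivial action''; so a null-homotopic map arises exactly from a trivial action.

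The main obstacle I anticipate is organisational rather than mathematical: extracting from \cite{BRV} the universal property of $K(-,1)$ in exactly the pointed-mapping-space form used above, and verifying that every equivalence in the chain is suitably pointed, so that the composite genuinely sends the trivial action to the constant map (this is what makes the ``moreover'' clause work). Beyond that, the only point requiring care is fixing the intended reading of ``null-homotopic'' for a pointed map; everything else is formal bookkeeping with the universal properties of truncation and of Eilenberg--Mac Lane spaces.
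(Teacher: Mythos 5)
Your proof is correct and takes essentially the same route as the paper: the same chain of equivalences through $\ttrunc{1}{X}$, \cref{remark:equivalenceofEM}, and \cite[Theorem~4]{BRV}, and the same basepoint-tracing argument for the ``moreover'' clause (the paper just states this last part more tersely, noting that the constant map induces the trivial action).
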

    \begin{proof}
    Using the fact that $\ttrunc{1}{X} \simeq K(\pi_1(X),1)$, together with \cref{remark:equivalenceofEM},
    we get the following chain of equivalences:
    \begin{align*}
        \left(X \to_\sbt \EM_\sbt(G,n)\right) &\simeq \left(\ttrunc{1}{X} \to_\sbt \EM_\sbt(G,n)\right)\\
                                              &\simeq \left(\pi_1(X) \to_{\Grp} \Aut(G)\right)\\
                                              &\equiv \left(\pi_1(X) \curvearrowright G\right),
    \end{align*}
        where the second equivalence follows from \cref{thm4}.

    The second claim follows from the fact that the constant map $X \to \EM_\sbt(G,n)$ induces the trivial action.
    \end{proof}

    Whenever $X$ is pointed and connected, and we are given an action
    $\alpha : \pi_1(X) \curvearrowright G$, we denote its associated map
    of type $X \to_\sbt \EM_\sbt(G,n)$ by $\hat{\alpha}$.
    When there is no risk of confusion, we will abuse notation and denote both with the same symbol.

    This characterization is functorial, in the following sense.

    \begin{lem}
        Given a pointed, connected type $X$, an integer $n\geq 1$, groups $H,H'$, and actions
        $\alpha : \pi_1(X) \curvearrowright H$ and $\alpha' : \pi_1(X) \curvearrowright H'$,
        there is an equivalence between the type of maps of actions $\alpha \to_\act \alpha'$,
        and the type of fiberwise pointed maps $\prd{x : X} \hat{\alpha}(x) \to_\sbt \hat{\alpha}'(x)$.
    \end{lem}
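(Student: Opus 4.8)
The plan is to reduce, as in the proof of \cref{lemma:actionisaction}, to a computation over $K(\pi_1(X),1)$, and to recognize a map of actions as a fixed point of a conjugation action.

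The first ingredient I would record is the ``maps'' analogue of \cref{remark:equivalenceofEM}: for $n \geq 1$ and groups $H,H'$ (abelian if $n>1$), the type $K(H,n) \to_\sbt K(H',n)$ is a set, and $\loopspacesym^n$ exhibits it as equivalent to $H \to_\Grp H'$. This follows from the universal property of Eilenberg-Mac Lane spaces (\cite{FinsterLicata}, \cite{BRV}); alternatively, since $K(H',n)$ is an $n$-type while $K(H,n)$ is $(n-1)$-connected, a computation of iterated loop spaces shows the mapping type is $0$-truncated, with $\pi_0$ the set $H \to_\Grp H'$ by the universal property. Since $\hat{\alpha}(x)$ and $\hat{\alpha}'(x)$ are merely identifiable with $K(H,n)$ and $K(H',n)$, it follows that $x \mapsto \bigl(\hat{\alpha}(x) \to_\sbt \hat{\alpha}'(x)\bigr)$ is a family of \emph{sets} over $X$, i.e.\ a map $X \to \mathsf{Set}$.

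Because $\mathsf{Set}$ is a $1$-type, this family factors through $\ttrunc{1}{X}$, so by the dependent universal property of the $1$-truncation, $\prd{x : X} \hat{\alpha}(x) \to_\sbt \hat{\alpha}'(x)$ is equivalent to the type of sections, over $\ttrunc{1}{X} \simeq K(\pi_1(X),1)$ (\cref{lemma:replacebyKG1}), of the induced $\mathsf{Set}$-valued family; write $G \defeq \pi_1(X)$. The pointings of $\hat{\alpha}$ and $\hat{\alpha}'$ identify the fibers over the basepoint with $K(H,n)$ and $K(H',n)$, hence (by the first ingredient) with $H \to_\Grp H'$. I claim the monodromy of this family along the loop of $g : G$ is the conjugation $\phi \mapsto \alpha'(g) \circ \phi \circ \alpha(g)^{-1}$: transport in $y \mapsto \bigl(\hat{\alpha}(y) \to_\sbt \hat{\alpha}'(y)\bigr)$ along a loop is conjugation by the transports in the families $\hat{\alpha}$ and $\hat{\alpha}'$; and by the way $\alpha$ is extracted from $\hat{\alpha}$ in \cref{lemma:actionisaction} through $\EM_\sbt(H,n) \simeq K(\Aut(H),1)$, together with \cref{remark:equivalenceofEM}, transport of $\hat{\alpha}$ along the loop of $g$ is the Eilenberg-Mac Lane automorphism of $K(H,n)$ realizing $\alpha(g)$ on $\loopspacesym^n$, and similarly for $\hat{\alpha}'$ and $\alpha'$. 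Thus, as a $G$-set, the family is $H \to_\Grp H'$ with $g \cdot \phi \defeq \alpha'(g) \circ \phi \circ \alpha(g)^{-1}$.

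Finally, sections over $K(G,1)$ of a $\mathsf{Set}$-valued family are the fixed points of the corresponding $G$-action, so this type of sections is $\sm{\phi : H \to_\Grp H'}\, \prd{g : G}\bigl(\alpha'(g) \circ \phi \circ \alpha(g)^{-1} = \phi\bigr)$; and $\alpha'(g) \circ \phi \circ \alpha(g)^{-1} = \phi$ is equivalent to $\prd{h : H}\, \phi(\alpha(g,h)) = \alpha'(g, \phi(h))$, which is precisely the condition that $\phi$ be a map of actions $\alpha \to_\act \alpha'$. Composing the equivalences above yields the statement; unwound, the equivalence sends a map of actions $\phi$ to the fiberwise pointed map that applies the Eilenberg-Mac Lane morphism of $\phi$ in each fiber. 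I expect the crux to be the monodromy computation of the third paragraph: tracing the definitions of $\hat{\alpha}$ and of the correspondence in \cref{lemma:actionisaction} to pin down the twisting, and checking that the handedness conventions make the fixed-point condition coincide with the map-of-actions condition — any discrepancy in handedness only reindexes the quantifier over $g$ and is harmless.
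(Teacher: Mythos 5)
Your proposal is correct and follows essentially the same route as the paper: both factor the fiberwise mapping type through $\ttrunc{1}{X}$ (using that pointed maps between $n$-dimensional Eilenberg-Mac Lane spaces form a truncated type), reduce to $X \equiv K(G,1)$ via \cref{lemma:replacebyKG1}, and then compute the transport in the family $y \mapsto \bigl(\hat{\alpha}(y) \to_\sbt \hat{\alpha}'(y)\bigr)$ to be conjugation by the actions, matching the map-of-actions condition. Your "fixed points of the conjugation $G$-action on $H \to_\Grp H'$" is the same data the paper packages as a section over $K(G,1)$ together with the transport-compatibility square, translated via \cite[Theorem 4]{BRV} into a commutative square of group morphisms; the difference is presentational, not substantive.
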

% right below
    \begin{proof}
        The maps $\hat{\alpha} : X \to_\sbt \EM_\sbt(H,n)$ and $\hat{\alpha}' : X \to_\sbt \EM_\sbt(H',n)$
        factor through the truncation $X \to \ttrunc{1}{X}$, since both $\EM_\sbt(H,n)$ and $\EM_\sbt(H',n)$
        are $1$-truncated. Let us refer to these extensions as $\beta$ and $\beta'$ respectively.
        Moreover, for every $x : X$, the type $\hat{\alpha}(x) \to_\sbt \hat{\alpha}'(x)$ is $1$-truncated, by \cref{thm4},
        so $\prd{x : X} \hat{\alpha}(x) \to_\sbt \hat{\alpha}'(x)$ is equivalent to $\prd{x : \ttrunc{1}{X}} \beta(x) \to_\sbt \beta'(x)$.
        Now, by \cref{lemma:replacebyKG1}, it is enough to provide an equivalence
        \[
            \left( \prd{x : X'} \hat{\alpha}(x) \to_\sbt \hat{\alpha}'(x) \right) \simeq (\alpha \to_\act \alpha')
        \]
        where we have implicitly used the equivalence $X \simeq X' \equiv K(\pi_1(X),1)$ of the lemma.

        This reduction lets us use the induction principle of $K(\pi_1(X),1)$ \citep[Section~3.1]{FinsterLicata}.
        In this case, the principle tells us that the type $\prd{x : X'} \hat{\alpha}(x) \to_\sbt \hat{\alpha}'(x)$
        is equivalent to the type of maps $\phi : \hat{\alpha}(\ast) \to_\sbt \hat{\alpha}'(\ast)$
        together with a proof that for every $l : \ast = \ast$, we have $\transfib{\hat{\alpha}(-) \to_\sbt \hat{\alpha}'(-)}{l}{\phi} = \phi$.
        Now, by path-induction, this last type is equivalent to the type of proofs that the following square is pointed commutative:
        \[
        \begin{tikzpicture}
          \matrix (m) [matrix of math nodes,row sep=2em,column sep=2em,minimum width=2em,nodes={text height=1.75ex,text depth=0.25ex}]
            { \hat{\alpha}(\ast) & \hat{\alpha}'(\ast) \\
            \hat{\alpha}(\ast) & \hat{\alpha}'(\ast). \\};
          \path[-stealth]
            (m-1-1) edge node [above] {$\phi$} (m-1-2)
                    edge node [left] {$\transfib{\hat{\alpha}}{l}{-}$} (m-2-1)
            (m-2-1) edge node [above] {$\phi$} (m-2-2)
            (m-1-2) edge node [right]{$\transfib{\hat{\alpha}'}{l}{-}$} (m-2-2)
            ;
        \end{tikzpicture}
        \]
        Recall that the map $\phi : \hat{\alpha}(\ast) \to_\sbt \hat{\alpha}'(\ast)$ corresponds to a group morphism $\psi : H \to_\Grp H'$.
        Then, \cref{thm4} implies that the above square is pointed commutative exactly when
        the square of group morphisms
        \[
        \begin{tikzpicture}
          \matrix (m) [matrix of math nodes,row sep=2em,column sep=2em,minimum width=2em,nodes={text height=1.75ex,text depth=0.25ex}]
            { H & H' \\
              H & H' \\};
          \path[-stealth]
            (m-1-1) edge node [above] {$\psi$} (m-1-2)
                    edge node [left] {$\alpha(|l|)$} (m-2-1)
            (m-2-1) edge node [above] {$\psi$} (m-2-2)
            (m-1-2) edge node [right]{$\alpha'(|l|)$} (m-2-2)
            ;
        \end{tikzpicture}
        \]
        is commutative.
        So the type $\prd{x : X'} \hat{\alpha}(x) \to_\sbt \hat{\alpha}'(x)$ is equivalent to the type of group morphisms
        $H \to_\Grp H'$ that respect the actions, as needed.
    \end{proof}

\begin{defn}
    A map $i : H \to_\Grp H'$ is a \define{normal inclusion} if it is injective and whenever we have
    $h : H$, $h' : H'$, the type $\fib{i}{h' i(h) h'^{-1}}$ is inhabited.
    We denote the type of normal inclusions of $H$ into $H'$ by $H \lhd H'$.
\end{defn}

Notice that being a normal inclusion is a mere property of a group morphism.

\begin{eg}
    Any inclusion with codomain an abelian group is a normal inclusion.
\end{eg}

Using quotients it is easy to construct the cokernel, or quotient, of a normal inclusion.
We will not give the details here, since the construction is analogous to the set theoretic construction.

\begin{defn}
    Given a normal inclusion $i : H \to_\Grp H'$ we denote its \define{cokernel}, or \define{quotient},
    by $\coKer(i) : \Grp$, or $H'/H : \Grp$.
\end{defn}

\begin{defn}
    Given an action $\alpha' : G \curvearrowright H'$, a \define{normal subaction} is given by an action
    $\alpha : G \curvearrowright H$, and a map of actions $\phi : \alpha \to_\act \alpha'$
    such that its corresponding group morphism $H \to_\Grp H'$ is a normal inclusion.
    The type $\alpha \lhd_\act \alpha'$ denotes the type of maps of actions $\alpha \to_\act \alpha'$, together
    with a proof that the underlying morphism is a normal inclusion.
\end{defn}

In the context of the above definition, the usual set theoretic construction provides,
for each normal subaction $\phi : \alpha \to_\act \alpha'$, a quotient action $\alpha'/\alpha : G \curvearrowright H'/H$.

\begin{defn}\label{remark:actionofses}
For a group morphism $G \xrightarrow{q} H$,
let the \define{kernel} of $q$ be the group morphism $I \to_\Grp G$ where
$I \defeq \left(\sum_{(l : G)} q(l) = e\right)$ with the group structure inherited from $G$,
and the morphism $I \to_\Grp G$ given by the first projection.

There is an induced action $G \curvearrowright I$ defined as follows.
Given $g : G$ and $(l, r) : I$, let
$g\cdot (l,r) \defeq (g^{-1} l g, \phi_{r})$, where $\phi_{r}$ is the straightforward proof that
$g^{-1} l g$ gets mapped to $e : H$ by $q$.
It is easy to check that this action is a normal subaction of the action $G\curvearrowright G$
given by conjugation.
\end{defn}

\begin{defn}
A sequence of group morphisms $I \to G \to H$ is \define{short exact} if $G \to H$ is surjective
and $I \to G$ is equivalent to the kernel of $G \to H$.
    A short exact sequence of groups $I \to G \to H$ is a \define{central extension}
    if the action $G \curvearrowright I$ given in \cref{remark:actionofses} is trivial.
\end{defn}

Note that being short exact and being a central extension are both mere properties
of a sequence $I \to G \to H$.

As is standard in group theory, whenever we have normal inclusions $H \lhd H' \lhd G$
such that the composite inclusion $H \to_\Grp G$ is normal, we can form a short exact sequence
of groups $H'/H \to G/H \to G/H'$.

\begin{defn}
    Let $H$ be a group. Given a sequence of group morphisms
    \[
        \unit \equiv H_0 \to H_1 \to \cdots \to H_k \equiv H,
    \]
    we say that it is a \define{central series} if all the morphisms are normal inclusions,
    all the composites $H_i \to H$ are normal inclusions,
    and for all $0 \leq i < k$, the short exact sequence
    \[
        H_{i+1}/H_i \to H/H_i \to H/H_{i+1}
    \]
    is a central extension.
\end{defn}

Note that being a central series is a mere property.

\begin{defn}
    The type of \define{nilpotent structures} for a group $H$ is defined to be the type of central series for $H$.
\end{defn}

\begin{defn}
    Given an action $\alpha : G\curvearrowright H$ of a group on another group,
    a \define{nilpotent structure} on this action consists of a sequence of maps of actions
    \[
        \unit \equiv \alpha_0 \to_\act \alpha_1 \to_\act \cdots \to_\act \alpha_k \equiv \alpha,
    \]
    such that the underlying sequence of group morphisms is a central series for $H$, and such that,
    for every $i$, the map $\alpha_i \to_\act \alpha_{i+1}$ and
    the composite $\alpha_i \to_\act \alpha$ are normal subactions, and
    the quotient action $\alpha_{i+1}/\alpha_i : G \curvearrowright H_{i+1}/H_i$ is trivial.
\end{defn}

Notice that the type of nilpotent structures on a group $G$ is equivalent to the type of nilpotent
structures for the action $G \curvearrowright G$ given by conjugation.

\begin{lem}\label{lemma:injectionssurjections}
    The type of nilpotent structures on a group $G$ is equivalent to the type of finite sequences of group epimorphisms
    \[
        G \equiv G_0' \twoheadrightarrow G_1' \twoheadrightarrow \cdots \twoheadrightarrow G_k' \equiv \unit
    \]
    such that for every $0 \leq i < k$, the short exact sequence
    \[
        K_i \to G_i' \twoheadrightarrow G_{i+1}',
    \]
    where $K_i$ is the kernel, is a central extension.
    The equivalence is given by mapping a nilpotent structure for $G$
    \[
        \unit \equiv G_0 \lhd G_1 \lhd \cdots \lhd G_k \equiv G
    \]
    to the sequence of group epimorphisms given by setting $G_i' \defeq G/G_i$, and defining the morphism $G_i' \to_\Grp G_{i+1}'$ to be the epimorphism
    $G/G_i \twoheadrightarrow G/G_{i+1}$.
\end{lem}
\begin{proof}
    %Assume given a nilpotent structure for $G$
    %\[
    %    \unit \equiv G_0 \lhd G_1 \lhd \cdots \lhd G_k \equiv G
    %\]
    %and define $G_i' \defeq G/G_i$, and the map $G_i' \to G_{i+1}'$ as the map $G/G_i \twoheadrightarrow G/G_{i+1}$.
    It is clear that the short exact sequence
    \[
        G_{i+1}/G_i \to G/G_i \to G/G_{i+1}.
    \]
    is isomorphic to the short exact sequence
    \[
        K_i \to G_i' \to G_{i+1}',
    \]
    and thus this last short exact sequence is a central extension, as needed.

    Going the other way, assume given 
    \[
        G \equiv G_0' \twoheadrightarrow G_1' \twoheadrightarrow \cdots \twoheadrightarrow G_k' \equiv \unit
    \]
    such that the short exact sequence
    \[
        K_i \to G_i' \to G_{i+1}'
    \]
    is a central extension for every $i$.
    Let $G_i$ be the kernel of the map $G \to G'_i$ obtained by composing the maps in the sequence.
    By functoriality of kernels, we have induced maps
    \[
        1 \equiv G_0 \to G_1 \to \cdots \to G_k \equiv G,
    \]
    which give rise to a filtration of $G$.
    We will prove that this filtration is a nilpotent structure for $G$.
    Since $G_i$ is normal in $G$, it must be normal in $G_{i+1}$, so the filtration consists of normal inclusions.
    It remains to show that the short exact sequences
    \[
        G_{i+1}/G_i \to G/G_i \to G/G_{i+1}
    \]
    are central extensions.
    By construction, this sequence is isomorphic to
    \[
        K_i \to G'_i \to G'_{i+1}
    \]
    which is a central extension by hypothesis.

    The fact that the constructions form an equivalence is clear.
\end{proof}

\subsection{$K(-,n)$-bundles}
\label{Kbundles}

    The goal of this section is to give a hands-on description of fibrations with fiber an Eilenberg--Mac Lane space
    (\cref{remark:descriptionbundles}) and their associated actions
    that will allow us to work with principal fibrations and nilpotent types.
    This description follows from \cref{proposition:characterizationF}, which,
    inspired by \citep{shulman}, studies the classifying spaces of $K(A,n)$-bundles, and establishes the
    equivalence between the type of unpointed $n$-dimensional
    Eilenberg--Mac Lane spaces and the type of doubly-pointed $(n+1)$-dimensional Eilenberg--Mac Lane spaces.

    The main reason to introduce doubly-pointed Eilenberg--Mac Lane spaces is the following.
    In order to associate an action of $\pi_1(X)$ on a group $G$ to a $K(G,n)$-bundle over $X$ in a natural way,
    we must construct a map from the space classifying $K(G,n)$-bundles to the space classifying $\pi_1(X)$-actions
    on $G$. To state this more concretely, we give the following definition.

% right below
\begin{defn}\label{unpointedEM}
Given a natural number $n \geq 1$, and a group $G$ (assumed to be abelian if $n > 1$),
define the \define{type of unpointed Eilenberg--Mac Lane spaces} of type $G$, $n$
as follows:
\[
    \EM(G,n) \defeq \sum_{K : \UU} \ttrunc{-1}{K =_{\UU} K(G,n)}.
\]
\end{defn}

    We show in \cref{remark:classificationanduniqueness}
    that the type of \cref{unpointedEM} classifies $K(G,n)$-bundles,
    so we need to construct a map $\EM(G,n) \to \EM_\sbt(G,n)$,
    since $\EM_\sbt(G,n)$ classifies $\pi_1(X)$-actions on $G$ (\cref{remark:equivalenceofEM}).
    In \citep{shulman}, this is done by showing that the map $\EM_\sbt(G,n) \to \EM(G,n)$ that forgets
    the pointing admits a retraction. The argument uses the truncated Whitehead theorem, and thus the corresponding 
    retraction is hard to work with. In this section we give a more explicit characterization of the retraction by showing
    that the map $\EM_\sbt(G,n) \to \EM(G,n)$ is equivalent to the map
    $\EM_\sbt(G,n+1) \to \EM_{\sbt\sbt}(G,n+1)$ that repeats the pointing, and as retraction we use the
    map $\EM_{\sbt\sbt}(G,n+1) \to \EM_\sbt(G,n+1)$ that forgets one of the points.
    Here, $\EM_{\sbt\sbt}(G,n) \defeq \sum_{K : \UU_{\sbt\sbt}} \ttrunc{-1}{K =_{\UU_{\sbt\sbt}} K(G,n)}$
    is the space of doubly-pointed Eilenberg--Mac Lane spaces of type $G$, $n$.

    In this section, $A$ and $B$ will denote abelian groups, and $G$ an arbitrary group.
    As usual, when writing $K(G,n)$, we will assume that $G$ is abelian, if $n>1$.

    \begin{defn}
        Given $n\geq 1$, a \define{$K(G,n)$-bundle} is a map such that all of its fibers are merely equivalent to $K(G,n)$.
        Formally, for a map $f : Y \to X$, we let
        \[
            \isKGnbun(f) \defeq \prod_{x : X} \ttrunc{-1}{\fib{f}{x} \simeq K(G,n)}.
        \]
        A \define{pointed $K(G,n)$-bundle} is a pointed map that is also a $K(G,n)$-bundle,
        together with a proof that its fiber over the base point is equivalent to $K(G,n)$ as a pointed type.
        Formally, for a pointed map $f : Y \to_\sbt X$, with $x_0 : X$ the basepoint of $X$, we let
        \[
            \isKGnbun_\sbt(f) \defeq \isKGnbun(f) \times \left(\fib{f}{x_0} \simeq_\sbt K(G,n)\right).
        \]
    \end{defn}
    Equivalently, a pointed $K(G,n)$-bundle is a pointed map $f : Y \to_\sbt X$ whose underlying (unpointed) map
    is a $K(G,n)$-bundle, together with a pointed map $K(A,n) \to_\sbt Y$, and a proof that the following square
    commutes as a square of pointed maps, and is a pullback:
     \[
        \begin{tikzcd}
            K(A,n) \ar[d]\ar[r] & Y \ar[d] \\
            \unit \ar[r] & X.
        \end{tikzcd}
    \]   

    We will write $K(-, n)$-bundle when we mean a $K(G, n)$-bundle for some group $G$.
%right before

%right after
    The forgetful map $F : \EM_\sbt(G,n) \to \EM(G,n)$ classifies $K(G,n)$-bundles
    and pointed $K(G,n)$-bundles in the following sense.

    \begin{lem}\label{remark:classificationanduniqueness}
    Let $X : \UU$. There is an equivalence between maps $X \to \EM(G,n)$
    and $K(G,n)$-bundles with codomain $X$.
    The equivalence is given, on the one hand, by pulling back $F$ along the map $X \to \EM(G,n)$
    to get a map $Y \to X$. And on the other hand, by sending $f : Y \to X$ to the type family $\fib{f}{-} : X \to \EM(G,n)$.

    In particular, the map
    \begin{align*}
        Y &\to \sum_{x:X} \fib{f}{x}\\
        y &\mapsto (f(y), (y,\refl{}))
    \end{align*}
    is an equivalence over $X$. This equivalence is natural in $Y$, in that,
    given $K(G,n)$-bundles $f : Y \to X$ and $f' : Y' \to X$, a map $g : Y \to Y'$, and
    a homotopy $h : f' \circ g \sim f$, we get an induced fiberwise map
    \begin{align*}
        \prod_{x : X} \fib{f}{x} &\xrightarrow{g_{\fibb{}}} \fib{f'}{x}\\
                         (y,p)   &\mapsto (g(y), h(y) \sq p)
    \end{align*}
    and the homotopy that is constantly reflexivity makes the following square commute
    \[
        \begin{tikzpicture}
          \matrix (m) [matrix of math nodes,row sep=2em,column sep=6em,minimum width=2em,nodes={text height=1.75ex,text depth=0.25ex}]
            { Y  & Y' \\
              \sum_{x:X}\fib{f}{x}  & \sum_{x:X}\fib{f'}{x} .  \\};
          \path[-stealth]
            (m-1-1) edge node [above] {$g$} (m-1-2)
                    edge node [left] {$\sim$} (m-2-1)
            (m-1-2) edge node [right] {$\sim$} (m-2-2)
            (m-2-1) edge node [above] {$(\idfunc{}, g_{\fibb{}})$} (m-2-2)
            ;
        \end{tikzpicture}
    \]
    \end{lem}
    \begin{proof}
        This is a specialization of \cite[Theorem~4.8.3]{hottbook}, where instead of the full
        universe $\UU$, we have the subuniverse of types equivalent to $K(G,n)$, namely
        $\EM(G,n)$.
    \end{proof}

    The same results in \cref{remark:classificationanduniqueness}
    hold when working with pointed $K(G,n)$-bundles:
    in this case we see $F$ as a pointed map
    %right below
    equipping the domain and codomain with the point given by $K(G,n)$.
    In that case, we have an equivalence between pointed maps $X \to_\sbt \EM(G,n)$ and
    pointed $K(G,n)$-bundles with codomain $X$, defined again by pulling back $F$ along
    the pointed map $X \to_\sbt \EM(G,n)$.

    Notice that given a map $X \to K(A,n+1)$, its fiber $Y\to X$ is a $K(A,n)$-bundle.
    But, as we will see in \cref{proposition:characterizationprincipalfibration},
    not every $K(A,n)$-bundle arises in this way.
    
\begin{defn}
    A (pointed) $K(A,n)$-bundle $f : Y \to X$ is a \textbf{(pointed) principal fibration}
    if it is the fiber of a (pointed) map $f' : X \to K(A,n+1)$.
    Formally we define
    \[
        \isPrincipal(f) \defeq \sum_{\substack{f' \,:\, X \to K(A,n+1)\\e \,:\, \fib{f'}{\ast} \,\simeq\, Y\,\,\,\,\,\,}} f \circ e \sim i,
    \]
    where $i : \fib{f'}{\ast} \to X$ is the projection from the fiber of $f'$ to $X$.
    In the case of pointed principal fibrations, $f'$, $e$, and the homotopy $f \circ e \sim i$ are taken to be pointed.
\end{defn}

    The goal of this section is to characterize principal fibrations and, more generally, maps
    that can be factored as a finite composite of principal fibrations.

    We start with a stabilization theorem for Eilenberg--Mac Lane spaces of abelian groups that
    follows directly from the main construction in \citep{FinsterLicata}.

\begin{lem}\label{lemma:freudKGn}
    Given $n\geq 1$ and $(K,k) : \EM_\sbt(A,n)$, the following composite is a pointed equivalence:
    \[
        K \to \loopspacesym \suspsym K \to \loopspacesym \ttrunc{n+1}{\suspsym K}.
    \]
    The first map is the Freudenthal map, and the second one is the functorial action of $\loopspacesym$ on
    the $(n+1)$-truncation unit. We are equipping suspensions with a point using the point constructor $\north$.
\end{lem}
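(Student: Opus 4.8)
The plan is to reduce at once to the case $K \equiv K(A,n)$. Being a pointed equivalence is a mere proposition, and $(K,k) : \EM_\sbt(A,n)$ comes equipped with an element of $\ttrunc{-1}{K =_{\UU_\sbt} K(A,n)}$, so we may assume we are given an honest pointed equivalence $K \simeq K(A,n)$; since the Freudenthal map and the truncation unit are natural in their argument, this equivalence transfers the statement for $K(A,n)$ to the statement for $K$. With this reduction, the lemma is essentially the main construction of \cite{FinsterLicata}: there $K(A,n+1)$ is defined, up to equivalence, as $\ttrunc{n+1}{\suspsym K(A,n)}$, and the canonical equivalence $K(A,n) \simeq \loopspacesym K(A,n+1)$ is precisely the displayed composite, so one acceptable route is simply to invoke \cite{FinsterLicata} after the reduction.

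For a self-contained argument I would instead carry out the standard connectivity bookkeeping. Since $K(A,n)$ is $(n-1)$-connected and $n$-truncated, $\suspsym K$ is $n$-connected, hence $\ttrunc{n+1}{\suspsym K}$ is $n$-connected and $(n+1)$-truncated, and $\loopspacesym\ttrunc{n+1}{\suspsym K}$ is $(n-1)$-connected and $n$-truncated. By the Freudenthal suspension theorem (\cite[Theorem~8.6.4]{hottbook}), the Freudenthal map $K \to \loopspacesym\suspsym K$ is $2(n-1)$-connected, while $\loopspacesym$ applied to the $(n+1)$-connected truncation unit $\suspsym K \to \ttrunc{n+1}{\suspsym K}$ is $n$-connected; hence the composite is $\min(2(n-1), n)$-connected. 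For $n \geq 2$ this equals $n$, and an $n$-connected map between $n$-truncated types is an equivalence, so, being a map of pointed types, the composite is a pointed equivalence.

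The case $n = 1$ is the main obstacle: Freudenthal then only gives that $K(A,1) \to \loopspacesym\suspsym K(A,1)$ is $0$-connected, so the bound above degenerates to $0$, which is not enough to force an equivalence of $1$-types. I would handle this case via the bottom step of the inductive delooping in \cite{FinsterLicata}, where the equivalence $\loopspacesym\ttrunc{2}{\suspsym K(A,1)} \simeq K(A,1)$ is obtained from the explicit higher-inductive presentation of $K(A,1)$ rather than from a connectivity estimate; equivalently, one can observe (via the Hurewicz theorem) that the suspension homomorphism $\pi_1 K(A,1) \to \pi_2 \suspsym K(A,1)$ is the abelianization map, hence an isomorphism because $A$ is abelian. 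In either route the endgame is the same: the map induced on $\pi_n$ is the suspension homomorphism, which is an isomorphism (by Freudenthal for $n \geq 2$ and by the above for $n = 1$), and a pointed map between $(n-1)$-connected, $n$-truncated, connected pointed types that induces an isomorphism on $\pi_n$ is a pointed equivalence by \cite[Theorem~4]{BRV}.
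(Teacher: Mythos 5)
Your proposal is correct and takes a genuinely different (and more self-contained) route than the paper, which, after the same reduction to $K\equiv K(A,n)$, simply invokes the relevant parts of \cite{FinsterLicata}: Theorem~4.3 (where the Freudenthal map appears as $\decode'$) for $n=1$, and Lemma~5.3 with Corollary~5.2 for $n>1$. Your connectivity bookkeeping for $n\geq 2$ is sound: Freudenthal gives $2(n-1)$-connectedness of $K\to\loopspacesym\suspsym K$, looping the $(n+1)$-connected truncation unit gives an $n$-connected map (the relevant $\loopspacesym$-target is simply connected since $n\geq 1$, so checking the basepoint fiber suffices), the composite is $\min(2(n-1),n)=n$-connected, and an $n$-connected map between $n$-truncated types is an equivalence. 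You correctly identify that this degenerates at $n=1$, and your two proposed fixes — the explicit HIT presentation of $K(A,1)$ from \cite{FinsterLicata}, or the observation that the suspension homomorphism $\pi_1K(A,1)\to\pi_2\suspsym K(A,1)$ is abelianization and hence an isomorphism because $A$ is abelian — are both standard and adequate; the paper's citation to \cite[Theorem~4.3]{FinsterLicata} is essentially your first fix. One small point your reduction glosses over: "being a pointed equivalence" is a mere proposition only once the pointedness path has been exhibited (it is a genuine data component), which the paper checks explicitly at the outset ($k\mapsto\refl{\north}\mapsto\refl{|\north|}$). Your appeal to naturality of the Freudenthal map and the truncation unit does transport that path along $K\simeq K(A,n)$, so there is no real gap, but it would be worth stating the pointedness witness before invoking mere-proposition reasoning. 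The trade-off is that your argument is more transparent about where Freudenthal's connectivity bound enters and what goes wrong at $n=1$, while the paper's is shorter by leaning on the cited computations.
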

% right before
\begin{proof}
    To see that it is a pointed map notice that the base point $k : K$ gets mapped to $\refl{\north}$ by the Freudenthal map.
    The second map then sends this point to $\refl{|\north|}$, by the functoriality of $\loopspacesym$ on paths.

    Since being an equivalence is a mere proposition, it is enough to prove the statement when $K \equiv K(A,n)$.
    Although this is proven in \citep[Theorem~5.4]{FinsterLicata}, it is not stated in exactly this way, so we give some details.
    In the proof, the cases $n=1$ and $n>1$ are considered separately.
    When $n=1$, \citep[Theorem~4.3]{FinsterLicata} is used, and the Freudenthal map appears as $\decode'$.
    When $n>1$, \citep[Lemma~5.3]{FinsterLicata} is used, and in that case the Freudenthal map appears explicitly
    in \citep[Corollary~5.2]{FinsterLicata}.
\end{proof}

    The stabilization theorem can be used to construct an equivalence $\EM_\sbt(A,n) \simeq \EM_\sbt(A,n+1)$,
    as in \citep[Theorem~6.7]{BRV}.
    The next theorem (\cref{proposition:characterizationF}) is a generalization of this fact. We start with the crucial construction.

    Consider the maps $\sigma : \EM_\sbt(A,n) \rightleftarrows \EM_\sbt(A,n+1) : \rho$
    given by 
    \begin{align*}
        \sigma(K,k) &\defeq \left(\ttrunc{n+1}{\susp K}, |\north|\right),\\
        \rho(K,k) &\defeq \left(\loopspacesym(K,k),\refl{k}\right).
    \end{align*}
    Here $\north$ and $\south$ are the point constructors of the suspension.
    Notice that we are omitting the proofs that the maps land in the correct types,
    which follows from \cref{lemma:freudKGn}.

    Consider also the ``non-pointed versions'' of $\sigma$ and $\rho$, namely
    the maps $\s : \EM(A,n) \rightleftarrows \EM_{\sbt\sbt}(A,n+1) : \rr$ given by
    \begin{align*}
        \s(K) &\defeq \left(\ttrunc{n+1}{\suspsym K}, |\north|, |\south|\right),\\
        \rr(K',p,q) &\defeq (p =_{K'} q).
    \end{align*}

    We have the following commutative square:
    \[
        \begin{tikzpicture}
          \matrix (m) [matrix of math nodes,row sep=2em,column sep=2em,minimum width=2em,nodes={text height=1.75ex,text depth=0.25ex}]
          { \EM_\sbt(A,n) & \EM_\sbt(A,n+1) \\
            \EM(A,n) & \EM_{\sbt\sbt}(A,n+1), \\};
          \path[-stealth]
            (m-1-1) edge node [above] {$\sigma$} (m-1-2)
                    edge node [left] {$F$} (m-2-1)
            (m-2-1) edge node [above] {$\s$} (m-2-2)
            (m-1-2) edge node [right]{$R$} (m-2-2)
            ;
        \end{tikzpicture}
    \]
    where $R : \EM_\sbt(A,n+1) \to \EM_{\sbt\sbt}(A,n+1)$ is the map that repeats the pointing.
    To see that this square commutes notice that given $(K,k) : \EM_{\sbt}(A,n)$,
    the carriers of $\s(F(K,k))$ and $R(\sigma(K,k))$ are definitionally equal.
    To see that the (double) pointings coincide, notice that $\s(F(K,k))$ is pointed 
    with $|\north|$ and $|\south|$, whereas $R(\sigma(K,k))$ is pointed with $|\north|$ and $|\north|$.
    But since we have $k : K$, we get $\ap{|-|}(\merid(k)) : |\north| = |\south|$, as required.

    Similarly, we have the following square that commutes on the nose:
    \[
        \begin{tikzpicture}
          \matrix (m) [matrix of math nodes,row sep=2em,column sep=2em,minimum width=2em,nodes={text height=1.75ex,text depth=0.25ex}]
          { \EM_\sbt(A,n) & \EM_\sbt(A,n+1) \\
            \EM(A,n) & \EM_{\sbt\sbt}(A,n+1). \\};
          \path[-stealth]
            (m-1-2) edge node [above] {$\rho$} (m-1-1)
            (m-1-1) edge node [left] {$F$} (m-2-1)
            (m-2-2) edge node [above] {$\rr$} (m-2-1)
            (m-1-2) edge node [right]{$R$} (m-2-2)
            ;
        \end{tikzpicture}
    \]

\begin{thm}\label{proposition:characterizationF}
    For any $n \geq 1$, the pair $(\sigma, \s)$ gives an equivalence between
    the map $F : \EM_\sbt(A,n) \to \EM(A,n)$ and the map $R : \EM_\sbt(A,n+1) \to \EM_{\sbt\sbt}(A,n+1)$.
    Its inverse is the pair $(\rho, \rr)$.
\end{thm}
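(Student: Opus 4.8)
Since both squares displayed just before the statement already commute, all that remains is to check that $\sigma$ and $s$ are equivalences with respective inverses $\rho$ and $r$; the pair $(\rho,r)$ is then an inverse arrow map because the square with $\rho,F,r,R$ commutes on the nose. Concretely, the plan is to produce the four homotopies $\rho\circ\sigma\htpy\idfunc{\EM_\sbt(A,n)}$, $r\circ s\htpy\idfunc{\EM(A,n)}$, $\sigma\circ\rho\htpy\idfunc{\EM_\sbt(A,n+1)}$ and $s\circ r\htpy\idfunc{\EM_{\sbt\sbt}(A,n+1)}$.

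The first homotopy is immediate: for $(K,k):\EM_\sbt(A,n)$, \cref{lemma:freudKGn} gives a pointed equivalence between $K$ and $\loopspacesym\ttrunc{n+1}{\susp{K}}$ carrying $k$ to $\refl{|\north|}$, which is precisely a pointed identification of $(K,k)$ with $\rho(\sigma(K,k))$; univalence and the fact that the second component of $\EM_\sbt(A,n)$ is a mere proposition promote it to an identification in $\EM_\sbt(A,n)$. For $r\circ s$, I would show that for $K:\EM(A,n)$ the map
\[
  \ap{|-|}\circ\merid \colon K \longrightarrow \bigl(|\north| =_{\ttrunc{n+1}{\susp{K}}} |\south|\bigr),
\]
whose codomain is the carrier of $r(s(K))$, is an equivalence. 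Being an equivalence is a mere proposition, so I may assume a point $k_0:K$; then $\ap{|-|}\circ\merid$ is the pointed equivalence of \cref{lemma:freudKGn} for $(K,k_0)$ followed by concatenation with the image under $\ap{|-|}$ of the meridian of $k_0$, hence a composite of equivalences. Univalence and propositionality of the second component of $\EM(A,n)$ then give $r(s(K))=K$.

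The remaining two homotopies are the substance of the argument, and I would handle them uniformly. Fix $(K',p,q):\EM_{\sbt\sbt}(A,n+1)$. Suspension recursion produces a map $m\colon\susp{(p=_{K'}q)}\to K'$ with $m(\north)=p$, $m(\south)=q$ and $\ap{m}(\merid(\gamma))=\gamma$; since $K'$ is merely $K(A,n+1)$ it is $(n+1)$-truncated, so $m$ factors through the $(n+1)$-truncation unit as a doubly pointed map
\[
  \bar m\colon \bigl(\ttrunc{n+1}{\susp{(p=_{K'}q)}},|\north|,|\south|\bigr) \longrightarrow (K',p,q),
\]
whose domain is $s(r(K',p,q))$; it remains to show $\bar m$ is an equivalence. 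This is a mere proposition, so I may assume a path $\gamma_0 : p=q$ (the underlying type of $K'$ is connected, so $p=q$ is merely inhabited) and then path-induct, reducing to $q\equiv p$. In that case $m$ is the counit $\susp{\loopspacesym(K',p)}\to(K',p)$ of $\suspsym\dashv\loopspacesym$ (sending $\north,\south\mapsto p$ and $\merid(\ell)\mapsto\ell$), and a short computation using the rule $\ap{m}(\merid(\ell))=\ell$ shows that the pointed equivalence of \cref{lemma:freudKGn} applied to $\loopspacesym(K',p):\EM_\sbt(A,n)$ is a section of $\loopspacesym\bar m$, so $\loopspacesym\bar m$ is an equivalence. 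Finally, $\bar m$ is a pointed map between $n$-connected, $(n+1)$-truncated types whose loop map is an equivalence, so its fibre over the basepoint is connected (it is the fibre of a map between $n$-connected types and $n\geq1$) and has contractible loop space (it is equivalent to the fibre of the equivalence $\loopspacesym\bar m$), hence is contractible; thus $\bar m$ is an equivalence. Univalence for doubly pointed types and propositionality of the second component of $\EM_{\sbt\sbt}(A,n+1)$ then give $s(r(K',p,q))=(K',p,q)$. For $\sigma\circ\rho$ one runs the analogous argument starting from $(K',k'):\EM_\sbt(A,n+1)$ with the counit $\susp{\loopspacesym(K',k')}\to(K',k')$ directly, obtaining this time a pointed equivalence $\sigma(\rho(K',k'))\to(K',k')$, hence $\sigma\circ\rho\htpy\idfunc{\EM_\sbt(A,n+1)}$.

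The main obstacle is exactly this last block: identifying $m$, after the path induction on a merely chosen $\gamma_0$, with the counit of $\suspsym\dashv\loopspacesym$ so that \cref{lemma:freudKGn} can be reused, and making sure it is $\bar m$ itself --- with its prescribed behaviour on \emph{both} distinguished points --- that one shows to be an equivalence, rather than producing merely some abstract doubly pointed equivalence of the two types.
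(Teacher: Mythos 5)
Your proposal is correct and follows essentially the same route as the paper: both reduce to showing the four round-trip maps are equivalences, using \cref{lemma:freudKGn} as the key ingredient, the triangle-identity computation $\ap{m}(\merid(\ell))=\ell$ to see that $\loopspacesym$ of the truncated counit has the Freudenthal map as a section, and \cite[Theorem~8.8.2]{hottbook} together with connectedness/mere-inhabitation to pass from the pointed or diagonal case to the general one. The only difference is organizational: the paper packages the transfer from the pointed composites ($\mu$, $\epsilon$) to the unpointed ones ($u$, $c$) via explicit commuting squares showing $F$ and $R$ respect the unit and counit, whereas you handle $u$ and $c$ directly by merely choosing a point (resp.\ a path $p=q$) and then re-running the Freudenthal argument; these are two presentations of the same idea.
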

\begin{proof}
    Our goal is to show that the pairs $\sigma$ and $\rho$, and $\s$ and $\rr$ form equivalences.
    The arguments in this proof are easier to understand by informally thinking of $F$, $R$, $\sigma$, $\rho$, $\s$ and $\rr$
    as $(\infty,1)$-functors between $(\infty,1)$-categories.
    From this perspective, the proof strategy is to show that $\sigma$ and $\rho$, and $\s$ and $\rr$ form ``adjoint equivalences''.
    Start by noticing that we have maps (which we interpret as unit and counit maps for $\sigma$ and $\rho$)
    \begin{align*}
        \mu &: (K,k) \to_\sbt \left( \loopspacesym(\ttrunc{n+1}{\suspsym K},|\north|), \refl{\north} \right) \equiv \rho(\sigma(K,k)),\\
        \epsilon &: \sigma(\rho(K,k)) \equiv \left(\ttrunc{n+1}{\suspsym \loopspacesym (K,k)}, |\north|\right) \to_{\sbt} (K,k),
    \end{align*}
    defined using the map of \cref{lemma:freudKGn} for $\mu$ and by truncation-induction and suspension-induction,
    mapping $|\north|$, $|\south|$, and $\ap{|-|}(\merid(e : k = k))$ to $k$, $k$, and $e$, respectively.
    Similarly, we have maps (which we interpret as unit and counit maps for $\s$ and $\rr$)
    \begin{align*}
        u &: K \to \left(|\north| =_{\ttrunc{n+1}{\suspsym K}} |\south|\right) \equiv \rr(\s(K)),\\
        c &: \s(\rr(K',p,q)) \equiv \left(\ttrunc{n+1}{\suspsym (p =_{K'} q)}, |\north|, |\south|\right) \to_{\sbt\sbt} (K', p, q),
    \end{align*}
    with $u$ defined by $u(x) \defeq \ap{|-|}(\merid(x))$,
    and $c$ defined by truncation-induction and suspension-induction, mapping $\north$ to $p$, $\south$ to $q$, and an equality $e : p = q$ to itself.
    It is then enough to show that $\mu$, $\epsilon$, $u$, and $c$ are equivalences,
    since, together with univalence, this establishes the result.

    \cref{lemma:freudKGn} directly implies that $\mu$ is an equivalence.
    The proof that $\epsilon$ is an equivalence requires a bit more work.
    Assume given an $n$-connected and $(n+1)$-truncated type $K$, and $k : K$.
    Since the domain and codomain of the counit are connected, it is enough to show that
    $\Omega \epsilon$
    is an equivalence (using \citep[Theorem~8.8.2]{hottbook}).
    To do this, we must prove and use one of the ``triangle identities''. Concretely, we want to show that the following
    triangle commutes:
    \[
        \begin{tikzpicture}
          \matrix (m) [matrix of math nodes,row sep=2em,column sep=4em,minimum width=2em,nodes={text height=1.75ex,text depth=0.25ex}]
            { \loopspacesym(K,k) & \loopspacesym\left(\ttrunc{n+1}{\suspsym \loopspacesym(K,k)}\right) \\
                  & \loopspacesym(K,k).  \\};
          \path[-stealth]
            (m-1-1) edge node [above] {$\mu_{\loopspacesym(K,k)}$} (m-1-2)
                    edge [double equal sign distance,-] node [left] {} (m-2-2)
            (m-1-2) edge node [right] {$\loopspacesym \epsilon_{(K,k)}$} (m-2-2)
            ;
        \end{tikzpicture}
    \]
    The top map is homotopic to the map that sends $l : k = k$ to $\ap{|-|}(\merid(l))\sq\ap{|-|}(\merid(\refl{k}))$.
    And since $\loopspacesym \epsilon$ respects composition, we see that $\ap{|-|}(\merid(l))\sq\ap{|-|}(\merid(\refl{k}))$
    gets mapped to $l \sq \refl{k} = l$, by the computation rules of truncation-induction and suspension-induction.
    This establishes the commutativity of the triangle.
    To use the ``triangle identity'', just notice that the top map is an equivalence and thus the right vertical map must be an equivalence, as needed.

    To prove that $u$ and $c$ are equivalences, we will use the fact that $F$ and $R$ respect the units and counits.
    By that we mean that, for $(K,k) : \EM_{\sbt}(A,n)$, we have a commutative square
    \[
        \begin{tikzpicture}
          \matrix (m) [matrix of math nodes,row sep=2em,column sep=2em,minimum width=2em,nodes={text height=1.75ex,text depth=0.25ex}]
            { F(K,k) & F(\rho(\sigma(K,k))) \\
              F(K,k) & \rr(\s(F(K,k))) \\};
          \path[-stealth]
            (m-1-1) edge node [above] {$F\mu$} (m-1-2)
                    edge [double equal sign distance,-] node [left] {} (m-2-1)
            (m-2-1) edge node [above] {$u$} (m-2-2)
            (m-1-2) edge node [right]{$\sim$} (m-2-2)
            ;
        \end{tikzpicture}
    \]
    and for $(K,k) : \EM_{\sbt}(A,n+1)$ we have a commutative square
    \[
        \begin{tikzpicture}
          \matrix (m) [matrix of math nodes,row sep=2em,column sep=2em,minimum width=2em,nodes={text height=1.75ex,text depth=0.25ex}]
            { R(\sigma(\rho(K,k))) & R(K,k) \\
              \s(\rr(R(K,k))) & R(K,k). \\};
          \path[-stealth]
            (m-1-1) edge node [above] {$R\epsilon$} (m-1-2)
                    edge node [left] {$\sim$} (m-2-1)
            (m-2-1) edge node [above] {$c$} (m-2-2)
            (m-1-2) edge [double equal sign distance,-] node [right]{} (m-2-2)
            ;
        \end{tikzpicture}
    \]

    To construct the first square we let the right vertical equivalence
    \[
        F(\rho(\sigma(K,k))) \equiv \left(|\north| =_{\ttrunc{n+1}{\suspsym K}} |\north|\right)
        \simeq \left(|\north| =_{\ttrunc{n+1}{\suspsym K}} |\south|\right) \equiv \rr(\s(F(K,k)))
    \]
    be given by concatenation with $\ap{|-|}(\merid(k))$.
    It is then clear that the square commutes, since the top map is
    homotopic to $x \mapsto \ap{|-|}(\merid(x))\sq\ap{|-|}(\merid(k)^{-1})$
    by the functoriality of $\ap{}$.

    The equivalence for the second square
    \[
        R(\sigma(\rho(K,k))) \equiv \left(\ttrunc{n+1}{\suspsym (k = k)}, |\north|, |\north|\right) \simeq
        \left(\ttrunc{n+1}{\suspsym (k = k)}, |\north|, |\south|\right) \equiv \s(\rr(R(K,k)))
    \]
    is given by the identity on the carrier, and by equating $|\north|$ and $|\south|$ using $\ap{|-|}(\merid(\refl{k}))$ for
    the second pointing.
    To see that the square commutes start by noticing that the composites agree on the carriers definitionally.
    The top right composite is pointed using $\refl{k}$ for both points.
    The left bottom composite is pointed using $\refl{k}$ for the first point and
    a path homotopic to $\refl{k}$ for the second point, by the computation rule of truncation-induction and
    suspension-induction. So the square in fact commutes.

    Finally, the commutativity of the first square implies that $u$ is an equivalence whenever we have $k : K$.
    But since being an equivalence is a mere proposition and any $K : \EM(A,n)$ is
    merely inhabited, $u$ is always an equivalence.
    Similarly, the commutativity of the second square implies that $c$ is also an equivalence.
    \end{proof}

    It should be possible to generalize the above argument to encompass bundles with fiber any higher group
    in a stable range, as in \citep[Section~6]{BRV}.

    Since $K(A,n)$-bundles over $X$ are classified by maps $X \to \EM_\sbt(A,n)$, we can use the above properties
    for each $x : X$ to get an alternative description of $K(-,n)$-bundles and of fiberwise maps between them.

    Recall the map $\s : \EM(A,n) \to \EM_{\sbt\sbt}(A,n+1)$ given by suspension followed by $(n+1)$-truncation.

    \begin{defn}
        Given a (pointed) $K(A,n)$-bundle $f : Y \to X$, its \define{classifying map}
        is defined to be the corresponding map $\ceil{f} \defeq \s\circ\fib{f}{-} : X \to \EM_{\sbt\sbt}(A,n+1)$.
    \end{defn}
   
    \begin{cor}\label{corollary:classification}
        The type of (pointed) $K(A,n)$-bundles over $X$ is equivalent to the type of (pointed) maps $X \to \EM_{\sbt\sbt}(A,n+1)$.
        This equivalence is given by mapping a $K(A,n)$-bundle $f : Y \to X$ to its classifying map $\ceil{f} : X \to \EM_{\sbt\sbt}(A,n+1)$.\qed
    \end{cor}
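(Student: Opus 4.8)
The plan is to assemble the statement from two ingredients already in place: the classification of $K(A,n)$-bundles by maps into $\EM(A,n)$ (\cref{remark:classificationanduniqueness}), and the equivalence $\EM(A,n)\simeq\EM_{\sbt\sbt}(A,n+1)$ that is implicit in \cref{proposition:characterizationF}. Indeed, asserting that $(\sigma,s)$ is an equivalence from the arrow $F$ to the arrow $R$ includes, as its component on codomains, the statement that $s:\EM(A,n)\to\EM_{\sbt\sbt}(A,n+1)$ is an equivalence, with inverse $r$. This is the only consequence of \cref{proposition:characterizationF} we will use.

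For the unpointed version, \cref{remark:classificationanduniqueness} gives an equivalence between the type of $K(A,n)$-bundles over $X$ and the type $X\to\EM(A,n)$, implemented by $f\mapsto\fibb{f}$. Since postcomposition with an equivalence is an equivalence, postcomposing with $s$ yields $(X\to\EM(A,n))\simeq(X\to\EM_{\sbt\sbt}(A,n+1))$, $g\mapsto s\circ g$. Composing the two equivalences sends a bundle $f:Y\to X$ to $s\circ\fibb{f}$, which is exactly the description in the statement.

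For the pointed version the argument is the same, now using the pointed half of \cref{remark:classificationanduniqueness} (pointed maps $X\to_\sbt\EM(A,n)$, with $\EM(A,n)$ pointed by $K(A,n)$, correspond to pointed $K(A,n)$-bundles) together with pointed postcomposition along $s$. The only extra point to check is that $s$ is a \emph{pointed} equivalence, i.e.\ that it carries the base point $K(A,n)$ of $\EM(A,n)$ to the base point of $\EM_{\sbt\sbt}(A,n+1)$. This is read off from the commutative square $s\circ F\simeq R\circ\sigma$ displayed just before \cref{proposition:characterizationF}: evaluating at the base point $(K(A,n),\ast)$ of $\EM_\sbt(A,n)$, the left side is $s(K(A,n))$, while $\sigma(K(A,n),\ast)=(\ttrunc{n+1}{\suspsym K(A,n)},|\north|)$ is the base point of $\EM_\sbt(A,n+1)$ (via the identification of \cref{lemma:freudKGn}), and $R$ merely repeats the pointing, so the right side is $(K(A,n+1),\ast,\ast)$. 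Hence pointed postcomposition with $s$ is an equivalence of pointed mapping types, and we conclude as before.

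There is no serious obstacle here: \cref{proposition:characterizationF} and \cref{remark:classificationanduniqueness} do all the work. The mild care required is the base-point bookkeeping in the pointed case, and verifying that the advertised formula $f\mapsto s\circ\fibb{f}$ simply drops out of composing the two equivalences rather than needing a separate argument.
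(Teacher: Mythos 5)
Your proof is correct and is the intended (though unwritten) argument: the corollary is a direct composite of the classification in \cref{remark:classificationanduniqueness} with the equivalence $s:\EM(A,n)\simeq\EM_{\sbt\sbt}(A,n+1)$ extracted from \cref{proposition:characterizationF}, and your pointed-case check that $s$ is a pointed equivalence via the square $s\circ F\simeq R\circ\sigma$ is exactly the bookkeeping one has to do.
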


    % right after
    The above result says that a (pointed) $K(A,n)$-bundle over $X$ is determined by the following data:
        \begin{itemize}
            \item a (pointed) map $\overline{f} : X \to \EM(A,n+1)$;
            \item two (pointed) sections $s_1, s_2 : \prd{x : X} \overline{f}(x)$.
        \end{itemize}
    More concretely, we have the following.

    \begin{cor}\label{remark:descriptionbundles}
    Let $X : \UU$.
    Any $K(A,n)$-bundle $f : Y \to X$ is equivalent to the projection map $pr_1 : \sum_{x:X} s_1(x) =_{\overline{f}(x)} s_2(x) \to X$,
   where $(\overline{f},s_1,s_2) \equiv \ceil{f}$ as in \cref{corollary:classification}, and the fiberwise equivalence $Y \to \sum_{x:X} s_1(x) =_{\overline{f}(x)} s_2(x)$ is given by the composite
        \[
            Y \to \sum_{x:X}\fib{f}{x} \xrightarrow{(\idfunc{},u)} \sum_{x:X} |\north| =_{\ttrunc{n+1}{\fib{f}{x}}} |\south|.
        \]
    \end{cor}
    \begin{proof}
        This follows from the equivalences of \cref{proposition:characterizationF} and \cref{remark:classificationanduniqueness}.
    \end{proof}

    As was hinted in its proof, the equivalence of \cref{proposition:characterizationF}
    is not just an equivalence of types.
    Although more general properties can be proven, it will be enough for us to know the following.

    \begin{prop}\label{proposition:functoriality}
    Given $K : \EM(A,n)$, $K' : \EM(B,n)$, and a map $\phi : K \to K'$, consider the map
    $\phi' \defeq \ttrunc{n+1}{\suspsym \phi} : \s(K) \equiv \ttrunc{n+1}{\suspsym K} \to \ttrunc{n+1}{\suspsym K'} \equiv \s(K')$
    defined by truncation-induction and
    suspension-induction, by mapping $\north, \south : \suspsym K$ to $\north, \south : \suspsym K'$ respectively,
    and $\merid(k)$ to $\merid(\phi(k))$.
    Then, the computation rule of suspension-induction gives a homotopy making the following square commute
    \[
        \begin{tikzpicture}
          \matrix (m) [matrix of math nodes,row sep=2em,column sep=2em,minimum width=2em,nodes={text height=1.75ex,text depth=0.25ex}]
            { K & K' \\
              \rr(\s(K)) & \rr(\s(K')). \\};
          \path[-stealth]
            (m-1-1) edge node [above] {$\phi$} (m-1-2)
                    edge node [left] {$u$} (m-2-1)
            (m-2-1) edge node [above] {$\ap{\phi'}$} (m-2-2)
            (m-1-2) edge node [right] {$u$} (m-2-2)
            ;
        \end{tikzpicture}
    \]
    \end{prop}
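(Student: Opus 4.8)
The plan is to verify commutativity of the square pointwise. Unwinding the definition of $u$ from the proof of \cref{proposition:characterizationF}, for $x : K$ the left--bottom composite of the square sends $x$ to $\ap{\phi'}\bigl(\ap{|-|}(\merid(x))\bigr)$ and the top--right composite sends $x$ to $\ap{|-|}(\merid(\phi(x)))$, both lying in $\bigl(|\north| =_{\ttrunc{n+1}{\suspsym K'}} |\south|\bigr)$. So I would produce, naturally in $x : K$, an identification of these two paths. First, though, one should check that the bottom map is well typed on the nose: by the computation rules of truncation-recursion and suspension-recursion, $\phi'(|\north|) \equiv |\suspsym\phi(\north)| \equiv |\north|$ and likewise $\phi'(|\south|) \equiv |\south|$, so $\ap{\phi'}$ indeed carries $r(s(K))$ into $r(s(K'))$ without any transport correction.

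The identification itself is a short calculation with $\ap{}$. Since $\phi' \equiv \ttrunc{n+1}{\suspsym\phi}$ is obtained from truncation-recursion into the $(n+1)$-truncated type $\ttrunc{n+1}{\suspsym K'}$, the naturality square of the truncation unit --- with horizontal maps $\suspsym\phi$ and $\phi'$ and with both vertical maps the units $|-|$ --- commutes strictly. Applying $\ap{}$ to that square at the path $\merid(x) : \north =_{\suspsym K} \south$, and using the law $\ap{g \circ f}(p) = \ap{g}(\ap{f}(p))$, one gets
\[
\ap{\phi'}\bigl(\ap{|-|}(\merid(x))\bigr) = \ap{|-|}\bigl(\ap{\suspsym\phi}(\merid(x))\bigr).
\]
The $\beta$-rule for $\suspsym\phi$ on meridians gives $\ap{\suspsym\phi}(\merid(x)) = \merid(\phi(x))$, so the right-hand side is $\ap{|-|}(\merid(\phi(x))) \equiv u(\phi(x))$. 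Concatenating, and noting that every step is natural in $x$, yields the required homotopy between the two composites $K \to r(s(K'))$ (which function extensionality promotes to an equality, if commutativity on the nose is wanted).

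What this amounts to is simply the naturality of the unit $u : K \to r(s(K))$ of \cref{proposition:characterizationF} with respect to maps $\phi$ of the fibers, so I do not expect any real obstacle. The one point that needs attention is that $\phi'$ must be taken to be the functorial action coming from truncation-recursion, so that the square of truncation units used above is strictly commutative; if one prefers to avoid appeals to judgmental computation rules, one instead carries along the canonical naturality path $\phi'(|a|) = |\suspsym\phi(a)|$ and checks that it contributes nothing once $\ap{}$ is applied to a meridian path whose endpoints are the point constructors $\north$ and $\south$.
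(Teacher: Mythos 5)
Your proof is correct and follows essentially the same route as the paper's: unfold $u$ on both sides and identify $\ap{\phi'}(\ap{|-|}(\merid(x)))$ with $\ap{|-|}(\merid(\phi(x)))$ via the computation rule of suspension-induction. The extra care you take --- checking that $\phi'$ preserves $|\north|$ and $|\south|$ judgmentally, and factoring the key step through strict naturality of the truncation unit plus $\ap{g\circ f}=\ap{g}\circ\ap{f}$ --- is just a more explicit account of what the paper compresses into ``by the computation rule of suspension-induction.''
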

    \begin{proof}
        Given $k : K$, we have on the one hand $u(\phi(k)) \equiv \ap{|-|}(\merid(\phi(k)))$.
        On the other hand $\ap{\phi'}(u(k)) \equiv \ap{\phi'}(\ap{|-|}(\merid(k))) = \ap{|-|}(\merid(\phi(k)))$,
        where the equality is given by the computation rule of suspension-induction.
    \end{proof}

    \cref{proposition:functoriality} implies that the description in \cref{remark:descriptionbundles} is functorial.
    To see this, assume given
    $K(-,n)$-bundles $f : Y \to X$, $f' : Y' \to X$ and a map $g : Y \to Y'$, such that $f' \circ g \sim f$.
    We have $t_g : \prod_{x : X} \overline{f}(x) \to \overline{f'}(x)$
    given by $t_g(x) \equiv (g_{\fibb{}}(x))'$, as in \cref{proposition:functoriality}.
    For every $x : X$ the map $t_g(x)$ is doubly-pointed definitionally, since it is defined
    by suspension-induction.
    Then \cref{proposition:functoriality} implies the following.

\begin{cor}\label{proposition:functorialitycharacterizationbundles}
    Assume given $K(-,n)$-bundles $f : Y \to X$, $f' : Y' \to X$ and a map $g : Y \to Y'$, such that $f' \circ g \sim f$.
    Then, the computation rule of suspension-induction gives a homotopy making the following square commute:
    \[
        \begin{tikzpicture}
          \matrix (m) [matrix of math nodes,row sep=2em,column sep=6em,minimum width=2em,nodes={text height=1.75ex,text depth=0.25ex}]
            { Y & Y' \\
              \left(\sm{x :X} s_1(x) =_{\overline{f}(x)} s_2(x)\right)  & \left(\sm{x :X} s'_1(x) =_{\overline{f'}(x)} s'_2(x)\right).  \\};
          \path[-stealth]
            (m-1-1) edge node [above] {$g$} (m-1-2)
                    edge node [left] {$\sim$} (m-2-1)
            (m-1-2) edge node [right] {$\sim$} (m-2-2)
            (m-2-1) edge node [above] {$(\idfunc{}, \lambda x.\ap{t_g(x)})$} (m-2-2)
            ;
        \end{tikzpicture}
    \]
    Here the vertical maps are given by the equivalence of \cref{remark:descriptionbundles}.\qed
\end{cor}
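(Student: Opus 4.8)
I would begin by recalling the content of Remark~\ref{remark:descriptionbundles} for $f$: the classifying map is $\ceil{f} \equiv s\circ\fibb{f}$, so one may take $\overline{f}(x) \equiv \ttrunc{n+1}{\suspsym\fibb{f}(x)}$ with $s_1(x) \equiv |\north|$ and $s_2(x) \equiv |\south|$, and the map $u$ of \cref{proposition:characterizationF} (which, as shown in its proof, is an equivalence) identifies the fiber $\fibb{f}(x)$ with $(s_1(x) =_{\overline{f}(x)} s_2(x))$. Taking $\sm{x:X}$ of this identification and composing with the standard equivalence $\sm{x:X}\fibb{f}(x) \simeq Y$ yields the left vertical equivalence of the square, and likewise for $f'$ on the right.

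First I would extract from $g$ its fiberwise restriction. Since $f = f'\circ g$, for each $x:X$ the map $g$ restricts to a map $\phi(x) : \fibb{f}(x)\to\fibb{f'}(x)$, i.e.\ a fiberwise map $\phi : \prd{x:X}\fibb{f}(x)\to\fibb{f'}(x)$; moreover, under the standard equivalences $Y\simeq\sm{x:X}\fibb{f}(x)$ and $Y'\simeq\sm{x:X}\fibb{f'}(x)$ the map $g$ is identified with the total map $\lambda(x,z).(x,\phi(x)(z))$. This is the naturality of the ``fiber–sum'' equivalence; it is checked by unwinding the definition of $\phi$, viewing the equality $f = f'\circ g$ as a homotopy via function extensionality.

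Next I would define, for $x:X$,
\[
    t_g(x) \defeq \ttrunc{n+1}{\suspsym \phi(x)} : \ttrunc{n+1}{\suspsym\fibb{f}(x)} \to \ttrunc{n+1}{\suspsym\fibb{f'}(x)},
\]
that is, $t_g(x) : \overline{f}(x) \to \overline{f'}(x)$. By the computation rules for truncation- and suspension-induction it sends $|\north|$ to $|\north|$ and $|\south|$ to $|\south|$, i.e.\ $s_1(x)$ to $s'_1(x)$ and $s_2(x)$ to $s'_2(x)$ definitionally, so it is a double-pointed map and defines a fiberwise double-pointed $t_g$. This $t_g(x)$ is exactly the map ``$\phi'$'' of \cref{proposition:functoriality} with $K\equiv\fibb{f}(x)$, $K'\equiv\fibb{f'}(x)$ and $\phi\equiv\phi(x)$, so that proposition supplies, for each $x:X$, a homotopy witnessing commutativity of
\[
    \begin{tikzcd}
        \fibb{f}(x) \ar[r,"\phi(x)"] \ar[d,"u"] & \fibb{f'}(x) \ar[d,"u"] \\
        (s_1(x) =_{\overline{f}(x)} s_2(x)) \ar[r,"\ap{t_g(x)}"] & (s'_1(x) =_{\overline{f'}(x)} s'_2(x)).
    \end{tikzcd}
\]
Applying $\sm{x:X}$ to this family of squares (which preserves commutativity) produces a commuting square whose top map is $(\idfunc{},\lambda x.\ap{t_g(x)})$, whose vertical maps are the equivalences of the first paragraph, and whose bottom map is the total map $\lambda(x,z).(x,\phi(x)(z))$; by the second paragraph this bottom map is identified with $g$ under the vertical equivalences. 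This is precisely the square asserted in the statement.

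The main obstacle is the identification in the second paragraph — that $g$ coincides with the total map of its own fiberwise restriction under the fiber–sum equivalences. This is conceptually routine but requires some care tracking the homotopy $f = f'\circ g$ and the transports occurring in the fibers of $f'$. Everything else is a direct appeal to \cref{proposition:functoriality} or bookkeeping of the definitional equalities built into the construction of $\ceil{f}$ in \cref{remark:descriptionbundles}.
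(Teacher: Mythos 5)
Your proof is correct and follows essentially the same approach as the paper's proof, which simply observes that $f$, $f'$ are represented by maps $\fibb{f}, \fibb{f'} : X \to \EM(\blank,n)$ and $g$ by a fiberwise map between them, and then applies \cref{proposition:functoriality} at each $x:X$. You have merely spelled out in full the bookkeeping (the fiber--sum identification of $g$ with the total map of its fiberwise restriction, and the application of $\sm{x:X}$ to the resulting family of commuting squares) that the paper leaves implicit.
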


    Let $F_\sbt : \EM_{\sbt\sbt}(A,n+1) \to \EM_{\sbt}(A,n+1)$ be the map that forgets the second point.

    \begin{defn}\label{definition:associatedaction}
        Given a pointed $K(A,n)$-bundle $f : Y \to X$ we define the \define{associated action}
        of $X$ on $A$ as the pointed map $F_\sbt \circ \ceil{f} : X \to_\sbt \EM_\sbt(A,n+1)$.

        We say that a pointed $K(A,n)$-bundle $f$ \define{lives over} an action $\phi : X \to_\sbt \EM_\sbt(A,n+1)$ if
        $\phi$ is equal to the action associated to $f$.
    \end{defn}
%right before

    \begin{eg}
        As is familiar from Homotopy Theory, for any pointed type $X$ and $n\geq 1$,
        we have an action $\pi_1(X) \curvearrowright \pi_n(X)$.
        The action of $\pi_1(X)$ on itself is defined to be the action by conjugation.
        The action of $\pi_1(X)$ on $\pi_n(X)$ is defined to be the action associated to the fibration $\ttrunc{n}{X} \to \ttrunc{n-1}{X}$,
        using \cref{definition:associatedaction} and \cref{lemma:actionisaction}.
    \end{eg}

    \begin{rmk}
    In the same vein as \cref{remark:descriptionbundles}, the associated action of a pointed $K(A,n)$-bundle over $X$
    is nothing but:
    \begin{itemize}
        \item the pointed map $\overline{f} : X \to_\sbt \EM(A,n+1)$;
        \item the first of its pointed sections $s_1 : \prod^\sbt_{x:X}\overline{f}(x)$.
    \end{itemize}
        where $\prod^\sbt$ is the pointed pi-type. An element of the pointed pi-type $\prod^\sbt_{x:X}\overline{f}(x)$
        consists of an element $s$ of the usual, unpointed
        pi-type, together with a path from $s(x_0)$ to the base point of $\overline{f}(x_0)$.
    \end{rmk}
% right before

    \begin{lem}\label{lemma:functorialityaction}
        Given pointed $K(-,n)$-bundles $f : Y \to_\sbt X$ and $f' : Y' \to_\sbt X$, and a pointed map
        $g : Y \to_\sbt Y'$ such that $f = f' \circ g$ as pointed maps, we have an induced map of actions
        $(\overline{f},s_1) \to_\act (\overline{f}',s_1')$.
    \end{lem}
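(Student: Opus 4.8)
The plan is to manufacture a fiberwise pointed map from $g$ and then transport it across the equivalence of the functoriality lemma stated just after \cref{lemma:actionisaction} --- the one identifying maps of actions $\alpha \to_\act \alpha'$ with fiberwise pointed maps $\prd{x:X}(\hat{\alpha}(x) \to_\sbt \hat{\alpha}'(x))$. Write $\alpha$ and $\alpha'$ for the associated actions of $f$ and $f'$, where $A$ and $B$ are the respective fiber groups; here $X$ is pointed connected, so by \cref{lemma:actionisaction} these are genuine actions of $\pi_1(X)$, and by \cref{definition:associatedaction} and \cref{remark:descriptionbundles} we have $\hat{\alpha} = F_\sbt \circ \ceil{f} : X \to_\sbt \EM_\sbt(A,n+1)$ with $\hat{\alpha}(x) \equiv (\overline{f}(x), s_1(x))$, and likewise $\hat{\alpha}' = F_\sbt \circ \ceil{f'} : X \to_\sbt \EM_\sbt(B,n+1)$ with $\hat{\alpha}'(x) \equiv (\overline{f'}(x), s'_1(x))$.

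First I would forget the second points and second sections. Since $f = f' \circ g$ already as a triangle of unpointed maps over $X$, \cref{proposition:functorialitycharacterizationbundles} applies and yields a fiberwise double-pointed map $t_g : \prd{x:X}(\overline{f}(x), s_1(x), s_2(x)) \to_{\sbt\sbt} (\overline{f'}(x), s'_1(x), s'_2(x))$; concretely, by \cref{proposition:functoriality}, $t_g(x)$ is $\ttrunc{n+1}{\suspsym g_x}$, where $g_x : \fibb{f}(x) \to \fibb{f'}(x)$ is the component of $g$ over $x$. Discarding the second pointing of each $t_g(x)$ leaves, for every $x : X$, a pointed map $\hat{\alpha}(x) \to_\sbt \hat{\alpha}'(x)$ --- the pointing survives because $\ttrunc{n+1}{\suspsym(-)}$ always preserves $\north$ --- hence an element of $\prd{x:X}(\hat{\alpha}(x) \to_\sbt \hat{\alpha}'(x))$.

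Finally I would apply the inverse of the equivalence above to this family, obtaining a map of actions $\alpha \to_\act \alpha'$, which is exactly the claimed $(\overline{f}, s_1) \to_\act (\overline{f'}, s'_1)$. This is where the pointedness hypotheses enter: because $g$ is pointed and $f = f' \circ g$ holds as pointed maps, the component $g_{x_0}$ over the base point is a pointed map $K(A,n) \to_\sbt K(B,n)$ between the pointed fibers, so the value of the family at $x_0$ is the pointed map $K(A,n+1) \to_\sbt K(B,n+1)$ induced by an honest group morphism $A \to B$; this is what guarantees that the output of the equivalence is genuinely a map of actions over a group morphism, rather than merely a fiberwise pointed family. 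The only real content beyond invoking \cref{proposition:functorialitycharacterizationbundles} and this equivalence is the bookkeeping of the several pointings involved --- the suspension point $\north$, the sections $s_1$ and $s_2$, the base point of $X$, and the base points of the fibers --- and I expect the one mild obstacle to be confirming that the family extracted from $t_g$ really is a fiberwise map of $\hat{\alpha}$ and $\hat{\alpha}'$ themselves, rather than of some re-pointed variants; once those identifications are in place, the two cited results supply everything else.
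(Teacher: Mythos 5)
Your proposal is correct and is essentially the paper's proof: both invoke \cref{proposition:functorialitycharacterizationbundles} to produce the fiberwise double-pointed map $t_g$ and then forget the second pointing of each $t_g(x)$. The paper stops there, since it treats $(\overline{f},s_1)\to_\act(\overline{f}',s_1')$ directly as a fiberwise pointed map under its declared abuse of notation, whereas you additionally unwind the equivalence from the unnamed lemma following \cref{lemma:actionisaction} to exhibit an honest algebraic map of actions; this extra bookkeeping is harmless and makes explicit what the paper leaves implicit.
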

    \begin{proof}
        Using the description of \cref{proposition:functorialitycharacterizationbundles},
        we see that we have in particular a map of actions $(\overline{f},s_1) \to_\act (\overline{f}',s_1')$.
    \end{proof}

\subsection{Principal fibrations}
\label{principalfibrations}
In this section, following \citep{shulman}, we prove that
principal fibrations are precisely the fibrations that live over the trivial action.
We will need the following general lemma.

    \begin{lem}
        \label{lemma:trivialintersection}
        Assume given a pair of maps $r : E \leftrightarrows B : s$ such that $q : r\circ s \sim \idfunc{B}$.
        Then, for every $b : B$, the following square is a pullback
        \begin{center}
            \begin{tikzpicture}
              \matrix (m) [matrix of math nodes,row sep=2em,column sep=2em,minimum width=1em,nodes={text height=1.75ex,text depth=0.25ex}]
                { \unit & B \\
                \fib{r}{b} & E. \\};
              \path[-stealth]
                (m-1-1) edge node [above] {$b$} (m-1-2)
                        edge node [left] {$(s(b),q(b))$} (m-2-1)
                (m-2-1) edge node [above] {} (m-2-2)
                (m-1-2) edge node [right]{$s$} (m-2-2)
                ;
            \end{tikzpicture}
        \end{center}
    \end{lem}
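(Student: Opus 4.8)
The plan is to compute the pullback $P$ directly as an iterated $\Sigma$-type and then simplify it by contracting singletons until only a based path space remains. Unfolding $\fib{r}{b} \equiv \sm{e : E}(r(e) = b)$, with its map to $E$ being the first projection $\pr_1$, the universal property of the pullback gives
\[
    P \;\simeq\; \sm{z : \fib{r}{b}}\;\sm{b' : B}\;\bigl(\pr_1(z) =_E s(b')\bigr)
      \;\equiv\; \sm{e : E}\;\sm{p : r(e)=b}\;\sm{b' : B}\;\bigl(e =_E s(b')\bigr).
\]
I would then reassociate the $\Sigma$-type so that $b'$ and the path $e = s(b')$ sit immediately after $e$, and contract the singleton $\sm{e : E}(e = s(b'))$ at its center $e \defeq s(b')$, transporting the leftover component $p : r(e) = b$ along the way. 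This leaves
\[
    P \;\simeq\; \sm{b' : B}\bigl(r(s(b')) = b\bigr) \;\equiv\; \fibb{r\circ s}(b).
\]

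Next I would use the hypothesis $r \circ s = \idfunc{B}$: transporting the family $g \mapsto \fibb{g}(b)$ on the type $B \to B$ along this equality of functions produces an equivalence $\fibb{r\circ s}(b) \simeq \fibb{\idfunc{B}}(b) \equiv \sm{b' : B}(b' = b)$, and the right-hand side is the based path space at $b$, hence contractible with center $(b,\refl{b})$. Composing the equivalences exhibits $P$ as contractible; tracing the chain backwards, the center corresponds to $s(b)$, matching the intuition behind the name of the lemma (the "intersection" of the image of $s$ with $\fib{r}{b}$).

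I do not anticipate a genuine obstacle: the argument is bookkeeping with standard $\Sigma$-type manipulations (associativity, commutativity, and contraction of based path spaces, as in \cite[Chapter~3]{hottbook}). The one place needing care is tracking which later components depend on which variables when permuting factors and performing the singleton contraction, so that the transports are inserted correctly. If one prefers to avoid the explicit unfolding, the same identification $P \simeq \fibb{r\circ s}(b)$ follows at once from the pasting lemma for pullbacks: the square defining $P$ pastes with the pullback square exhibiting $\fib{r}{b}$ as the pullback of $\unit \xrightarrow{b} B \xleftarrow{r} E$, and the outer rectangle then has bottom edge $r \circ s$; contractibility of $P$ reduces to the fact that $\idfunc{B}$ has contractible fibers.
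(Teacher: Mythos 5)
Your proposal is correct and matches the paper's own proof, which likewise offers exactly these two alternatives: the paper states that the lemma ``can be proven by writing the pullback using sigma types, or by using the two pullback lemma,'' and then draws the very pasting diagram you describe (with $\unit \to \fib{r}{b} \to \unit$ on the left, $B \xrightarrow{s} E \xrightarrow{r} B$ on the right, and the outer rectangle a pullback because $r \circ s = \idfunc{B}$). Both your $\Sigma$-type contraction and your pasting-lemma argument are sound, and your bookkeeping concern about transports in the singleton contraction is the only subtlety, which you correctly identify.
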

    \begin{proof}
        This can be proven by writing the pullback using sigma types, or by using the two pullback lemma
        \citep[Lemma~4.1.11]{AKL} applied to the following diagram:
        \[
            \begin{tikzpicture}
              \matrix (m) [matrix of math nodes,row sep=1em,column sep=1em,minimum width=1em,nodes={text height=1.75ex,text depth=0.25ex}]
              { \unit & B \\
                \fib{r}{b} & E \\
                \unit & B \\};
              \path[-stealth]
                (m-1-1) edge node [above] {} (m-1-2)
                        edge node [left] {} (m-2-1)
                (m-2-1) edge node [above] {} (m-2-2)
                        edge node [left] {} (m-3-1)
                (m-3-1) edge node [above] {$b$} (m-3-2)
                (m-1-2) edge node [right]{$s$} (m-2-2)
                (m-2-2) edge node [right] {$r$} (m-3-2)
                ;
            \end{tikzpicture}\qedhere
            \]
    \end{proof}

\begin{lem}
    The map $R$ has a retraction, given by $F_\sbt : \EM_{\sbt\sbt}(A,n+1) \to \EM_\sbt(A,n+1)$, the map
    that forgets the second point. \qed
\end{lem}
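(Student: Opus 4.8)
The plan is to unwind the definitions of $R$ and $F_\sbt$ and observe that their composite is the identity, so that essentially nothing needs to be done. By construction, $R$ sends a pointed Eilenberg-Mac Lane space $(K,k) : \EM_\sbt(A,n+1)$ to the doubly-pointed space $(K,k,k)$ (it repeats the pointing), while $F_\sbt$ sends a doubly-pointed space $(K,p,q)$ to $(K,p)$ (it forgets the second point). Hence for any $(K,k) : \EM_\sbt(A,n+1)$ we get $F_\sbt(R(K,k)) \equiv F_\sbt(K,k,k) \equiv (K,k)$, with the carrier and the pointing agreeing definitionally.

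The one point that is not literally definitional is that the $(-1)$-truncated witness exhibiting $(K,k)$ as an element of $\EM_\sbt(A,n+1)$ is recovered after this round trip; but since such a witness inhabits a mere proposition, I would simply appeal to propositional truncation being a proposition. This yields $F_\sbt \circ R = \idfunc{\EM_\sbt(A,n+1)}$, which is exactly the assertion that $F_\sbt$ is a retraction of $R$. I do not expect any genuine obstacle here: the lemma is recorded only so that \cref{lemma:trivialintersection} can later be applied with $r \defeq F_\sbt$ and $s \defeq R$, and more generally so that $F_\sbt$ is available as a convenient splitting of $R$ in the analysis of principal fibrations that follows.
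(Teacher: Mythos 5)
Your argument is exactly the one the paper has in mind: the lemma is marked with a \qed precisely because $F_\sbt \circ R$ is the identity essentially by definition, and you correctly note that the only non-definitional piece, the $(-1)$-truncated witness of being an Eilenberg-Mac Lane space, lives in a mere proposition. Nothing to add.
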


From \cref{proposition:characterizationF}, it follows that $F$ has a retraction.

The following proposition is a strengthening of the main theorem in \citep{shulman}.
Instead of proving a logical equivalence, we provide an equivalence of types.
For this proposition, it is convenient to define the notion of unpointed associated action.
Similarly to \cref{definition:associatedaction}, given a $K(A,n)$-bundle $f : Y \to X$, we define
the \define{unpointed associated action} of $X$ on $A$ as the (unpointed) map $F_\sbt \circ \ceil{f} : X \to \EM_\sbt(A,n+1)$.

\begin{prop}[cf.~\citep{shulman}]
    Let $f : Y \to X$ be a $K(A,n)$-bundle.
    Then the following types are equivalent:
    \begin{enumerate}
        \item The type of homotopies between the unpointed associated action of $X$ on $A$ and the constant map at
            the pointing of $\EM_\sbt(A,n+1)$.
        \item The type of principal fibration structures on $f$.
    \end{enumerate}
\end{prop}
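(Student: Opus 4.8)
The plan is to unwind both types using the classification results established above and show they agree. Recall from \cref{remark:descriptionbundles} that a $K(A,n)$-bundle $f : Y \to X$ is classified by a map $\overline{f} : X \to \EM(A,n+1)$ together with two sections $s_1, s_2 : \prd{x:X}\overline{f}(x)$, and that $f$ is equivalent to $\pr_1 : \sm{x:X} s_1(x) =_{\overline{f}(x)} s_2(x) \to X$. The unpointed associated action is $F_\sbt \circ \ceil{f} : X \to \EM_\sbt(A,n+1)$, which is the pair $(\overline{f}, s_1)$ after forgetting the second section. So a homotopy between the unpointed associated action and the constant map at the pointing of $\EM_\sbt(A,n+1)$ is, unwinding the equality type in $\EM_\sbt(A,n+1) \simeq \sm{K:\UU_\sbt}\ttrunc{-1}{\cdots}$, precisely a pointed equivalence $\overline{f} \htpy \text{const}_{K(A,n+1)}$ as families of pointed types: that is, a family of pointed equivalences $\overline{f}(x) \simeq_\sbt K(A,n+1)$, which carries $s_1$ to the pointing. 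Type (1) is therefore equivalent to the type of ways of trivializing $\overline{f}$ compatibly with $s_1$.

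Next I would compute type (2). A principal fibration structure on $f$ is, by definition, a pointed-or-unpointed map $f' : X \to K(A,n+1)$ together with an identification of $f$ with $\fibb{f'} : Y \to X$. Using the classification of $K(A,n)$-bundles and the fact that a map $X \to K(A,n+1)$ together with the pullback-square data is the same as a map $X \to \EM_{\sbt\sbt}(A,n+1)$ whose first two sections become $\fibb{\text{--}}$ of a \emph{constant} classifying family — more precisely, $\fibb{f'}$ is classified by $s\circ\fibb{\fibb{f'}}$, and $\fibb{\fibb{f'}}(x) \equiv \fib{f'}{f'(x)}$, whose fiber over the base is canonically $\loopspacesym K(A,n+1)$ — I would identify the classifying data of a principal fibration as: the \emph{constant} family $X \to \EM(A,n+1)$ at $K(A,n+1)$ (via \cref{proposition:characterizationF}, $s(\text{const})$ has $\overline{(\cdot)} = \text{const}$), one section determined by $f'$, and the other section the pointing. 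So exhibiting a principal-fibration structure on $f$ amounts to exhibiting an equivalence of the classifying data of $f$ with classifying data of this shape, i.e. a trivialization of $\overline{f}$ respecting $s_1$ — matching type (1).

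To make this rigorous rather than hand-wavy, I would proceed as follows: apply \cref{corollary:classification} to reduce everything to statements about maps into $\EM_{\sbt\sbt}(A,n+1)$; observe that $\ceil{f}$ for a principal fibration $\fibb{f'}$ factors through $F_\sbt$-preimages that are constant in the $\EM(A,n+1)$-coordinate; and then use the retraction of $R$ by $F_\sbt$ (the previous lemma) together with \cref{lemma:trivialintersection}, with $r \defeq F_\sbt$ and $s \defeq R$, to identify the fiber of $F_\sbt$ over the pointing with a contractible space — making the choice of ``lift to $\EM_{\sbt\sbt}$ over a trivialized action'' essentially unique, so that the data of a principal structure collapses exactly to the data of the trivialization in (1). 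The main obstacle I expect is bookkeeping: carefully matching the \emph{pointedness} data on both sides — a principal fibration can be pointed or unpointed, and the pointed sections $s_1$ in the associated action versus the pointing of $K(A,n+1)$ must be tracked through \cref{proposition:characterizationF} and \cref{corollary:classification} without sign or basepoint slips. Once the dictionary ``$K(A,n)$-bundle $\leftrightarrow (\overline{f},s_1,s_2)$'' and ``principal structure $\leftrightarrow$ trivialization of $\overline{f}$ fixing $s_1$'' is set up precisely, the equivalence should be immediate, with \cref{lemma:trivialintersection} doing the work of showing no extra choices are hidden in the passage through $\EM_{\sbt\sbt}(A,n+1)$.
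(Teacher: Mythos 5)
Your overall plan — unwind both types through the classification in terms of $\EM_{\sbt\sbt}(A,n+1)$ — is the right starting point, and it parallels the paper: the paper observes the fiber sequence $K(A,n+1)\xrightarrow{i}\EM_{\sbt\sbt}(A,n+1)\xrightarrow{F_\sbt}\EM_\sbt(A,n+1)$, translates type~(1) into the type of factorizations of $\ceil{f}$ through $i$ (i.e.\ $\sm{f' : X \to K(A,n+1)} i\circ f' \sim \ceil{f}$), and then manipulates pullback squares using \cref{lemma:trivialintersection} and the two-pullback lemma until this visibly agrees with the definition of a principal-fibration structure.

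There is, however, a concrete error in your proposed use of \cref{lemma:trivialintersection}. You write that it lets you ``identify the fiber of $F_\sbt$ over the pointing with a contractible space,'' making the lift to $\EM_{\sbt\sbt}(A,n+1)$ essentially unique. That is not what the lemma says, and it cannot be true: the fiber of $F_\sbt : \EM_{\sbt\sbt}(A,n+1)\to\EM_\sbt(A,n+1)$ over the pointing is precisely $K(A,n+1)$ — this \emph{is} the fiber sequence at the heart of the whole argument. The lemma instead says that the \emph{pullback} of that fiber against the section $R$ is contractible (a ``trivial intersection'' statement), which the paper uses to paste in an extra pullback square — not to collapse the choice of lift. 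If the lift were unique, the map $f' : X \to K(A,n+1)$ in a principal structure would carry no information and type~(2) would degenerate to a proposition, which is clearly wrong. So as written this step of your sketch is not merely hand-wavy but incorrect, and repairing it by substituting the correct statement of the lemma would leave a hole where you expected contractibility to do the work.

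Two further cautions. First, be careful with the ordering of the two sections when matching $\ceil{\fibb{f'}}$ with $\ceil{f}$: the fiber inclusion $i$ lands at $(K(A,n+1),\ast,f'(x))$ (basepoint first), whereas naively computing $\fibb{\fib{f'}{\ast}}(x) = (f'(x)=\ast)$ and applying $s$ and the counit $c$ gives $(K(A,n+1),f'(x),\ast)$. These differ by a nontrivial (path-inversion) identification, and which one you track determines whether the unpointed associated action really lands on the constant map at the pointing or on $x\mapsto(K(A,n+1),f'(x))$. Second, the sketch that ``exhibiting a principal structure amounts to exhibiting an equivalence of the classifying data of $f$ with [the constant family, one section $f'$, the other the pointing]'' can be carried out directly — given $e(x):(\overline f(x),s_1(x))\simeq_\sbt(K(A,n+1),\ast)$, set $f'(x)\defeq e(x)(s_2(x))$ and promote $e$ to the required doubly-pointed identification by $\refl{}$; in the other direction, apply $F_\sbt$. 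This is a cleaner route than invoking \cref{lemma:trivialintersection} at all, and if you pursue your approach you should replace the erroneous appeal to that lemma with this explicit correspondence. But that is a genuinely different proof from the paper's, so the details — especially the basepoint-ordering issue above — need to be written out, not waved at.
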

\begin{proof}
    Observe that we have a fiber sequence
    \[
        K(A,n+1)\xrightarrow{i} \EM_{\sbt\sbt}(A,n+1) \xrightarrow{F_{\sbt}} \EM_\sbt(A,n+1),
    \]
    and thus, the first type in the statement is
    equivalent to the type of factorizations of $\ceil{f}$ through $i$.
    Concretely, this is the type $\sm{f' : X \to K(A,n+1)} i \circ f' \sim \ceil{f}$.

    Now, by \cref{remark:classificationanduniqueness} and \cref{proposition:characterizationF}, this last type is equivalent to the type of maps
    $f' : X \to K(A,n+1)$ together with a proof that $i \circ f'$ classifies $f$.
    That is, a pointed map $\psi : Y \to \EM_\sbt(A,n+1)$ together with a homotopy,
    making the following square commute and a pullback:
     \begin{center}
        \begin{tikzpicture}
          \matrix (m) [matrix of math nodes,row sep=2em,column sep=2em,minimum width=2em,nodes={text height=1.75ex,text depth=0.25ex}]
          { Y &  & \EM_\sbt(A,n+1) \\
            X & K(A,n+1) & \EM_{\sbt\sbt}(A,n+1). \\};
          \path[-stealth]
            (m-1-1) edge node [above] {$\psi$} (m-1-3)
                    edge node [left] {$f$} (m-2-1)
            (m-2-1) edge node [above] {$f'$} (m-2-2)
            (m-2-2) edge node [above] {$i$} (m-2-3)
            (m-1-3) edge node [right]{$R$} (m-2-3)
            ;
        \end{tikzpicture}
    \end{center}
    By \cref{lemma:trivialintersection}, and the universal property of pullbacks,
    this is equivalent to having maps $f' : X \to K(A,n+1)$,
    $\psi : Y \to \EM_\sbt(A,n+1)$, and $\phi : Y \to \unit$, homotopies $H$ and $J$,
    filling the left square and the top triangle respectively in the following diagram
     \begin{center}
        \begin{tikzpicture}
          \matrix (m) [matrix of math nodes,row sep=2em,column sep=2em,minimum width=2em,nodes={text height=1.75ex,text depth=0.25ex}]
          { Y & \unit & \EM_\sbt(A,n+1) \\
            X & K(A,n+1) & \EM_{\sbt\sbt}(A,n+1). \\};
          \path[-stealth]
            (m-1-1) edge node [below] {$\phi$} (m-1-2)
                    edge [bend left = 20] node [above] {$\psi$} (m-1-3)
                    edge node [left] {$f$} (m-2-1)
            (m-2-1) edge node [above] {$f'$} (m-2-2)
            (m-1-2) edge node [above] {} (m-1-3)
                    edge node [left] {} (m-2-2)
            (m-2-2) edge node [above] {$i$} (m-2-3)
            (m-1-3) edge node [right]{$R$} (m-2-3)
            ;
        \end{tikzpicture}
    \end{center}
    and a proof that the pasting of the two squares is a pullback.

    The type of maps $Y \to \unit$ is contractible, with center of contraction the canonical map $Y \to \unit$.
    Observe also that the pair $(J, \psi)$ inhabits a contractible type, since $J$ is witnessing the fact that $\psi$ is the composite
    of $\phi$ with the inclusion of the base point $\unit \to \EM_{\sbt}(A,n+1)$.
    So this last type is equivalent to the type of 
    maps $f' : X \to K(A,n+1)$ together with a homotopy $H$ making the left square commute,
    and a proof that the composite square is a pullback.
    Finally, by the two pullback lemma \citep[Lemma~4.1.11]{AKL}, this last type is equivalent to the type of
    proofs that $f$ is pointed principal, as required.
\end{proof}

A similar argument, using pointed maps, pointed homotopies, and \cref{lemma:actionisaction}, proves
an analogous result for pointed $K(A,n)$-bundles between connected types.

\begin{prop}
    \label{proposition:characterizationprincipalfibration}
    Let $f : Y \to X$ be a pointed $K(A,n)$-bundle between connected types.
    Then the following types are equivalent:
    \begin{enumerate}
        \item The type of trivializations of the associated action of $X$ on $A$.
        \item The type of principal fibration structures on $f$.\qed
    \end{enumerate}
\end{prop}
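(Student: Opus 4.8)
The plan is to run the proof of the preceding (unpointed) proposition essentially verbatim, replacing every map, homotopy and pullback by its pointed counterpart and invoking \cref{lemma:actionisaction} to handle the word ``trivialization''.

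First I would apply \cref{lemma:actionisaction}. Since $X$ is pointed and connected, that lemma identifies $\pi_1(X) \curvearrowright A$ with the type of pointed maps $X \to_\sbt \EM_\sbt(A,n+1)$ and sends the trivial action to the constant map at the pointing of $\EM_\sbt(A,n+1)$; hence the type of trivializations of the associated action of $X$ on $A$ (that is, identifications of it with the trivial action) is equivalent to the type of pointed homotopies between $F_\sbt \circ \ceil{f} : X \to_\sbt \EM_\sbt(A,n+1)$ and that constant map. Next I would use the fiber sequence $K(A,n+1) \xrightarrow{i} \EM_{\sbt\sbt}(A,n+1) \xrightarrow{F_{\sbt}} \EM_\sbt(A,n+1)$ already exhibited in the preceding proof: the type of pointed null-homotopies of $F_\sbt \circ \ceil{f}$ is equivalent to the type of pointed factorizations of $\ceil{f}$ through $i$, that is, to the type of pointed maps $f' : X \to_\sbt K(A,n+1)$ together with a pointed homotopy $i \circ f' \sim \ceil{f}$. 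Then, as in the unpointed argument, the pointed part of \cref{remark:classificationanduniqueness} together with \cref{proposition:characterizationF} rewrites this as the type of a pointed map $\psi : Y \to_\sbt \EM_\sbt(A,n+1)$ together with a pointed homotopy making the square with edges $\psi$, $f$, $i \circ f'$ and $R$ commute and exhibiting it as a pullback.

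The remaining steps are the pointed analogues of the unpointed proof. I would apply \cref{lemma:trivialintersection}, taking its $r$ to be $F_\sbt$ and its $s$ to be $R$ (legitimate since $F_\sbt$ is a retraction of $R$), together with the universal property of pullbacks, to factor that square through $\unit$: this produces a pointed map $\phi : Y \to_\sbt \unit$, pointed homotopies $H$ and $J$ filling the left-hand square and the top triangle, and a proof that the pasted square is a pullback. Then I would discard the redundant data: the type of pointed maps $Y \to_\sbt \unit$ is contractible, and the pair $(J,\psi)$ inhabits a contractible type because $J$ exhibits $\psi$ as the composite of $\phi$ with the inclusion of the base point $\unit \to \EM_\sbt(A,n+1)$. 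What remains is a pointed map $f' : X \to_\sbt K(A,n+1)$, a pointed homotopy $H$ making the left square commute, and a proof that the composite square is a pullback; by the two-pullback lemma (\cite[Lemma~4.1.11]{AKL}) this last condition is equivalent to the left square being a pullback, that is, to $f$ being the pointed fiber of $f'$ — i.e.\ to a principal fibration structure on $f$.

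The main obstacle I expect is bookkeeping rather than a genuinely new idea: one must propagate base points and basepoint-preservation data coherently through every reduction, and in particular re-check that the two contractibility arguments used to discard the auxiliary data ($\phi$ and the pair $(J,\psi)$) still go through in the pointed setting. A secondary point to get right is the very first reduction — confirming that ``trivialization of the associated action'' corresponds under \cref{lemma:actionisaction} precisely to a pointed null-homotopy of $F_\sbt \circ \ceil{f}$, and not to some propositionally truncated or merely unpointed variant.
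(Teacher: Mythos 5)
Your proposal is correct, and it takes exactly the approach the paper intends: the paper gives no separate proof of this proposition, merely remarking that ``a similar argument, using pointed maps, pointed homotopies, and \cref{lemma:actionisaction}, proves an analogous result,'' and your write-up is a faithful, step-by-step pointification of the unpointed argument with the right auxiliary lemmas in the right places.

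One small clarification that could tighten the write-up but does not affect correctness: by \cref{remark:principalinoneway} both sides of the desired equivalence are mere propositions once $X$ is connected (via \cref{lemma:actionisaction}), so even if one were nervous about coherently propagating pointing data through every contractibility reduction, it would suffice to exhibit maps in both directions; the type-level bookkeeping you flag as the ``main obstacle'' is therefore a non-issue here, and your first reduction (identifying trivializations of the associated action with pointed null-homotopies of $F_\sbt \circ \ceil{f}$) is exactly the content of \cref{lemma:actionisaction} restricted to the pointed connected case.
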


%\begin{rmk}\label{remark:principalinoneway}
%    Notice that in the pointed connected case, \cref{lemma:actionisaction} implies that both types in the previous statement are propositions.
%    So for a pointed $K(A,n)$-bundle between connected types, being principal is a mere property.
%\end{rmk}

% right below
\begin{cor}\label{remark:principalinoneway}
    The type of principal fibration structures on a pointed $K(A,n)$-bundle between
    connected types is a mere proposition.
\end{cor}
\begin{proof}
    This follows from \cref{proposition:characterizationprincipalfibration},
    noting that \cref{lemma:actionisaction} implies that the first of the two
    equivalent types in the statement
    of \cref{proposition:characterizationprincipalfibration} is a mere proposition.
\end{proof}

\subsection{Nilpotent groups}
\label{nilpgroups}

In this section we give a homotopical characterization of nilpotent groups that is familiar from classical homotopy theory.
Before doing this, recall, from \citep[Section~4.5]{BRV},
that short exact sequences of groups of the form $I \to G \to H$
correspond to pointed fiber sequences of Eilenberg--Mac Lane spaces of the form $K(I,1) \to K(G,1) \to K(H,1)$.
% right before

% right after
\begin{defn}\label{remark:actionofabelia}
Given a short exact sequence of groups $I \to G \to H$, where $I$ is abelian,
we define an action $H \curvearrowright I$ by applying
\cref{definition:associatedaction} together with \cref{lemma:actionisaction}
to the fiber sequence $K(I,1) \to K(G,1) \to K(H,1)$ corresponding to the short exact sequence.
\end{defn}

\begin{lem}\label{lemma:algebraicaction}
    Given a short exact sequence of groups $I \to G \xrightarrow{q} H$ with $I$ abelian,
    the action $G \curvearrowright I$ given in \cref{remark:actionofses} factors as $q$ composed
    with the action $H \curvearrowright I$ given in \cref{remark:actionofabelia}.
\end{lem}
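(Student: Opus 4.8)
The plan is to reduce both actions to group homomorphisms $G \to_\Grp \Aut(I)$ and check they coincide. Since $K(G,1)$ is connected and $\EM_\sbt(I,2)$ is $1$-truncated, a pointed map $K(G,1) \to_\sbt \EM_\sbt(I,2)$ is determined by its effect on $\loopspacesym$; so combining \cref{lemma:actionisaction} (with $X \equiv K(G,1)$), \cref{lemma:replacebyKG1}, and the equivalence $\loopspacesym \EM_\sbt(I,2) \simeq \Aut(I)$ of \cref{remark:equivalenceofEM}, an action $G \curvearrowright I$ is exactly a homomorphism $G \simeq \loopspacesym K(G,1) \to \loopspacesym \EM_\sbt(I,2) \simeq \Aut(I)$, and it suffices to identify the two such homomorphisms. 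Moreover, unwinding \cref{definition:associatedaction} with \cref{proposition:characterizationF} and \cref{proposition:functoriality}, the homomorphism $\pi_1(X) \to \Aut(I)$ attached to the associated action of a pointed $K(I,1)$-bundle $f : Y \to X$ sends a loop to the automorphism it induces on $\pi_1(\fib{f}{\ast}) \simeq I$ by transporting the family $\fib{f}{\blank}$ around it; here the abelianness of $I$ is what lets this make sense (it is already needed in \cref{remark:actionofabelia} for $\EM_\sbt(I,2)$ to exist), since only for $I$ abelian does an unpointed self-equivalence of $K(I,1)$ induce a well-defined automorphism of its fundamental group.

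Apply this with $f \equiv K(q,1) : K(G,1) \to K(H,1)$, which is the pointed $K(I,1)$-bundle whose associated action is the $H \curvearrowright I$ of \cref{remark:actionofabelia} (see \cite[Section~4.4]{BRV}). Composing this action with $q$ corresponds to precomposing $F_\sbt \circ \ceil{K(q,1)}$ with $K(q,1)$, equivalently to passing to the pullback of $K(q,1)$ along itself, $\pr_2 : K(G,1) \times_{K(H,1)} K(G,1) \to K(G,1)$, pointed by the diagonal section (\cref{remark:classificationanduniqueness}). Hence the homomorphism $G \to \Aut(I)$ coming from ``$q$ composed with $H \curvearrowright I$'' sends a loop $g : \loopspacesym K(G,1)$ to the automorphism that transporting $\fib{K(q,1)}{\blank}$ around the loop $\ap{K(q,1)}(g) = q(g)$ induces on $\pi_1(\fib{K(q,1)}{\ast})$. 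Writing $\fib{K(q,1)}{y} \equiv \sm{x : K(G,1)} K(q,1)(x) = y$, this transport is the unpointed self-equivalence $T : (x,p) \mapsto (x, p \sq q(g))$ of $\fib{K(q,1)}{\ast}$.

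It remains to compute what $T$ does on fundamental groups. Under the identification $\fib{K(q,1)}{\ast} \simeq K(I,1)$ from \cite[Section~4.4]{BRV} --- for which the basepoint is $(\ast, \refl{\ast})$ and $\pi_1$ is identified with $\Ker(q) \subseteq G$ by taking first components of loops --- the map $T$ is the identity on first components and moves the basepoint to $(\ast, q(g))$; the path back to the basepoint with first component the loop $g$ (which exists since $q(g) = q(g)$, and which is the one supplied by the diagonal section above), when used to conjugate, turns the induced automorphism of $\pi_1(\fib{K(q,1)}{\ast}) \simeq \Ker(q)$ into $\ell \mapsto g^{-1} \ell g$. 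This is the action of \cref{remark:actionofses}, as desired. The delicate part is precisely this last bookkeeping: $T$ is only an unpointed self-equivalence of the fiber, so invoking ``the automorphism it induces on $\pi_1$'' leans on the abelianness of $I$, and one must check that the natural basepoint-path gives exactly the formula $g \cdot (l,r) = (g^{-1} l g, \phi_r)$ rather than its inverse --- getting this variance right, consistently with the group-operation conventions on $\loopspacesym K(G,1)$, is where most of the care goes.
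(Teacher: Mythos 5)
Your proposal is correct and the overall architecture matches the paper's: both proofs reduce the statement to comparing group homomorphisms $G \to \Aut(I)$ (equivalently, the effect of the two $K(G,1)\to_\sbt\EM_\sbt(I,2)$'s on loops), both identify the associated action of $K(q,1)$ with transport of the fiber family, and both arrive at $l \mapsto g^{-1}lg$. Where you diverge is in \emph{how} the final formula is certified. The paper avoids the convention-tracking you flag as "the delicate part" by a generalization trick: it replaces the loop $g : \loopspacesym K(G,1)$ with a free endpoint $x : K(G,1)$ and a path $p : \ast = x$, so that both the transport-and-conjugate automorphism and the algebraic conjugation become families over $(x,p)$ that can be compared by path induction on $p$, reducing to the trivial case $p \equiv \refl{\ast}$ where both sides are visibly the identity. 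Your version instead computes the transport self-equivalence $T : (x,p)\mapsto(x, p\sq q(g))$ of $\fib{K(q,1)}{\ast}$ explicitly and conjugates by a chosen basepoint-path. That works, and abelianness of $I$ does make the choice of basepoint-path immaterial, but the justification that the automorphism of $I$ coming from $\Omega^2$ of the pointed self-equivalence $\ap{s}(\ap{\fibb{K(q,1)}}(q(g)))$ of $(\ttrunc{2}{\susp\fib{K(q,1)}{\ast}}, |\north|)$ \emph{is} the conjugated transport automorphism leans on \cref{proposition:functoriality} plus the Freudenthal identification of \cref{lemma:freudKGn} in a way you only gesture at; this is precisely the bookkeeping the paper's path-induction step is designed to avoid. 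So: same reduction, but the paper buys a cleaner proof of the key identity at the cost of a small generalization, while your proof buys concreteness at the cost of having to track the pointings and conjugation direction by hand.
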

\begin{proof}
    We have equivalences $G \simeq \loopspacesym K(G,1)$, and $H \simeq \loopspacesym K(H,1)$, such that
    multiplication maps to composition of loops. For readability, we will avoid using the equivalences explicitly.
    Assume given a pointed map $f : K(G,1) \to K(H,1)$ that represents $q$.
    Let us recall the setting of \cref{remark:actionofses}.
    Let $I\equiv \fib{\loopspacesym f}{\refl{\ast}}$, and fix $g : G$.
    We then have a map $I \to I$ given by $(l, r) \mapsto (g^{-1} \sq l \sq g, \phi_r)$,
    where $\phi_r$ is the straightforward proof that $g^{-1} \sq l \sq g$ maps to $\refl{\ast}$ under $\loopspacesym f$.
    Our goal is to show that using the action from \cref{remark:actionofabelia}, $\loopspacesym f(g)$ acts as the above map.
    To do this, let us recall the construction from \cref{remark:actionofabelia}.
    We start by considering the composite in \cref{definition:associatedaction}:
    \[
        K(H,1) \to \EM(I,1) \xrightarrow{\sim} \EM_{\sbt\sbt}(I,2) \to \EM_{\sbt}(I,2).
    \]
    Together with $f$, this gives us a map $K(G,1) \to \EM_{\sbt}(I,2)$ that sends
    $\ast$ to $\ttrunc{2}{\suspsym \fib{f}{\ast}}$.
    By looping this map, we get a map $\alpha : G \to \left(\ttrunc{2}{\suspsym \fib{f}{\ast}} =_{\UU_{\sbt}} \ttrunc{2}{\suspsym \fib{f}{\ast}}\right)$.
    The action $G \curvearrowright I$ is obtained by using \cref{remark:equivalenceofEM},
    and recalling that we have
    \[
        \loopspacesym^2 \ttrunc{2}{\suspsym \fib{f}{\ast}} \simeq \loopspacesym \ttrunc{1}{\loopspacesym \suspsym \fib{f}{\ast}}
        \simeq \loopspacesym \fib{f}{\ast} \simeq \fib{\loopspacesym f}{\refl{\ast}} \equiv I
    \]
    This gets us the map $\beta : G \to (I \to I)$ which we want to prove is homotopic to $(l, r) \mapsto (g^{-1} \sq l \sq g, \phi_r)$.

    Now, to prove this, we generalize and use path induction.
    Instead of considering a loop $g : \loopspacesym K(G,1)$, we take a point $x : K(G,1)$ and a path $p : \ast = x$.
    Generalizing the construction from \cref{remark:actionofabelia} (using $\ap{f}$ instead of $\loopspace f$) we get a map
    $(\ast = x) \to (\fib{\ap{f}}{\refl{\ast}} \to \fib{\ap{f}}{\refl{f(x)}})$,
    and it is now enough to show that this map is homotopic to $p \mapsto \big((c,r) \mapsto (p^{-1} \sq c \sq p, \psi_r)\big)$.
    But this is easily proven by path induction, since we only have to verify it in the case where $x \equiv \ast$ and $p \equiv \refl{\ast}$. 
    In that case, on the one hand, the map $(c,r) \mapsto (p^{-1} \sq c \sq p, \psi_r)$ is just the identity on $I$.
    On the other hand, $\alpha$, being the looping of a pointed map, maps $\refl{\ast}$ to $\refl{\ttrunc{2}{\suspsym \fib{f}{\ast}}}$.
    So $\beta$ maps $\refl{\ast}$ to $\idfunc{I}$, since for $K,K' : \EM_{\sbt}(A,2)$, the equivalence $(K =_{\UU_\sbt} K') \simeq (\loopspacesym^2 K =_{\Grp} \loopspacesym^2 K')$
    is given by $\ap{\loopspacesym^2}$.
\end{proof}

\begin{cor}\label{corollary:centralisprincipal}
Using the correspondence between short exact sequences and pointed fiber sequences of Eilenberg--Mac Lane spaces,
a short exact sequence is a \define{central extension} if and only if its corresponding fiber sequence is
a \define{principal fibration}.
\end{cor}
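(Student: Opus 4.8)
The plan is to unwind both conditions in terms of the data of \cref{remark:descriptionbundles} and then apply \cref{proposition:characterizationprincipalfibration} together with \cref{lemma:algebraicaction}. Start from a short exact sequence $I \to G \xrightarrow{q} H$ with $I$ abelian, and let $K(I,1) \to K(G,1) \xrightarrow{f} K(H,1)$ be the corresponding pointed fiber sequence of Eilenberg--Mac Lane spaces, so that $f$ represents $q$ and $K(G,1) \to K(H,1)$ is a pointed $K(I,1)$-bundle between connected types (note $K(I,1)$ is a $K(A,1)$-bundle in the sense required, since $I$ is abelian). By \cref{proposition:characterizationprincipalfibration}, this bundle is a principal fibration precisely when its associated action of $K(H,1)$ on $I$ --- in the sense of \cref{definition:associatedaction} --- is trivial, and by \cref{lemma:actionisaction} this is the same as saying the action $H \curvearrowright I$ obtained via \cref{remark:actionofabelia} is trivial.

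Next I would connect this to the group-theoretic side. The definition of \textbf{central extension} asks that the action $G \curvearrowright I$ of \cref{remark:actionofses} be trivial. The key input is \cref{lemma:algebraicaction}, which says that this action $G \curvearrowright I$ factors as $q : G \twoheadrightarrow H$ followed by the action $H \curvearrowright I$ of \cref{remark:actionofabelia}. Since $q$ is surjective (an epimorphism of groups), precomposing with $q$ reflects triviality: the composite $G \curvearrowright I$ is trivial if and only if $H \curvearrowright I$ is trivial. Indeed, for every $h : H$ there merely exists $g : G$ with $q(g) = h$, and since being trivial is a mere proposition we may use this merely-existing preimage; then $h \cdot x = q(g)\cdot x = g \cdot x$ under the factorization, so the $G$-action being trivial forces the $H$-action to be trivial, and the converse direction is immediate.

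Combining the two equivalences gives the corollary: the fiber sequence is a principal fibration $\iff$ the action $H \curvearrowright I$ of \cref{remark:actionofabelia} is trivial $\iff$ the action $G \curvearrowright I$ of \cref{remark:actionofses} is trivial $\iff$ the short exact sequence is a central extension. I expect the main subtlety to be purely bookkeeping: making sure that the action appearing in \cref{proposition:characterizationprincipalfibration} (defined through the classifying map $\ceil{f}$ and $F_\sbt$) is literally the same action as the one in \cref{remark:actionofabelia}, and that \cref{lemma:algebraicaction} is being applied with matching conventions for which group acts and in which direction. Since all the relevant types of trivializations are propositions by \cref{remark:principalinoneway}, it suffices to prove the logical equivalence, so no coherence data needs to be tracked beyond what \cref{lemma:algebraicaction} already supplies.
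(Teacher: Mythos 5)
Your proposal is correct and follows essentially the same route as the paper: both directions go through \cref{lemma:algebraicaction} to factor the $G$-action as $q$ followed by the $H$-action, combine this with \cref{proposition:characterizationprincipalfibration} (which the paper invokes implicitly), and use surjectivity of $q$ together with the mere-proposition nature of triviality to transfer triviality between the two actions. The "bookkeeping subtlety" you flag is in fact handled automatically, since \cref{remark:actionofabelia} is by definition the action of \cref{definition:associatedaction} applied to the Eilenberg-Mac Lane fiber sequence, so there is nothing to match up.
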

    The statement of the corollary is only a logical equivalence and a priori not an equivalence of types. 
    But notice that being a central extension is a mere proposition,
    and that \cref{remark:principalinoneway} implies that, in this case, being principal is a mere proposition too.
    So the statement is actually an equivalence of types.
\begin{proof}
    Fix a short exact sequence $I \to G \xrightarrow{q} H$. On the one hand, if it corresponds to a principal fibration,
    then $I$ is abelian and the action of \cref{remark:actionofabelia} is trivial,
    so the action of \cref{remark:actionofses} must be too, by \cref{lemma:algebraicaction}.
    On the other hand, since the fact that an element $h : H$ acts trivially as a map $I \to I$ is a mere proposition,
    and since $q$ is surjective, we can assume that $h = q(g)$ for some $g : G$.
    But if we assume that the action of \cref{remark:actionofses} is trivial, then $g$ acts trivially, and, by
    \cref{lemma:algebraicaction}, $h$ must act trivially too.
\end{proof}

\begin{prop}\label{proposition:characterizationnilpgroup}
    Given a group $G$, the following types are equivalent:
    \begin{enumerate}
        \item The type of nilpotent structures on $G$.
        \item The type of factorizations of the map $K(G,1) \to \unit$ as a finite composite of
            pointed principal fibrations involving pointed, connected $1$-types.
    \end{enumerate}
\end{prop}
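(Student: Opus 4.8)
The plan is to reduce (2) to the description of nilpotent structures furnished by \cref{lemma:injectionssurjections}: the type of nilpotent structures on $G$ is equivalent to the type of finite sequences of group epimorphisms $G \equiv G_0' \twoheadrightarrow \cdots \twoheadrightarrow G_k' \equiv \unit$ such that each short exact sequence $\Ker(\psi_i) \to G_i' \twoheadrightarrow G_{i+1}'$, with $\psi_i$ the $i$-th map, is a central extension. So it suffices to produce an equivalence between the type in (2) and this type of sequences.

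First I would recall that $G \mapsto K(G,1)$ is an equivalence from $\Grp$ onto the type of pointed connected $1$-types, with inverse $X \mapsto \pi_1(X)$: the round-trip identifications are $\pi_1 K(G,1) \cong G$ and $K(\pi_1 X, 1) \simeq X$ (the latter being \cref{lemma:replacebyKG1} applied to a $1$-type, whose $1$-truncation unit is an equivalence), and pointed maps between pointed connected $1$-types are identified with group morphisms by \cite[Theorem~4]{BRV}. Consequently the data underlying a factorization in (2) — a chain $K(G,1) \equiv Y_0 \to \cdots \to Y_k \equiv \unit$ of pointed maps between pointed connected $1$-types, together with a homotopy from the composite to the canonical map $K(G,1) \to \unit$ — is equivalent to a sequence of groups $G \equiv G_0', \dots, G_k' \equiv \unit$ together with group morphisms $\psi_i : G_i' \to_\Grp G_{i+1}'$; the homotopy contributes nothing, as the type of maps $K(G,1) \to \unit$ is contractible. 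Note also that each of the principal fibrations must be a $K(-,1)$-bundle, since a fibre $K(A,n)$ with $n \geq 2$ is a $1$-type only when $A$ is trivial, in which case it is an equivalence and hence also a $K(-,1)$-bundle.

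The heart of the argument will be to show that, for a pointed map $f : K(H,1) \to_\sbt K(H',1)$ corresponding to $\psi : H \to_\Grp H'$, the type of pointed principal fibration structures on $f$ is equivalent to the product of a proof that $\psi$ is surjective with a proof that $\Ker(\psi) \to H \to H'$ is a central extension. I would argue in two steps. First, unwinding the definition, a pointed $K(-,1)$-bundle structure on $f$ amounts to an abelian group $A$ together with a pointed equivalence $K(A,1) \simeq_\sbt \fib{f}{\ast}$ onto the fibre of $f$ over the basepoint: the clause that all fibres of $f$ be merely $K(A,1)$ follows from this together with the connectedness of $K(H',1)$, and the pointed section $K(A,1) \to_\sbt Y$ together with the pointed-pullback witness is precisely the datum of such an equivalence. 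Now $\fib{f}{\ast}$ is a pointed $1$-type, connected exactly when $\psi$ is surjective, with fundamental group $\Ker(\psi)$; so having a pointed $K(-,1)$-bundle structure on $f$ forces $\psi$ to be surjective and $\Ker(\psi)$ to be abelian, and conversely when these hold the correspondence between short exact sequences and pointed fibre sequences (\cite[Section~4.4]{BRV}) presents $\fib{f}{\ast}$ as $K(\Ker(\psi),1)$, so that — using that pointed equivalences $K(A,1) \simeq_\sbt K(B,1)$ are group isomorphisms $A \cong B$ (by the previous paragraph) — the type of such structures is the contractible type of pairs of an abelian group $A$ and an isomorphism $A \cong \Ker(\psi)$. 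Second, with such a structure fixed, being a pointed principal fibration is a mere property by \cref{remark:principalinoneway}, and by \cref{corollary:centralisprincipal} it holds precisely when $\Ker(\psi) \to H \to H'$ is a central extension. Combining, and using that a central extension structure already witnesses $\Ker(\psi)$ abelian, the type of pointed principal fibration structures on $f$ is equivalent to the product of a proof that $\psi$ is surjective and a proof that $\Ker(\psi) \to H \to H'$ is a central extension.

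Assembling these observations, the type in (2) becomes the type of sequences of groups $G \equiv G_0', \dots, G_k' \equiv \unit$ equipped, for each $i$, with a group morphism $\psi_i : G_i' \to_\Grp G_{i+1}'$, a proof that $\psi_i$ is surjective, and a proof that $\Ker(\psi_i) \to G_i' \to G_{i+1}'$ is a central extension — which is exactly the type of \cref{lemma:injectionssurjections}, hence equivalent to the type of nilpotent structures on $G$. I expect the main obstacle to be the third paragraph: one must verify carefully that the choices hidden in the definition of a pointed $K(A,n)$-bundle (the abelian group $A$ and the pointed identification of the basepoint fibre) assemble into contractible data, so that a bundle structure collapses to the bare surjectivity of $\psi$, and that ``being a principal fibration'' then reduces to the central-extension condition via \cref{corollary:centralisprincipal} precisely because both are mere propositions in this setting.
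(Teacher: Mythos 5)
Your proof is correct and takes essentially the same route as the paper's: both reduce statement (2) to the epimorphism-chain description of nilpotent structures from \cref{lemma:injectionssurjections}, using \cref{corollary:centralisprincipal} (central extensions $\leftrightarrow$ principal fibrations) together with \cref{remark:principalinoneway} to translate ``pointed principal fibration structure'' into ``surjective with central-extension kernel.'' The paper's proof is a one-liner that leaves all of this implicit; you have simply spelled out the intermediate steps — the $\Grp \simeq$ (pointed connected $1$-types) dictionary, the observation that $n \geq 2$ forces the fibre group to be trivial, the contractibility of the auxiliary bundle data — which the paper treats as routine.
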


\begin{proof}
    By the correspondence between central extensions and principal fibrations
    (\cref{corollary:centralisprincipal}) the statement is equivalent to \cref{lemma:injectionssurjections}.
\end{proof}

\subsection{Nilpotent types}
\label{nilpspaces}

This section characterizes nilpotent types in terms of the maps in their Postnikov towers,
as is familiar from classical homotopy theory.

\begin{defn}
    A \define{nilpotent structure} on a pointed, connected type $X$ is given by
    a nilpotent structure for $\pi_1(X)$ and, for each $n>1$, a nilpotent structure for
    the action $\pi_1(X) \curvearrowright \pi_n(X)$.
\end{defn}

Notice that a connected type is merely pointed, and thus it makes sense to ask whether a
connected type merely has a nilpotent structure.

\begin{defn}
    A \define{nilpotent type} is a connected type such that it merely has a nilpotent structure.
\end{defn}

\begin{eg}
    All simply connected types are nilpotent.
\end{eg}

\begin{eg}
    \label{example:truncofnilp}
    The truncation of a nilpotent type is nilpotent.
\end{eg}

We can prove many facts about truncated nilpotent types by inducting over the nilpotency degree.

\begin{defn}
    Given $n\geq 1$ and a pointed, connected, $n$-truncated type $Y$ together with a nilpotent structure, its \define{nilpotency degree}
    is the sum, over $i<n$, of the lengths of the factorizations of the maps $\ttrunc{i+1}{Y} \to \ttrunc{i}{Y}$,
    given by the nilpotent structure.
\end{defn}

Our next goal is to prove the key property of a nilpotent type, namely that all of the maps in its Postnikov
tower factor as finite composites of principal fibrations.
In order to do this, we need to transfer $K(-,n)$-bundles along maps of actions.

Given a map of $\pi_1(X)$-actions $m : \phi \to \psi$, and a $K(A,n)$-bundle over $\phi$,
we can construct a bundle over $\psi$ as follows.

\begin{defn}\label{definition:transferred}
    Let $f : Y \to X$ be a bundle classified by $\overline{f} : X \to \EM(A,n+1)$ and $s_1, s_2 : \prod_{(x:X)}f(x)$.
    Given an action $\psi \equiv (\overline{f}',s_1')$ and a map of actions
    \[
        m : \prd{x:X} (\overline{f}(x),s_1(x)) \to_{\sbt} (\overline{f}'(x),s_1'(x)),
    \]
    the \define{bundle transferred along $m$}, denoted by $m_*(f)$, is the bundle classified by
    $\overline{f}'$, $s_1'$, and $s_2' \defeq \lambda x. m(x)(s_2(x))$.
\end{defn}

Notice that if $f$ is a pointed bundle and $\psi$ is a pointed action, then the transferred bundle is canonically pointed.

% right after
\begin{defn}
    \label{remark:functorialityinducedbundle}
    Let $f : Y \to X$ be a (pointed) $K(-,n)$-bundle.
    We define a (pointed) fiberwise map $f \to m_*(f)$.
Under the identification of \cref{remark:descriptionbundles}, this is the map
\[
    \left(\sm{x : X} s_1(x) = s_2(x)\right) \to \left(\sm{x : X} s_1'(x) = s_2'(x)\right)
\]
    given by mapping $(x,p)$ to $(x,r^{-1} \sq \ap{m(x)}(p))$, where $r : m(s_1(x)) = s_1'(x)$
    is the proof that $m$ is pointed.
\end{defn}

The transferred bundle construction has a nice interpretation in terms of cohomology with local coefficients.

\begin{rmk}
Recall from \citep{synthcoh} that $K(A,n)$ represents the \define{$n$-th reduced cohomology group
with coefficients in $A$}. That is, for a pointed type $X$, one defines
\[
    \tilde{H}^{n}(X;A) \defeq \ttrunc{0}{X\to_\sbt K(A,n)}.
\]

More generally, the map $F_\sbt : \EM_{\sbt\sbt}(A,n+1) \to \EM_\sbt(A,n+1)$ represents the \define{$n$-th reduced cohomology group with local coefficients}.
In this setting, a \define{local system} is given by a pointed map $c : X \to_{\sbt} \EM_\sbt(A,n+1)$
(which, by \cref{lemma:actionisaction}, corresponds to an action of $\pi_1(X)$ on $A$, when $X$ is connected), and the
$n$-th reduced cohomology group of $X$ with coefficients in $c$ is defined by
\[
    \tilde{H}^{n}(X;c) \defeq \ttrunc{0}{\text{\LARGE $\Pi$}_{x:X}^\sbt c(x)},
\]
where $\prod^\sbt$ is the pointed Pi-type.

Under the above interpretation, the transferred bundle construction gives a map $m : \tilde{H}^*(X,\phi) \to \tilde{H}^*(X,\psi)$
for every map of actions $m : \phi\to\psi$. This is the functoriality of cohomology with local coefficients 
with respect to the coefficients variable.
\end{rmk}

%right below

\begin{defn}\label{remark:fibseqfibers}
Given composable maps $Y \xrightarrow{f} Y' \xrightarrow{g} Y''$
and $y' : Y'$, we define a fiber sequence
\[
    \fib{f}{y'} \to \fib{g\circ f}{g(y')} \to \fib{g}{g(y')},
\]
where we are considering the fiber over $(y', \refl{}) : \fib{g}{g(y')}$.
To construct this fiber sequence,
we take the fibers of all the maps in the bottom right square of the following diagram:
    \begin{center}
        \begin{tikzpicture}
          \matrix (m) [matrix of math nodes,row sep=1.5em,column sep=1em,minimum width=2em,nodes={text height=1.75ex,text depth=0.25ex}]
            {  & \fib{g\circ f}{g(y')} & \fib{g}{g(y')}\\
            \fib{f}{y'} &  Y & Y' \\
            \unit & Y'' & Y'', \\};
          \path[-stealth]
            (m-2-1) edge node [above] {} (m-2-2)
                    edge node [left] {} (m-3-1)
            (m-1-2) edge node [right]{} (m-2-2)
                    edge node [above] {} (m-1-3)
            (m-2-2) edge node [left] {$g\circ f$} (m-3-2)
                    edge node [above] {$f$} (m-2-3)
            (m-3-1) edge node [left] {} (m-3-2)
            (m-3-2) edge [double equal sign distance,-] node [left] {} (m-3-3)
            (m-1-3) edge node [left] {} (m-2-3)
            (m-2-3) edge node [left] {$g$} (m-3-3)
            ;
        \end{tikzpicture}
    \end{center}
and then use the commutativity of limits to get an equivalence between the fiber of
the map $\fib{g\circ f}{g(y')} \to \fib{g}{g(y')}$ and $\fib{f}{y'}$.
\end{defn}

\begin{lem}\label{lemma:compositionKnbundles}
    For every $n\geq 1$, pointed $K(-,n)$-bundles are closed under composition.
\end{lem}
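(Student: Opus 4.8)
The plan is to use the hands-on description of $K(-,n)$-bundles from \cref{remark:descriptionbundles}, which says that a pointed $K(A,n)$-bundle over $X$ is the same data as a pointed map $\overline{f} : X \to_\sbt \EM(A,n+1)$ together with two pointed sections $s_1, s_2 : \prod^\sbt_{x:X} \overline{f}(x)$, with the total space being $\sm{x:X} s_1(x) =_{\overline{f}(x)} s_2(x)$ and the bundle map being $\pr_1$. So suppose we are given a pointed $K(A,n)$-bundle $f : Y \to_\sbt X$ and a pointed $K(B,n)$-bundle $g : Z \to_\sbt Y$. First I would observe that $g$ is classified over $Y$ by a pointed map $\overline{g} : Y \to_\sbt \EM(B,n+1)$ and pointed sections $t_1, t_2$. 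The key move is to \emph{push this data forward along $f$}: since $Y \simeq \sm{x:X} s_1(x) = s_2(x)$ via $\pr_1$ into $X$, the fiber of $f$ over $x$ is the path type $s_1(x) =_{\overline{f}(x)} s_2(x)$, which is merely $K(A,n)$; so for each $x:X$ I would consider the restriction of $(\overline{g}, t_1, t_2)$ to this fiber. What I want to produce is, for each $x:X$, a pointed $K(A,n)$-bundle over $\fib{f}{x}$ whose total space is the fiber of $g\circ f$ over $x$ (using \cref{remark:fibseqfibers} to identify $\fib{g\circ f}{x}$ with the total space of the restricted bundle $\fib{g}{-}$ over $\fib{f}{x}$).

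The heart of the argument is a \emph{stability/delooping} step. The composite $g \circ f : Z \to_\sbt X$ has, over each $x:X$, fiber $\fib{g\circ f}{x}$, which sits in a fiber sequence $\fib{g}{\star} \to \fib{g\circ f}{x} \to \fib{f}{x}$ with $\fib{g}{\star}\simeq K(B,n)$ and $\fib{f}{x}\simeq K(A,n)$ (the latter merely). I want to show this total space is again merely $K(C,n)$ for some abelian group $C$ (or an arbitrary group when $n=1$), and that over the base point the identification can be made pointed. Here I would invoke the classification from \cref{corollary:classification}: a pointed $K(B,n)$-bundle over $\fib{f}{x} \simeq K(A,n)$ corresponds to a pointed map $K(A,n) \to_\sbt \EM_{\sbt\sbt}(B,n+1)$, and I would argue that the total space of such a bundle is again an Eilenberg--Mac Lane space of the appropriate dimension — a fibration of $n$-dimensional EM spaces over an $n$-dimensional EM space has $n$-dimensional EM total space. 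Concretely, for $n\geq 2$ everything is an infinite loop space in the stable range and one checks connectivity and truncation levels directly; for $n=1$ one uses that an extension of groups by groups is a group, i.e. the total space is $K(C,1)$ with $C$ the corresponding group extension. Thus $g\circ f$ is a $K(C,n)$-bundle; picking the basepoint fiber gives the required pointed structure.

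I expect the main obstacle to be the $n=1$ case and the bookkeeping of \emph{pointedness} throughout. For $n\geq 2$ the stable-range delooping machinery (\cref{lemma:freudKGn}, \cref{proposition:characterizationF}) makes ``an EM-bundle over an EM-base has EM total space'' essentially formal via connectivity and truncation estimates. For $n=1$, however, the total space $\fib{g\circ f}{\star}$ is the homotopy-theoretic realization of a (possibly nonabelian) extension of $\pi_1$-groups, and showing it is a $1$-type that is $K(C,1)$ requires care: one uses the long exact sequence of the fiber sequence to see it is $0$-connected and has vanishing $\pi_k$ for $k\geq 2$, hence is a $K(C,1)$ with $C\defeq \pi_1$ of the total space. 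The pointed refinement (producing an identification $\fib{g\circ f}{\star} \simeq_\sbt K(C,1)$, not merely an unpointed equivalence) comes from the fact that all the types involved are pointed and connected, so by \cref{lemma:replacebyKG1} the equivalence can be taken to be pointed. Assembling these fiberwise statements into a single classifying map $X \to \EM_{\sbt\sbt}(C,n+1)$ and then reading off via \cref{corollary:classification} that $g\circ f$ is a pointed $K(C,n)$-bundle finishes the proof.
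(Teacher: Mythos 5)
Your argument lands on the same essential fact as the paper's proof: the fiber of the composite sits in a fiber sequence of the form $K(B,n)\to\fib{f\circ g}{\star}\to K(A,n)$ (obtained from \cref{remark:fibseqfibers}), whence the total space is $(n-1)$-connected and $n$-truncated, hence an Eilenberg--Mac Lane space $K(C,n)$ with $C$ an extension read off the long exact sequence of homotopy groups. However, the route you take to get there is considerably longer than necessary, and contains a couple of missteps worth flagging.

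The detour through \cref{remark:descriptionbundles}, the pushforward of classifying data, and reassembly via \cref{corollary:classification} is not needed: being a $K(-,n)$-bundle is defined fiberwise, so to verify the conclusion you only have to examine the fiber of the composite over the base point, which is exactly what the paper does. No classifying map has to be rebuilt from fiberwise pieces. Your split between $n=1$ and $n\geq 2$ is also a red herring. The connectivity/truncation estimate is completely uniform in $n\geq 1$: fiber and base are both $(n-1)$-connected and $n$-truncated, so the total space is too (truncation is closed under $\Sigma$, and connectivity follows from the long exact sequence), and one then takes $C:=\pi_n$ of the total space. In particular the stable-range machinery (\cref{lemma:freudKGn}, \cref{proposition:characterizationF}) you invoke plays no role in this lemma; it is used for the $K(-,n)$-bundle classification story elsewhere, not here. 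Finally, note two small slips: with your conventions $f:Y\to X$, $g:Z\to Y$, the composite is $f\circ g$, not $g\circ f$; and the bundle you get over $\fib{f}{x}\simeq K(A,n)$ is a $K(B,n)$-bundle, not a $K(A,n)$-bundle. Neither is fatal, but both would have to be fixed in a write-up.
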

\begin{proof}
    Assume given composable pointed maps $f : Y \to Y'$, $g : Y' \to Y''$ such that $f$ is a pointed $K(A,n)$-bundle and $g$ is a pointed $K(B,n)$-bundle.
    Let $y :Y$, $y' : Y'$, $y'' : Y''$ be the pointings.
    The fiber sequence of \cref{remark:fibseqfibers}, in this case, is equivalent to a fiber sequence of the following form:
    \[
        K(A,n) \to \fib{g\circ f}{y''} \to K(B,n).
    \]
    In particular $\fib{g\circ f}{y''}$ is a pointed, $(n-1)$-connected, $n$-truncated type.
    So it is an Eilenberg--Mac Lane space.
    By looking at the long exact sequence of homotopy groups, we see that
    $g \circ f$ is in fact a $K(C,n)$-bundle, with $C$ an extension of $B$ by $A$.
\end{proof}

\begin{lem}\label{lemma:surjectiveaction}
    Let $X$ be a pointed, connected type, and
    assume given pointed $K(-,n)$-bundles $f : Y \to X$, $f' : Y' \to X$, $g : Y \to Y'$, such that $f = f' \circ g$
    as pointed maps.
    Then the induced map of actions $t_g : (\overline{f}, s_1) \to (\overline{f'},s'_1)$
    is surjective (\cref{definition:mapofactionsdef}).
\end{lem}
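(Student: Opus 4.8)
The plan is to reduce the claimed surjectivity of the map of actions $t_g$ to an exactness statement for the long exact sequence of a fiber sequence of Eilenberg--Mac Lane spaces, exploiting the fact that $g$ is itself a pointed $K(-,n)$-bundle.

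First I would identify the underlying group homomorphism of $t_g$. Let $A$, $B$, $C$ be such that $f$, $f'$, $g$ are pointed $K(A,n)$-, $K(B,n)$-, $K(C,n)$-bundles, and let $x_0 : X$, $y_0' : Y'$ be the base points, so that $f'(y_0') = x_0$. By the construction of $t_g$ in \cref{proposition:functorialitycharacterizationbundles} (as used in \cref{lemma:functorialityaction}), together with the explicit description of bundle data in \cref{remark:descriptionbundles}, $t_g$ is assembled fiberwise from the maps $\ttrunc{n+1}{\suspsym \bar g_x}$ of \cref{proposition:functoriality}, where $\bar g_x : \fib{f}{x} \to \fib{f'}{x}$ is the restriction of $g$ to fibers over $x$. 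Tracing this through the stability equivalence $\EM_\sbt(A,n) \simeq \EM_\sbt(A,n+1)$ provided by \cref{proposition:characterizationF} and \cref{lemma:freudKGn}, and through the pointed equivalences $\fib{f}{x_0} \simeq K(A,n)$, $\fib{f'}{x_0} \simeq K(B,n)$ coming from the pointed bundle structures, the underlying homomorphism of $t_g$ is identified with $\pi_n(\bar g_{x_0}) : A \to B$.

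Next I would produce the fiber sequence. Applying \cref{remark:fibseqfibers} to the composable maps $Y \xrightarrow{g} Y' \xrightarrow{f'} X$ at the point $y_0'$ yields a fiber sequence
\[
    \fib{g}{y_0'} \to \fib{f}{x_0} \to \fib{f'}{x_0}
\]
whose second map is exactly $\bar g_{x_0}$. Since $g$ is a \emph{pointed} $K(C,n)$-bundle, $\fib{g}{y_0'} \simeq K(C,n)$ as pointed types, which is $(n-1)$-connected; hence $\pi_{n-1}(\fib{g}{y_0'})$ is trivial (for $n=1$ this just says $\fib{g}{y_0'}$ is connected). The long exact sequence of homotopy groups of this fiber sequence contains
\[
    \pi_n(\fib{f}{x_0}) \xrightarrow{\pi_n(\bar g_{x_0})} \pi_n(\fib{f'}{x_0}) \to \pi_{n-1}(\fib{g}{y_0'}),
\]
and vanishing of the last group forces $\pi_n(\bar g_{x_0})$ to be surjective by exactness. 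By the identification of the previous paragraph this map is the underlying homomorphism $A \to B$ of $t_g$, so $t_g$ is surjective as a map of actions.

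The step I expect to be the main obstacle is the first one: carefully checking that the underlying homomorphism of $t_g$ is indeed $\pi_n$ of the fiberwise restriction of $g$ over $x_0$. Each ingredient is easy, but the identification must be carried out coherently, threading \cref{remark:descriptionbundles}, \cref{proposition:functoriality}, \cref{proposition:functorialitycharacterizationbundles}, the definition of the associated action, and the stability equivalence of \cref{proposition:characterizationF}, while keeping track of the two pointings and of the degree shift between $n$ and $n+1$. Once that is settled, the remaining argument is the formal long-exact-sequence computation above, which works uniformly for all $n \geq 1$ because it uses only that the fiber $K(C,n)$ is $(n-1)$-connected (so the abelian-versus-general distinction at $n=1$ plays no role).
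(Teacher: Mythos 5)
Your proof is correct and follows essentially the same route as the paper's: both identify the underlying homomorphism of $t_g$ with the map $\fib{f}{x_0}\to\fib{f'}{x_0}$ in the fiber sequence $\fib{g}{y_0'}\to\fib{f}{x_0}\to\fib{f'}{x_0}$ obtained from \cref{remark:fibseqfibers}, and then read off surjectivity on $\pi_n$ from the fact that all three fibers are Eilenberg--Mac Lane spaces in degree $n$. The only stylistic difference is that you spell out the long-exact-sequence step (vanishing of $\pi_{n-1}$ of the fiber of $g$) where the paper simply observes the fiber sequence "corresponds to a short exact sequence of groups" after looping $n$ times; the paper is also briefer about the identification of $t_g(x_0)$ with the fiberwise restriction of $g$, deferring to \cref{lemma:functorialityaction}, whereas you correctly flag that tracing this through \cref{proposition:functorialitycharacterizationbundles} and \cref{proposition:functoriality} is the place where one must be careful.
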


\begin{proof}
    Let $x_0 : X$ and $y'_0 : Y'$ be the base points.
    Notice that by the construction of the induced action (\cref{lemma:functorialityaction})
    the map $t_g(x_0) : \overline{f}(x_0) \to \overline{f'}(x_0)$ is equivalent to the second map in the
    following fiber sequence:
\[
    \fib{g}{y'_0} \to \fib{f}{x_0} \to \fib{f'}{x_0}.
\]
    By hypothesis, $\fib{g}{y'_0} \simeq K(A,n)$, $\fib{f}{x_0} \simeq K(C,n)$, and
    $\fib{f'}{x_0} \simeq K(B,n)$.
    This fiber sequence corresponds to a short exact sequence of groups,
    so after looping $n$ times, $t_g(x_0) : \overline{f}(x_0) \to \overline{f'}(x_0)$
    is a surjective map of groups.
\end{proof}

\begin{lem}\label{lemma:ifactionscoincide}
    Assume given commutative triangles of pointed $K(-,n)$-bundles
    \[
        \begin{tikzpicture}
          \matrix (m) [matrix of math nodes,row sep=2em,column sep=2em,minimum width=2em,nodes={text height=1.75ex,text depth=0.25ex}]
            { Y &   & Y' \\
              & X &  \\};
          \path[-stealth]
            (m-1-1) edge node [above] {$g$} (m-1-3)
                    edge node [below] {$f\,\,\,\,$} (m-2-2)
            (m-1-3) edge node [below] {$\,\,\,\,f'$} (m-2-2)
            ;
        \end{tikzpicture}
        \begin{tikzpicture}
          \matrix (m) [matrix of math nodes,row sep=2em,column sep=2em,minimum width=2em,nodes={text height=1.75ex,text depth=0.25ex}]
            { Y &   & Y'' \\
              & X, &  \\};
          \path[-stealth]
            (m-1-1) edge node [above] {$g'$} (m-1-3)
                    edge node [below] {$f\,\,\,\,$} (m-2-2)
            (m-1-3) edge node [below] {$\,\,\,\,f''$} (m-2-2)
            ;
        \end{tikzpicture}
    \]
    and an equivalence of actions $e : (\overline{f'}, s'_1) \simeq (\overline{f''}, s''_1)$,
    such that $e \circ t_g = t_g'$ as maps of actions.
    Then we have a pointed equivalence $h : Y \simeq Y'$ such that
    $h \circ g = g'$ and $f'' \circ h = f'$ as pointed maps.
\end{lem}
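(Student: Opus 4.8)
The plan is to run everything through the classification of bundles by classifying data (\cref{remark:descriptionbundles}), and to recognize that the triangle $f = f' \circ g$ exhibits $f'$ as the bundle obtained from $f$ by transferring along $t_g$, in the sense of \cref{definition:transferred}. (Note that the statement's ``$h : Y \simeq Y'$'' must read $h : Y' \simeq Y''$, as forced by the equations $h\circ g = g'$ and $f''\circ h = f'$.)

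First I would unpack the data. By \cref{remark:descriptionbundles} present $f$, $f'$, $f''$ by classifying data $(\overline{f},s_1,s_2)$, $(\overline{f'},s_1',s_2')$, $(\overline{f''},s_1'',s_2'')$, with total spaces $\sm{x:X}s_1(x)=_{\overline{f}(x)}s_2(x)$ and analogously for the others, the structure maps being the first projections. Applying \cref{proposition:functorialitycharacterizationbundles} to $g$ (using $f = f'\circ g$) produces a fiberwise doubly-pointed map $t_g : \prd{x:X}(\overline{f}(x),s_1(x),s_2(x))\to_{\sbt\sbt}(\overline{f'}(x),s_1'(x),s_2'(x))$ together with an identification, over $X$, of $g$ with the map $(x,p)\mapsto(x,\ap{t_g(x)}(p))$ corrected by the two pointing paths of $t_g(x)$. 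Forgetting the second pointing of $t_g$ recovers, via \cref{lemma:functorialityaction}, the map of actions $t_g$ that appears in the statement; likewise $g'$ yields $t_{g'}$ together with its pointing data.

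The key observation is then that the second-pointing datum of $t_g$ is precisely a fiberwise identification $s_2'(x) = t_g(x)(s_2(x))$, so by \cref{definition:transferred} the bundle $f'$ is canonically equivalent, over $X$, to the transferred bundle $(t_g)_\ast(f)$, and under this identification $g$ corresponds to the canonical fiberwise map $f\to(t_g)_\ast(f)$ of \cref{remark:functorialityinducedbundle}. The same applies to $f''$, $g'$ and $(t_{g'})_\ast(f)$. The hypothesis that $e$ is an equivalence of actions with $e\circ t_g = t_{g'}$ now lets us build an equivalence over $X$ between $(t_g)_\ast(f)$ and $(t_{g'})_\ast(f)$: on classifying data it is $e$ on the base families, the pointing path of $e$ on the first section, and the per-$x$ path obtained by evaluating the identification $e\circ t_g = t_{g'}$ at $s_2(x)$ on the second section. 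Conjugating the two canonical maps $f\to(t_g)_\ast(f)$ and $f\to(t_{g'})_\ast(f)$ by this equivalence and by the two identifications above produces $h : Y'\simeq Y''$; it is automatically over $X$, so $f''\circ h = f'$, it is pointed since $e$ is pointed as a map of actions, and $h\circ g = g'$ because $g$ and $g'$ correspond to the two canonical maps, which our equivalence matches.

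The main obstacle, and essentially the only point requiring care, is the coherence bookkeeping: extracting from \cref{proposition:functorialitycharacterizationbundles} not merely the maps $t_g$, $t_{g'}$ but also the identifications of $g$, $g'$ with the explicit maps on sigma types; keeping track of all the pointing paths (the pointing of $e$, the two pointings each of $t_g$ and $t_{g'}$, and the per-$x$ path coming from $e\circ t_g = t_{g'}$); and checking that they assemble so that $h\circ g = g'$ holds as pointed maps. This amounts to a routine but somewhat lengthy manipulation of path concatenations and of $\ap{}$ applied to composites, and is where one must be most careful.
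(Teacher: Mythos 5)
Your proposal is correct and rests on the same key observation as the paper's own (very short) proof: that the equivalence of actions $e$ automatically matches \emph{both} sections, $e(x)(s_i'(x)) = s_i''(x)$, because $e\circ t_g = t_{g'}$ while $t_g$ and $t_{g'}$ are doubly pointed; together with \cref{proposition:functorialitycharacterizationbundles} this yields the equivalence of total spaces over $X$. Your detour through \cref{definition:transferred} and \cref{remark:functorialityinducedbundle} makes explicit what the paper compresses into one invocation of \cref{proposition:functorialitycharacterizationbundles}, and you correctly identify the typo $h : Y \simeq Y'$ (which should be $h : Y' \simeq Y''$, and which persists uncorrected in the paper's proof).
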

\begin{proof}
    Notice that $e(x)(s'_i(x)) = s''_i(x)$, for $i$ either $1$ or $2$.
    This is because on the one hand $e(x) \circ t_g(x) = t_g'(x)$ as maps,
    and on the other hand, $t_g$ and $t_g'$ are doubly pointed, so
    we have equalities $s'_i(x) = t_g(x)(s_i(x))$ and $s''_i(x) = t_g'(x)(s_i(x))$.
    \cref{proposition:functorialitycharacterizationbundles} then implies that $Y$ and
    $Y'$ are equivalent, with the required compatibility.
\end{proof}

\begin{lem}\label{lemma:surjectivemapofbundles}
    Let $X,\,Y,\,Y'$ be pointed, connected types, and $f : Y \to X$, $f' : Y' \to X$ be pointed $K(-,n)$-bundles.
    Given a map $g : Y \to Y'$ such that $f = g \circ f'$, if the action $(\overline{f}, s_1) \to_\act (\overline{f}', s'_1)$
    induced by $g$ is surjective, then $g$ is a $K(-,n)$-bundle, and its associated action is equivalent to $f'$ composed with
    $\ker\left((\overline{f}, s_1) \to_\act (\overline{f}', s'_1)\right)$.
\end{lem}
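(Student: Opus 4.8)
The plan is to prove the two assertions separately: that $g$ is a $K(-,n)$-bundle, from an analysis of the fibre of $g$ using the factorisation $f = f' \circ g$; and the identification of the associated action, by factoring $g$ through the pullback of $f$ along $f'$ and invoking \cref{lemma:functorialityaction}. Throughout, write $A$, $B$ for the coefficient groups of $f$, $f'$, let $q : A \to B$ be the group morphism underlying the map of actions $t_g : (\overline f, s_1) \to_\act (\overline{f}', s_1')$ produced by \cref{lemma:functorialityaction}, so that $q$ is surjective by hypothesis, and set $C := \ker q$. Note that $g$ is pointed, since $f = f' \circ g$ as pointed maps.

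For the first assertion, apply \cref{remark:fibseqfibers} to $Y \xrightarrow{g} Y' \xrightarrow{f'} X$: for each $y' : Y'$ this gives a fibre sequence $\fib{g}{y'} \to \fib{f}{f'(y')} \to \fib{f'}{f'(y')}$. Over the base point $y'_0$ the last two terms are $K(A,n)$ and $K(B,n)$ as pointed types, and, as in the proof of \cref{lemma:surjectiveaction}, the middle arrow induces $q$ on $\pi_n$. Since $q$ is surjective and $n \geq 1$, the long exact sequence of homotopy groups shows that $\fib{g}{y'_0}$ is a pointed, $(n-1)$-connected, $n$-truncated type (the $n$-truncatedness because it is a fibre of a map between $n$-truncated types) with $\pi_n \cong C$; hence, exactly as in the proof of \cref{lemma:compositionKnbundles}, it is an Eilenberg-Mac Lane space, necessarily $K(C,n)$. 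Because $\ttrunc{-1}{\fib{g}{y'} \simeq K(C,n)}$ is a proposition that holds at $y'_0$ and $Y'$ is connected, it holds for every $y'$, so $g$ is a pointed $K(C,n)$-bundle.

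For the second assertion, let $\alpha := (\overline f, s_1)$ be the associated action of $f$ and let $\gamma := \ker(t_g)$ be the subaction of $\alpha$ whose underlying group is $C = \ker q$. Form the pullback $P := Y \times_X Y'$ with its projection $p : P \to Y'$; this is again a pointed $K(A,n)$-bundle, and since $\fibb{p} = \fibb{f} \circ f'$ one has $\ceil{p} = \ceil{f} \circ f'$, so the associated action of $p$ is $\alpha$ precomposed with $f'$, i.e. the pullback action $\pi_1(f')^*\alpha$ along $\pi_1(f') : \pi_1(Y') \to \pi_1(X)$ — here one uses that the equivalence of \cref{lemma:actionisaction} carries precomposition with $f'$ to precomposition with $\pi_1(f')$. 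The map $j : Y \to P$, $y \mapsto (y, g(y))$ (using $f(y) = f'(g(y))$), is pointed and satisfies $p \circ j = g$ as pointed maps, so \cref{lemma:functorialityaction} applied to this factorisation yields a map of $\pi_1(Y')$-actions $t_j : (\overline g, s_1^g) \to_\act \pi_1(f')^*\alpha$. Repeating the fibre-sequence analysis of the previous paragraph for $Y \xrightarrow{j} P \xrightarrow{p} Y'$, and using $\fib{p}{y'_0} \simeq \fib{f}{x_0}$, one finds that the group morphism underlying $t_j$ is the map $C = \pi_n\fib{g}{y'_0} \to \pi_n\fib{f}{x_0} = A$ induced by the first arrow of $\fib{g}{y'_0} \to \fib{f}{x_0} \to \fib{f'}{x_0}$; by exactness of the long exact sequence this is precisely the inclusion $C = \ker q \hookrightarrow A$. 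Hence $t_j$ exhibits the associated action of $g$ as the subaction of $\pi_1(f')^*\alpha$ on $\ker q$, and since restricting to a subaction commutes with pulling back along $\pi_1(f')$, this subaction equals $\pi_1(f')^*(\alpha|_{\ker q}) = \pi_1(f')^*\gamma$, which is $f'$ composed with $\ker(t_g)$, as required.

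The step I expect to be most delicate is the one in the last paragraph identifying the morphism underlying $t_j$ with the kernel inclusion: this requires carefully matching the fibre sequence that \cref{remark:fibseqfibers} produces for $g = p \circ j$ with the one used for $f = f' \circ g$, and it also rests on the routine but fiddly compatibility between the equivalence of \cref{lemma:actionisaction} and precomposition, which is what makes the statement ``the associated action of $p$ is $\pi_1(f')^*\alpha$'' literal. The remaining steps are direct applications of the cited results.
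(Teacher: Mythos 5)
Your proof is correct, but it takes a genuinely different route from the paper's. The paper leans on the explicit $\Sigma$-type description of bundles set up in \cref{remark:descriptionbundles} and \cref{remark:functorialityinducedbundle}: after replacing $g$ by the equivalent map $(\idfunc{}, \lambda x.\ap{t_g(x)})$ between the $\Sigma$-type presentations of $Y$ and $Y'$, a general formula for the fibers of such a map shows that $\fib{g}{(x,p)}$ is a path type in $\fib{t_g(x)}{s_1'(x)}$; surjectivity of the action makes $\fib{t_g(x)}{s_1'(x)}$ an $n$-connected, $(n+1)$-truncated type, whence its path types are $(n-1)$-connected and $n$-truncated, i.e.\ Eilenberg--Mac Lane spaces. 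The associated action of $g$ is then read off \emph{immediately} from the same explicit description: $(x,p) \mapsto (\fib{t_g(x)}{s_1'(x)},(s_1(x),\refl{s_1'(x)}))$, which is literally $\ker(t_g)$ evaluated at $x$, composed with the projection to $X$, and the projection is $f'$ under the identification $Y' \simeq \sm{x:X}s_1'(x)=s_2'(x)$. Both claims fall out of one computation over $X$, with no auxiliary constructions.

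You instead argue "classically": for the first claim you use \cref{remark:fibseqfibers} together with the long exact sequence and connectedness of $Y'$; for the second you introduce the pullback $P = Y \times_X Y'$, factor $g = p \circ j$, identify the action of $p$ as $\pi_1(f')^*\alpha$, apply \cref{lemma:functorialityaction} to $j$, and then identify the underlying morphism of $t_j$ with the kernel inclusion by matching fiber sequences. This is a sound plan, and the delicate points you flag are indeed the ones needing care: (i) compatibility of the second arrow of the $(g,p,j)$-fiber sequence with the first arrow of the $(f,f',g)$-fiber sequence after the equivalence $\fib{p}{y_0'} \simeq \fib{f}{x_0}$, and (ii) the routine but nontrivial compatibility between \cref{lemma:actionisaction}, precomposition by $f'$, and the kernel/subaction constructions, so that the sentence "restricting to a subaction commutes with pulling back" can be cashed out on the geometric side. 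Your route buys familiarity (LES, pullbacks, standard exactness bookkeeping) at the cost of length and several compatibility checks; the paper's route buys brevity by exploiting the explicit presentation that \cref{proposition:characterizationF} was designed to provide, avoiding the pullback $P$, the LES, and the translation to algebraic actions altogether. It is worth internalizing the paper's version, since the explicit $\Sigma$-description is the workhorse of this whole section and makes both parts of the lemma a one-line observation once it is set up.
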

\begin{proof}
    Using \cref{remark:functorialityinducedbundle}, we replace $g$ by an equivalent map
    \[
     (\idfunc{}, \lambda x.\ap{t_g(x)}) : \left(\sm{x :X} s_1(x) =_{\overline{f}(x)} s_2(x)\right)  \to \left(\sm{x :X} s'_1(x) =_{\overline{f'}(x)} s'_2(x)\right).
    \]
    Now, in general, whenever we have a map $\eta : U \to V$ between types, and points $u_1, u_2 : U$,
    we have an equivalence
    \[
        (u_1 = u_2) \simeq \left(\sm{p : \eta(u_1) = \eta(u_2)} (u_1, \refl{\eta(u_1)}) =_{\fib{\eta}{\eta(u_1)}} \transfib{\fib{\eta}{-}}{p^{-1}}{(u_2, \refl{\eta(u_2)})}\right)
    \]
    such that a path $z$ gets mapped to a pair with $\ap{\eta}(z)$ as its first coordinate.
    This implies that the fiber of $g$ is equivalent to
    \[
        (s_1(x), \refl{s'_1(x)}) =_{\fib{t_g(x)}{s'_1(x)}} \transfib{\fib{t_g(x)}{-}}{p^{-1}}{(s_2(x), \refl{s_2(x)})}.
    \]
    Since there is a fiber sequence $\fib{t_g(x)}{s'_1(x)} \to \overline{f}(x) \xrightarrow{t_g} \overline{f'}(x)$,
    if the action induced by $g$ is surjective, $\fib{t_g(x)}{s'_1(x)}$ must be $(n+1)$-truncated and $n$-connected.
    This means that $g$ is a $K(-,n)$-bundle, since its fibers are path spaces of an $(n+1)$-dimensional
    Eilenberg--Mac Lane space. This proves the first statement.

    By the very description of $(\idfunc{}, \lambda x.\ap{t_g(x)})$ as a $K(-,n)$-bundle, we see that its action is given by
    mapping $(x,p)$ to the pointed type $(\fib{t_g(x)}{s'_1(x)}, (s_1(x), \refl{s'_1(x)}))$,
    which is precisely $\ker\left((\overline{f}, s_1) \to_\act (\overline{f}', s'_1)\right)$ applied to $x$.
    To finish the proof, notice that under the equivalence $Y' \simeq\left(\sm{x :X} s'_1(x) =_{\overline{f'}(x)} s'_2(x)\right)$,
    $f'$ corresponds to the projection $(x,p) \mapsto p$.
\end{proof}

\begin{prop}\label{proposition:structuretofactorization}
    Let $f : Y \to X$ be a $K(A,n)$-bundle between pointed, connected types living over an action $\phi : X \to_\sbt \EM_\sbt(A,n+1)$.
    Given a nilpotent structure on $\phi$ there is a factorization of $f$ as a finite composite of principal fibrations.
\end{prop}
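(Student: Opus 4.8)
The plan is to induct on the length $k$ of the nilpotent structure on $\phi$. Recall that a nilpotent structure on $\phi$ consists of maps of actions
\[
    \unit \equiv \alpha_0 \lhd_\act \alpha_1 \lhd_\act \cdots \lhd_\act \alpha_k \equiv \phi,
\]
all normal in $\phi$, whose underlying group morphisms form a central series and whose successive quotients $\alpha_{i+1}/\alpha_i$ are trivial actions. The base case $k=0$ is immediate: then $\phi$ is the trivial action, so $f$ lives over the trivial action, and by \cref{proposition:characterizationprincipalfibration} $f$ is itself a (pointed) principal fibration, hence a composite of length one.

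For the inductive step, suppose $\phi$ has a nilpotent structure of length $k \geq 1$. The idea is to peel off the top quotient $\alpha_k/\alpha_{k-1}$. First I would transfer $f$: since $\alpha_{k-1} \lhd_\act \alpha_k \equiv \phi$, the inclusion gives a map of actions, but what I actually want is the quotient map $\phi = \alpha_k \twoheadrightarrow \alpha_k/\alpha_{k-1}$ viewed as a map of actions $\psi \defeq \alpha_k/\alpha_{k-1}$ (using \cref{lemma:injectionssurjections} to rephrase the central series in terms of surjections, so that we get a quotient map of actions $m : \phi \to \psi$ with $\psi$ trivial). Transferring $f$ along $m$ via \cref{definition:transferred} produces a $K(A,n)$-bundle $m_*(f) : Z \to X$ living over $\psi$. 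Since $\psi$ is the trivial action (the quotient $\alpha_k/\alpha_{k-1}$ is trivial by the definition of nilpotent structure on an action), \cref{proposition:characterizationprincipalfibration} says $m_*(f)$ is a principal fibration.

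Now by \cref{remark:functorialityinducedbundle} we have an induced fiberwise pointed map $g : Y \to Z$ over $X$, i.e. $f = m_*(f) \circ g$. The map of actions $Y$ induces between the classifying data of $f$ and of $m_*(f)$ is, up to the identification of \cref{remark:descriptionbundles}, essentially the quotient map $m$ itself, which is surjective. Hence by \cref{lemma:surjectivemapofbundles}, $g$ is a $K(-,n)$-bundle, and its associated action is $f'' \defeq m_*(f)$ composed with $\ker(m) = \ker(\phi \to \psi)$, which is the action $\alpha_{k-1}$; more precisely, $g$ (as a bundle over $Z$) lives over the pullback of $\alpha_{k-1}$ along $m_*(f) : Z \to X$, which carries a nilpotent structure of length $k-1$ inherited from $\alpha_0 \lhd_\act \cdots \lhd_\act \alpha_{k-1}$ by pulling back each stage along $Z \to X$ (pulling back along a map preserves normal subactions, central series, and triviality of quotients, since all of these are detected fiberwise by the group-theoretic data). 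Applying the inductive hypothesis to $g : Y \to Z$ with this nilpotent structure of length $k-1$, we get a factorization of $g$ as a finite composite of principal fibrations. Composing with the single principal fibration $m_*(f) : Z \to X$ — and using \cref{lemma:compositionKnbundles} to know the composite is again a bundle — yields the desired factorization of $f = m_*(f) \circ g$.

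The main obstacle I anticipate is bookkeeping the nilpotent structure along the transfer/pullback: I need to verify carefully that the truncated tower $\alpha_0 \lhd_\act \cdots \lhd_\act \alpha_{k-1}$ really does pull back to a nilpotent structure on the action that $g$ lives over, i.e. that $\ker(\phi \to \psi)$ equipped with the restricted filtration satisfies all three conditions (normal inclusions, central series underlying, trivial quotients), and that these conditions are stable under the pullback $Z \to X$ coming from $m_*(f)$. This is where one genuinely uses that the $\alpha_i$ are all normal in $\phi$ (not merely in $\alpha_{i+1}$), so that the standard group-theoretic fact about $H'/H \to G/H \to G/H'$ applies and the quotient of the pulled-back tower computes correctly. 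The compatibility between the "associated action" of $g$ furnished by \cref{lemma:surjectivemapofbundles} and the pulled-back $\alpha_{k-1}$ should be a direct unwinding of \cref{remark:functorialityinducedbundle} and the explicit kernel description there, but it needs to be stated precisely enough that the inductive hypothesis applies verbatim.
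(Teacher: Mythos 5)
Your proposal is correct and follows essentially the same route as the paper's proof: in both, one quotients $\phi$ to the trivial action $\phi/\phi_{k-1}$, transfers the bundle along that quotient via \cref{definition:transferred} to obtain a principal fibration $Z \to X$ (the paper's $f_{k-1} : Y_{k-1} \to X$), observes via \cref{lemma:surjectivemapofbundles} that the induced map $Y \to Z$ is a $K(-,n)$-bundle over the pulled-back $\phi_{k-1}$, and recurses. The bookkeeping you flag (that pulling the truncated filtration back along $Z \to X$ yields a nilpotent structure of length $k-1$) is indeed the step the paper leaves implicit with "we can now proceed inductively," and your analysis of why it works is sound; the only small slip is that $m_*(f)$ is a $K(A/H_{k-1},n)$-bundle rather than a $K(A,n)$-bundle.
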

\begin{proof}
    Let $\unit$ denote the trivial $\pi_1(X)$-action on the trivial group.
    Given a filtration of normal subactions
    \[
        \unit \equiv \phi_0 \lhd \cdots \lhd \phi_k \equiv \phi
    \]
    all of them normal in $\phi$, we can construct a sequence of maps of actions
    \[
        \phi \simeq \phi'_0 \to \cdots \to \phi'_k \simeq \unit
    \]
    that induce surjective maps after looping $n$ times.
    Analogously to \cref{proposition:characterizationnilpgroup}, we get this by defining $\phi'_i \defeq \phi/\phi_i$.
    We can then apply \cref{definition:transferred} to the quotient map $\phi \to \phi'_{k-1}$, to get a factorization
    \[
        \begin{tikzpicture}
          \matrix (m) [matrix of math nodes,row sep=2em,column sep=2em,minimum width=2em,nodes={text height=1.75ex,text depth=0.25ex}]
          { Y &   & Y_{k-1} \\
              & X &  \\};
          \path[-stealth]
            (m-1-1) edge node [above] {$f^{(1)}$} (m-1-3)
                    edge node [below] {$f\,\,\,\,$} (m-2-2)
            (m-1-3) edge node [below] {$\,\,\,\,f_{k-1}$} (m-2-2)
            ;
        \end{tikzpicture}
    \]
    such that the associated action of $f_{k-1}$ is $\phi'_{k-1}$.
    Moreover, by \cref{lemma:surjectivemapofbundles}, $f^{(1)}$ is a $K(-,n)$-bundle, and its associated action is $\phi_{k-1} \circ f_{k-1}$.

    By hypothesis, the action associated to $f_{k-1}$ is trivial, and thus $f_{k-1}$ is a principal fibration by
    \cref{proposition:characterizationprincipalfibration}.
    Notice that composing each $\phi_i$ with $f_{k-1}$, we get a filtration of the action $\phi_{k-1} \circ f_{k-1}$
    \[
        \unit \equiv \phi_0 \circ f_k \lhd \cdots \lhd \phi_{k-1} \circ f_{k-1}.
    \]
    We can now proceed inductively, by applying the same argument to $f^{(1)}$.
    In the end, we get a factorization of $f$ as a sequence of principal fibrations
    \[
        Y \to Y_1 \to \cdots \to Y_{k-1} \to X,
    \]
    as needed.
\end{proof}

\begin{prop}\label{proposition:factorizationtostructure}
    Let $f : Y \to X$ be a $K(A,n)$-bundle between pointed, connected types living over an action $\phi : X \to_\sbt \EM_\sbt(A,n+1)$.
    Given a factorization of $f$ as a finite composite of principal fibrations, there is a nilpotent structure on $\phi$.
\end{prop}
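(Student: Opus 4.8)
The plan is to reverse the construction of \cref{proposition:structuretofactorization}. Write the given factorization as $Y \equiv Y_0 \xrightarrow{g_1} Y_1 \xrightarrow{g_2} \cdots \xrightarrow{g_k} Y_k \equiv X$; in line with \cref{proposition:structuretofactorization} (and with \cref{proposition:characterizationnilpgroup}) we take each $g_i$ to be a pointed principal $K(-,n)$-bundle between pointed, connected types. Put $f_i \defeq g_k \circ \cdots \circ g_{i+1} : Y_i \to X$ and $e_i \defeq g_i \circ \cdots \circ g_1 : Y \to Y_i$, so that $f_0 \equiv f$, $f_k \equiv \idfunc{X}$, $f_i \circ e_i = f$ and $f_i = f_{i+1} \circ g_{i+1}$; by \cref{lemma:compositionKnbundles} every $f_i$ and every $e_i$ is again a pointed $K(-,n)$-bundle. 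Let $\psi_i$ be the $\pi_1(X)$-action associated to $f_i$, so that $\psi_0 = \phi$ and $\psi_k = \unit$.

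First I would construct the filtration. By \cref{lemma:functorialityaction} the maps $e_i$ and $g_{i+1}$ induce maps of actions $t_{e_i} : \phi \to_\act \psi_i$ and $t_{g_{i+1}} : \psi_i \to_\act \psi_{i+1}$, and one verifies $t_{e_{i+1}} = t_{g_{i+1}} \circ t_{e_i}$; by \cref{lemma:surjectiveaction} all of these maps are surjective. Define $\phi_i \defeq \ker(t_{e_i})$, which carries a natural $\pi_1(X)$-action making it a normal subaction of $\phi$ (as in \cref{remark:actionofses}). Since $t_{e_{i+1}}$ factors through $t_{e_i}$ we get $\phi_i \subseteq \phi_{i+1}$ (indeed $\phi_{i+1} = t_{e_i}^{-1}(\ker t_{g_{i+1}})$), and because $A$ is abelian every subgroup inclusion is normal and every extension of subquotients central, so $\unit \equiv \phi_0 \lhd_\act \phi_1 \lhd_\act \cdots \lhd_\act \phi_k \equiv \phi$ is automatically a chain of normal subactions, all normal in $\phi$, whose underlying sequence of groups is a central series for $A$.

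The remaining and essential point is that each quotient action $\phi_{i+1}/\phi_i$ is trivial, and here the hypothesis that $g_{i+1}$ is principal does the work. Since $t_{e_i}$ is a surjective map of $\pi_1(X)$-actions with kernel $\phi_i$ and $\phi_{i+1} = t_{e_i}^{-1}(\ker t_{g_{i+1}})$, the map $t_{e_i}$ descends to an equivariant equivalence $\phi_{i+1}/\phi_i \simeq \ker(t_{g_{i+1}})$. Now \cref{lemma:surjectivemapofbundles}, applied to $g_{i+1} : Y_i \to Y_{i+1}$ (legitimate since $t_{g_{i+1}}$ is surjective), shows that $g_{i+1}$ is a $K(-,n)$-bundle whose associated $\pi_1(Y_{i+1})$-action is $\ker(t_{g_{i+1}})$ precomposed with $\pi_1(f_{i+1})$. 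Since $g_{i+1}$ is principal, \cref{proposition:characterizationprincipalfibration} forces this action to be trivial; and because $f_{i+1}$ is a $K(-,n)$-bundle over the connected type $X$ its fibers are connected, so $\pi_1(f_{i+1})$ is surjective and $\ker(t_{g_{i+1}})$ is already trivial as a $\pi_1(X)$-action. Hence each $\phi_{i+1}/\phi_i$ is trivial, and the filtration is a nilpotent structure on $\phi$, as desired.

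I expect the main obstacle to be the careful treatment of this last identification: verifying that $t_{e_i}$ induces an \emph{equivariant} equivalence $\phi_{i+1}/\phi_i \simeq \ker(t_{g_{i+1}})$ rather than a mere isomorphism of groups, and that the associated action of $g_{i+1}$ produced by \cref{lemma:surjectivemapofbundles} matches the $\pi_1(X)$-action on $\ker(t_{g_{i+1}})$ after pulling back along $\pi_1(f_{i+1})$, so that the triviality criterion of \cref{proposition:characterizationprincipalfibration} can be transported from $\pi_1(Y_{i+1})$ down to $\pi_1(X)$. A minor preliminary point, which I would dispatch first, is to make precise that the factors of the given factorization may be taken to be pointed $K(-,n)$-bundles between pointed connected types, since that is what licenses the uses of \cref{lemma:compositionKnbundles}, \cref{lemma:surjectiveaction}, \cref{lemma:surjectivemapofbundles} and \cref{proposition:characterizationprincipalfibration}.
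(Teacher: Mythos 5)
Your argument is correct and follows essentially the same route as the paper's proof: define $\phi_i$ as the kernel of the composite surjection $\phi'_0 \to \phi'_i$ of associated actions, identify the subquotient $\phi_{i+1}/\phi_i$ with $\ker(t_{g_{i+1}})$, and then deduce its triviality from \cref{lemma:surjectivemapofbundles}, \cref{proposition:characterizationprincipalfibration}, and the $\pi_1$-surjectivity of $f_{i+1}$. Your version is a bit more explicit about the functoriality $t_{e_{i+1}} = t_{g_{i+1}}\circ t_{e_i}$ and about the fact that normality and centrality come for free because $A$ is abelian, but these are elaborations of, not deviations from, the paper's argument.
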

\begin{proof}
    Suppose we are given a factorization of $f$
    \[
        Y \equiv Y_0 \xrightarrow{g_1} Y_1 \xrightarrow{g_2} \cdots \xrightarrow{g_{k-1}} Y_{k-1} \xrightarrow{g_k} Y_k \equiv X.
    \]
    Let $f_i : Y_i \to X$ denote the composite of the maps $g_{i+1}$ to $g_k$, so that we have commuting triangles
    \[
        \begin{tikzpicture}
          \matrix (m) [matrix of math nodes,row sep=2em,column sep=2em,minimum width=2em,nodes={text height=1.75ex,text depth=0.25ex}]
            { Y_i &   & Y_{i+1} \\
              & X, &  \\};
          \path[-stealth]
            (m-1-1) edge node [above] {$g_{i+1}$} (m-1-3)
                    edge node [below] {$f_i\,\,\,\,$} (m-2-2)
            (m-1-3) edge node [below] {$\,\,\,\,f_{i+1}$} (m-2-2)
            ;
        \end{tikzpicture}
    \]
    where $f_k : Y_k \to X$ is the identity.
    By \cref{lemma:compositionKnbundles}, the map $f_i$ is a $K(-,n)$-bundle for every $i$.

    For each $i$, consider the actions associated to the maps $f_i$ and $f_{i+1}$, and call them $\phi'_i$ and $\phi'_{i+1}$.
    From \cref{lemma:functorialityaction} we know that the map $g_{i+1}$ induces a map of actions $\phi'_i \to_\act \phi'_{i+1}$,
    and, from \cref{lemma:surjectiveaction}, we know this map is surjective.
    By composing these maps, we obtain, for each $i$, a surjective map $\phi'_0 \to \phi'_i$.
    Let $\phi_i$ be the kernel of this map. This gives us a sequence of normal subactions
    \[
        \phi_0 \lhd \phi_1 \lhd \cdots \lhd \phi_k
    \]
    such that $\phi_k$ is equivalent to $\phi'_0$, the action associated to $f$.
    So this sequence in fact gives us a filtration of the action associated to $f$.
    Now, analogously to \cref{proposition:characterizationnilpgroup}, the quotient
    $\phi_{i+1}/\phi_i$ is equivalent to $k_i$, the kernel of the map $\phi'_i \to_\act \phi'_{i+1}$.
    So it remains to show that $k_i$ is a trivial action.

    The action associated to $g_i$ is surjective. \cref{lemma:surjectivemapofbundles}
    then implies that the action associated to $g_i$ is equivalent to $k_i \circ f_i$.
    Moreover, $g_i$ is principal, so its associated action is trivial,
    by \cref{proposition:characterizationprincipalfibration}.
    Since $f_i$ is homotopy surjective, it follows that $k_i$ is a trivial action, as needed.
\end{proof}

% right after
\begin{thm}\label{theorem:mainchar}
    Let $X$ and $Y$ be pointed, connected types, and $f : X \to Y$ a pointed $K(-,n)$-bundle.
    Then the type of nilpotent structures on $(\overline{f},s_1)$ is equivalent to the type
    of factorizations of $f$ as a finite composite of principal fibrations.
\end{thm}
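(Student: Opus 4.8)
The plan is as follows. \cref{proposition:structuretofactorization} and \cref{proposition:factorizationtostructure} already provide maps back and forth between the two types in the statement, so the remaining task is to check that these maps are mutually inverse. I would organize this by factoring both through an auxiliary type $S$, the type of finite sequences of surjective maps of $\pi_1(Y)$-actions
\[
    (\overline{f},s_1) \equiv \psi_0 \twoheadrightarrow \psi_1 \twoheadrightarrow \cdots \twoheadrightarrow \psi_k \equiv \unit
\]
such that, for each $i$, the kernel of $\psi_i \to_\act \psi_{i+1}$ is a trivial action. The theorem then reduces to producing two equivalences: (a) the type of nilpotent structures on $(\overline{f},s_1)$ is equivalent to $S$, and (b) the type of factorizations of $f$ into principal fibrations is equivalent to $S$. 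Composing (a) and (b) gives the desired equivalence, and it will be visibly the same as the pair of maps of \cref{proposition:structuretofactorization} and \cref{proposition:factorizationtostructure}.

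Equivalence (a) is the analogue for actions of \cref{lemma:injectionssurjections}, proved in the same way: a nilpotent structure $\unit \equiv \phi_0 \lhd_\act \cdots \lhd_\act \phi_k \equiv (\overline{f},s_1)$ corresponds to the sequence of quotients $\psi_i \defeq (\overline{f},s_1)/\phi_i$, and a sequence in $S$ corresponds to the filtration by the kernels of the composites $(\overline{f},s_1) \to_\act \psi_i$; since $A$ is abelian the central-extension condition on underlying groups is automatic, so the only surviving condition on both sides is triviality of the successive quotient/kernel actions, and the two operations are mutually inverse exactly as in the proof of \cref{lemma:injectionssurjections}.

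For equivalence (b), the map from factorizations to $S$ sends $X \equiv Z_0 \xrightarrow{g_1} \cdots \xrightarrow{g_k} Z_k \equiv Y$ to the sequence of associated actions $\psi_i$ of the partial composites $f_i : Z_i \to Y$: the maps $\psi_i \to_\act \psi_{i+1}$ are surjective by \cref{lemma:surjectiveaction}, and each kernel is a trivial action because $g_{i+1}$ is principal --- by \cref{lemma:surjectivemapofbundles} the associated action of $g_{i+1}$ is that kernel pulled back along $f_{i+1}$, by \cref{proposition:characterizationprincipalfibration} it is trivial, and $\pi_1(f_{i+1})$ is surjective. The map from $S$ to factorizations transfers $f$ along each composite $(\overline{f},s_1) \to_\act \psi_i$ using \cref{definition:transferred}, obtaining bundles $f_i : Z_i \to Y$ over $\psi_i$, and uses the (straightforward) functoriality of the transfer along $\psi_i \to_\act \psi_{i+1}$ to produce the connecting maps $g_{i+1} : Z_i \to Z_{i+1}$ of \cref{remark:functorialityinducedbundle}; by \cref{lemma:surjectivemapofbundles} each $g_{i+1}$ is a $K(-,n)$-bundle whose associated action is the prescribed trivial kernel, hence a principal fibration by \cref{proposition:characterizationprincipalfibration}. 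These are precisely the computations already performed inside the proofs of \cref{proposition:factorizationtostructure} and \cref{proposition:structuretofactorization}.

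It remains to check the two round-trips of (b). The composite $S \to \text{factorizations} \to S$ is the identity essentially by construction, since by \cref{definition:transferred} the associated action of a bundle transferred along a map of actions is the target of that map. The other round-trip, $\text{factorizations} \to S \to \text{factorizations}$, is the crux; I would prove it by induction on the length $k$. Peeling off the last map $g_k : Z_{k-1} \to Y$, both $g_k$ and the bundle transferred along $\psi_{k-1} \to_\act \psi_k \equiv \unit$ are pointed $K(-,n)$-bundles over $Y$ inducing surjective maps of actions which agree by construction, so \cref{lemma:ifactionscoincide} supplies a pointed equivalence $Z_{k-1} \simeq Z'_{k-1}$ compatible with $g_k$ and with $f_{k-1}$; transporting the remaining length-$(k-1)$ factorization along this equivalence and invoking the inductive hypothesis over $Z_{k-1}$ closes the step. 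The main obstacle is exactly this inductive argument: one must arrange the stagewise equivalences produced by \cref{lemma:ifactionscoincide} so that they are coherent with all of the maps $g_i$ and with the entire tower of actions, so that they glue into an honest identification of factorizations rather than a mere level-by-level equivalence; once this coherence is in place, composing (a) and (b) finishes the proof.
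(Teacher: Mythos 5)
Your proposal follows essentially the same route as the paper: both use \cref{proposition:structuretofactorization} and \cref{proposition:factorizationtostructure} to construct the back-and-forth maps, observe that the nilpotent-structure $\to$ factorization $\to$ nilpotent-structure round-trip is immediate by construction, and invoke \cref{lemma:ifactionscoincide} for the hard direction. Your auxiliary type $S$ of surjective action sequences is already implicit in the proofs of those two propositions, and your explicit flagging of the stagewise-coherence issue in the induction is honest care that the paper's terse ``follows from \cref{lemma:ifactionscoincide}'' glosses over.
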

\begin{proof}
    We constructed maps back and forth in \cref{proposition:factorizationtostructure} and \cref{proposition:structuretofactorization}.
    The fact that given a nilpotent structure on $(\overline{f},s_1)$ we obtain the same
    structure after applying \cref{proposition:structuretofactorization} and then \cref{proposition:factorizationtostructure}
    is clear by construction.

    The other composite follows from \cref{lemma:ifactionscoincide}.
\end{proof}

As a direct corollary we get the main characterization of nilpotent types.

\begin{thm}\label{theorem:nilpotentchar}
    For a connected type $X$ the following are equivalent:
    \begin{itemize}
        \item The type $X$ is nilpotent.
        \item Each map $\ttrunc{n+1}{X} \to \ttrunc{n}{X}$ in the Postnikov tower of $X$
            merely factors as a finite composite of principal fibrations.
    \qed
    \end{itemize}
\end{thm}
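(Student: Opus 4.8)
The plan is to reduce the statement to \cref{theorem:mainchar}, applied level by level to the Postnikov tower of $X$. Since $X$ is connected we may choose a basepoint, and this is harmless because both sides of the asserted equivalence are mere propositions. Write $f_n : \ttrunc{n}{X} \to \ttrunc{n-1}{X}$ for the $n$-th map of the Postnikov tower, $n \geq 1$; its domain and codomain are pointed and connected. A short inspection of the long exact sequence of homotopy groups of $f_n$ shows that its fiber over the basepoint is $(n-1)$-connected and $n$-truncated with $n$-th homotopy group $\pi_n(X)$, so this fiber is $K(\pi_n(X),n)$ and $f_n$ is a pointed $K(\pi_n(X),n)$-bundle between pointed, connected types. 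For $n=1$ this is just the map $\ttrunc{1}{X}\to\unit$, whose fiber is $\ttrunc{1}{X}\simeq K(\pi_1(X),1)$ by \cref{lemma:replacebyKG1}.

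First I would identify the action associated (\cref{definition:associatedaction}) to each $f_n$. For $n\geq 2$ the associated action is the action $\pi_1(X)\curvearrowright\pi_n(X)$ itself, this being precisely how the latter is defined. For $n=1$ it is an action of the trivial group $\pi_1(\unit)$ on $\pi_1(X)$, and unwinding the definition of a nilpotent structure on an action one checks that, for the trivial group acting on a group $H$, the triviality conditions on the quotient actions hold vacuously, so the type of nilpotent structures on this action is the type of central series of $\pi_1(X)$, i.e. the type of nilpotent structures on the group $\pi_1(X)$. Now \cref{theorem:mainchar}, applied to $f_n$ for each $n\geq 1$, supplies an equivalence between the type of nilpotent structures on the action associated to $f_n$ and the type of factorizations of $f_n$ as a finite composite of principal fibrations. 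Taking the product of these equivalences over $n$ and comparing with the definition of a nilpotent structure on $X$ gives an equivalence
\[
  \bigl(\text{nilpotent structures on }X\bigr)\quad\simeq\quad\prd{n\geq 1}\bigl(\text{factorizations of }f_n\text{ as finite composites of principal fibrations}\bigr),
\]
and truncating shows that $X$ is nilpotent precisely when every map of the Postnikov tower merely factors; in particular the ``only if'' direction is immediate.

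The routine but slightly delicate parts are the homotopy-group bookkeeping needed to present each $f_n$ as a $K(\pi_n(X),n)$-bundle with the expected associated action, and the degenerate treatment of the bottom map $\ttrunc{1}{X}\to\unit$ via \cref{lemma:replacebyKG1}. The one genuine subtlety is the last step: reading the conclusion literally as ``each Postnikov map merely factors'' and then recovering ``$X$ merely has a nilpotent structure'' would require commuting a propositional truncation past a countable product, which is not available in Homotopy Type Theory without further choice principles; the natural reading is therefore that the truncation sits outside the product (``there merely is a single factorization of all the Postnikov maps at once''), and for truncated $X$ the product is finite so the point does not arise. I expect this to be the only issue needing a remark; everything else is a direct application of \cref{theorem:mainchar}.
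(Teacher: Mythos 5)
Your approach is the same as the paper's, which simply records \cref{theorem:nilpotentchar} as a corollary of \cref{theorem:mainchar} with no further proof. Your write-up supplies the bookkeeping that the \qed implicitly leaves to the reader: the observation that each Postnikov map $\ttrunc{n}{X}\to\ttrunc{n-1}{X}$ is a pointed $K(\pi_n(X),n)$-bundle between pointed connected types, that its associated action is by definition the action $\pi_1(X)\curvearrowright\pi_n(X)$ for $n\geq 2$, and the degenerate treatment of $n=1$ (which one could also run through \cref{proposition:characterizationnilpgroup} directly). All of this is correct.

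Your closing remark about the truncation is a genuinely good catch and is not addressed in the paper. Being nilpotent is $\bigl\|\prd{n}\text{(nilpotent structure at level }n)\bigr\|$, with the truncation \emph{outside} the product. \cref{theorem:mainchar} gives a level-wise equivalence of untruncated types, hence an equivalence $\bigl\|\prd{n}\text{(nilp.\ str.)}\bigr\| \simeq \bigl\|\prd{n}\text{(factorization)}\bigr\|$. If the second bullet point of the theorem is read as $\prd{n}\bigl\|\text{(factorization)}\bigr\|$, the backward direction would indeed require commuting a truncation past a countable product, i.e.\ a form of countable choice, which HoTT does not provide. So the statement must be read with the truncation outside the product, and your observation that this is automatic when $X$ is truncated (the product being effectively finite) is correct as well. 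This is worth an explicit remark, and you were right to flag it.
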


\section{Cohomology isomorphisms}
\label{section:cohiso}

In this section we prove that if a pointed map between nilpotent types induces
a cohomology isomorphism with coefficients in every abelian group, then it induces an isomorphism in all homotopy groups.
We use this to prove that the suspension of an $\infty$-connected map between connected types is $\infty$-connected.
The main ideas in this section appear in \citep{shulman}.

\begin{defn}
    A pointed map $f : Y \to_\sbt X$ between pointed, connected types is a \define{cohomology isomorphism}
    if for every abelian group $A$ we have that $f^* : \tilde{H}^\bullet(X;A) \to \tilde{H}^\bullet(Y;A)$
    is an isomorphism.
\end{defn}

Notice that we are using reduced cohomology. The main result of this section can also be stated and proved
using non-reduced cohomology.

\begin{defn}
    A map $f : Y \to X$ is \define{$\infty$-connected} if for all $n : \N$ we have that
    $\ttrunc{n}{f} : \ttrunc{n}{Y} \to \ttrunc{n}{X}$ is an equivalence.
\end{defn}

It is straightforward to see that a pointed map between pointed, connected types is $\infty$-connected
if and only if it induces an isomorphism in all homotopy groups.

% right below
\begin{lem}
    \label{remark:inftyconnimplies}
    A map $f : X \to_\sbt Y$ between pointed, connected types is a
    cohomology isomorphism if and only if the map $\ttrunc{n}{f} : \ttrunc{n}{X} \to_\sbt \ttrunc{n}{Y}$ is a cohomology isomorphism
    for all $n : \N$.
    In particular, any $\infty$-connected, pointed map between pointed, conected types is a cohomology isomorphism.
\end{lem}
\begin{proof}
    The second statement follows at once from the first one.
    The first one follows from the fact that
    cohomology isomorphisms are defined by mapping into truncated types.
\end{proof}

\begin{lem}\label{lemma:eilenbergmaclanearelocal}
    For any cohomology isomorphism $f : Y \to_\sbt X$, abelian group $A$, and $n : \N$,
    the precomposition map $f^* : (X \to_\sbt K(A,n)) \to (Y \to_\sbt K(A,n))$ is an equivalence.
\end{lem}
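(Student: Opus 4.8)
The plan is to reduce the statement to the fact that a cohomology isomorphism is exactly a statement about maps into Eilenberg--Mac Lane spaces, and then bootstrap from $\pi_0$ of the mapping types (which is the cohomology group) to the full mapping types. Recall that by definition $\tilde{H}^n(X;A) \equiv \ttrunc{0}{X \to_\sbt K(A,n)}$, so the hypothesis that $f$ is a cohomology isomorphism tells us precisely that $f^* : \ttrunc{0}{X \to_\sbt K(A,n)} \to \ttrunc{0}{Y \to_\sbt K(A,n)}$ is an equivalence for every abelian $A$ and every $n$. The goal is to promote this $0$-truncated equivalence to an equivalence $f^* : (X \to_\sbt K(A,n)) \to (Y \to_\sbt K(A,n))$ before truncating.

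First I would observe that the type $X \to_\sbt K(A,n)$ is itself an infinite loop space: by the universal property of Eilenberg--Mac Lane spaces (\cite{FinsterLicata}, or the $\Omega$-spectrum structure on the $K(A,-)$), we have pointed equivalences $(X \to_\sbt K(A,n)) \simeq \loopspacesym(X \to_\sbt K(A,n+1)) \simeq \cdots$, and more usefully $\pi_k(X \to_\sbt K(A,n)) \cong \tilde{H}^{n-k}(X;A)$ for $k \leq n$, with all higher homotopy groups vanishing since $X \to_\sbt K(A,n)$ is $\leq n$-truncated? — actually one must be slightly careful, but the key point is that all homotopy groups of $X \to_\sbt K(A,n)$ are cohomology groups of $X$ (in various degrees), which also follows from the fact that $\loopspacesym^k(X\to_\sbt K(A,n)) \simeq (X \to_\sbt \loopspacesym^k K(A,n)) \simeq (X \to_\sbt K(A,n-k))$ for $k\le n$. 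Since $f^*$ commutes with these loop-space equivalences, and on $\pi_0$ of each of these shifted mapping types $f^*$ is exactly the cohomology-isomorphism hypothesis $\tilde{H}^{n-k}(X;A)\to\tilde{H}^{n-k}(Y;A)$, we conclude that $f^*$ induces an isomorphism on all homotopy groups of the pointed, connected mapping types $\ttrunc{0}{\loopspacesym^0(\cdots)}$... the cleaner route is: $f^*$ is a pointed map of pointed types whose looping at every level is (after identifying with a mapping type into a lower $K(A,m)$) a map inducing an iso on $\pi_0$; hence $f^*$ induces an iso on $\pi_k$ for all $k$, hence by Whitehead's theorem for truncated types (\cite[Theorem~8.8.3]{hottbook}, noting both sides are $n$-truncated) $f^*$ is an equivalence.

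Concretely, the steps are: (1) identify $\loopspacesym^k(X \to_\sbt K(A,n)) \simeq (X \to_\sbt K(A,n-k))$ compatibly with precomposition by $f$, for $0 \le k \le n$; (2) note $X \to_\sbt K(A,n)$ is $n$-truncated (it maps into an $n$-truncated type), so it suffices to control $\pi_k$ for $k \le n$; (3) take $\pi_0$ of the equivalences in (1) to see that $\pi_k(f^*)$ is identified with $f^* : \tilde{H}^{n-k}(X;A) \to \tilde{H}^{n-k}(Y;A)$, which is an isomorphism by hypothesis; (4) apply the Whitehead theorem for $n$-truncated types to conclude $f^*$ is an equivalence. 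I expect step (1) --- making the identification of $\loopspacesym^k$ of the mapping space with a lower mapping space \emph{natural in the domain}, i.e.\ compatible with $f^*$ --- to be the main obstacle, though it is essentially formal: it comes from the natural pointed equivalence $\loopspacesym(X \to_\sbt Z) \simeq (X \to_\sbt \loopspacesym Z)$ together with the $\Omega$-spectrum equivalences $\loopspacesym K(A,m+1) \simeq K(A,m)$ from \cite{FinsterLicata}. One subtlety to address is the base case $k=n$: there $(X\to_\sbt K(A,0))$ is a set, and one must make sure the argument still delivers an iso on $\pi_n$; this is fine since $\tilde{H}^0(X;A)$ is by definition the relevant set and $f^*$ on it is assumed to be an iso.
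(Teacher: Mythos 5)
Your overall strategy matches the paper's: both reduce to the truncated Whitehead theorem~\cite[Theorem~8.8.3]{hottbook} and then identify the homotopy groups of the mapping type $X \to_\sbt K(A,n)$ with lower cohomology groups of $X$ via loop-space/$\Omega$-spectrum identifications. However, there is a genuine gap in your treatment of step~(3).

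The truncated Whitehead theorem requires that $f^*$ induce an isomorphism on $\pi_k$ \emph{at every basepoint} $g : X \to_\sbt K(A,n)$, not only at the canonical basepoint given by the constant map. Your identification $\loopspacesym^k(X \to_\sbt K(A,n)) \simeq (X \to_\sbt K(A,n-k))$ (via $\loopspacesym(X \to_\sbt Z) \simeq (X \to_\sbt \loopspacesym Z)$ and $\loopspacesym K(A,m+1) \simeq K(A,m)$) is only a pointed equivalence when the loop space on the left is taken at the constant map, since the $\Omega$-spectrum equivalences transport the canonical basepoint of $K(A,m)$ to the constant loop. Since the mapping type $X \to_\sbt K(A,n)$ is generally not connected, this does not by itself control $\pi_k$ on the other components. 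The paper fills this gap with an explicit argument: $X \to_\sbt K(A,n)$ is a group object (by pointwise multiplication in $K(A,n)$), so given any $g$ one can multiply pointwise by $g^{-1}$ to produce a pointed equivalence $\loopspacesym(X \to_\sbt K(A,n), g) \simeq \loopspacesym(X \to_\sbt K(A,n), \lambda x. \ast)$ that is moreover natural with respect to precomposition by $f$; the same applies to $Y \to_\sbt K(A,n)$ with basepoint $f^*(g)$. After this translation, the computation you outline in steps~(1)--(3) applies and gives the isomorphism $\tilde{H}^{n-k}(X;A) \to \tilde{H}^{n-k}(Y;A)$. You should add this translation step; without it, you have only verified the Whitehead hypothesis on the component of the constant map.

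Your remark about $k = n$ and $X \to_\sbt K(A,0)$ being a set is fine and not an obstruction: that mapping type is indeed a set, and what the argument needs there is an iso on $\pi_n$ taken as $\ttrunc{0}{\loopspacesym^n(-)}$, which is exactly the claimed cohomology isomorphism in degree $0$.
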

\begin{proof}
    Since $K(A,n)$ is $n$-truncated, the domain and codomain of $f^*$ are $n$-truncated, so we
    can use the truncated Whitehead theorem \citep[Theorem~8.8.3]{hottbook}.
    It is thus enough to check that $f^*$ induces a bijection after $0$-truncating, and an
    isomorphism in $\pi_k$ for every pointing $g : X \to_\sbt K(A,n)$ and every $k : \N$.

    The statement about $\pi_0$ is clear, since after $0$-truncating, we get
    the induced map on cohomology $\tilde{H}^n(X;A) \to \tilde{H}^n(Y;A)$, which is an isomorphism by hypothesis.

    Given $g : X \to_\sbt K(A,n)$, notice that
    \[
        \loopspacesym(X \to_\sbt K(A,n),g) \simeq \loopspacesym(X \to_\sbt K(A,n), (\lambda x. \ast))
    \]
    since $X \to_\sbt K(A,n)$ is a group object, so any loop at $g$ can be multiplied pointwise by
    $g^{-1}$ to get a loop at the constant map $(\lambda x. \ast)$.
    The same can be argued about $Y \to_\sbt K(A,n)$, using $f^*(g)$ instead of $g$.
    Under this correspondence, given $k>0$, $\ttrunc{0}{\loopspacesym^k f^*}$ corresponds to the induced map
    in cohomology $\tilde{H}^{n-k}(X;A) \to \tilde{H}^{n-k}(Y;A)$, which again is an isomorphism by hypothesis.
\end{proof}

\begin{lem}\label{lemma:nilpotentarelocal}
    For any cohomology isomorphism $f : Y \to_\sbt X$ and truncated, pointed, nilpotent type $Z$, 
    the precomposition map $f^* : (X \to_\sbt Z) \to (Y \to_\sbt Z)$ is an equivalence.
\end{lem}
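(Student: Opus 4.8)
The plan is to bootstrap from the Eilenberg--Mac Lane case, \cref{lemma:eilenbergmaclanearelocal}, by an induction along a finite tower of principal fibrations. Since being an equivalence is a mere proposition and $Z$ is nilpotent, by \cref{theorem:nilpotentchar} we may work as if, for each $k$, the map $\ttrunc{k+1}{Z} \to \ttrunc{k}{Z}$ in the Postnikov tower of $Z$ comes equipped with a chosen factorization as a finite composite of principal fibrations. As $Z$ is connected and $N$-truncated for some $N$, its Postnikov tower $Z = \ttrunc{N}{Z} \to \cdots \to \ttrunc{0}{Z} = \unit$ is finite, so concatenating these factorizations produces a finite tower of pointed types $Z = W_m \to W_{m-1} \to \cdots \to W_0 = \unit$ in which every map $W_j \to W_{j-1}$ is a principal fibration. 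By definition such a map is the fiber of a pointed map $W_{j-1} \to K(A_j,n_j)$ for some abelian group $A_j$ and some $n_j \geq 2$, so we obtain pointed fiber sequences $W_j \to W_{j-1} \to K(A_j,n_j)$.

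I would then prove, by induction on $j$, that $f^* : (X \to_\sbt W_j) \to (Y \to_\sbt W_j)$ is an equivalence; the case $j = m$ is the lemma. For $j = 0$ both sides are contractible. For the inductive step, observe that $\Mapp(X,-)$ preserves pullbacks and the terminal object, and hence turns the fiber sequence $W_j \to W_{j-1} \to K(A_j,n_j)$ into a fiber sequence $(X \to_\sbt W_j) \to (X \to_\sbt W_{j-1}) \to (X \to_\sbt K(A_j,n_j))$, and likewise with $Y$ in place of $X$; precomposition with $f$ then defines a map from the first of these fiber sequences to the second. Its middle component is an equivalence by the inductive hypothesis, and its right-hand component is an equivalence by \cref{lemma:eilenbergmaclanearelocal}. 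Since the left-hand term of each fiber sequence is the pullback of the corresponding square along the inclusion of the base point, the left-hand component is the map induced on pullbacks by a triple of equivalences, hence itself an equivalence; and this map is exactly $f^*$ on $(X \to_\sbt W_j)$. This closes the induction.

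The only step that is more than routine is the first paragraph: one must check that the finite Postnikov tower of the truncated nilpotent type $Z$, together with the data of its nilpotent structure, genuinely assembles into a single finite tower of principal fibrations from $Z$ down to $\unit$, with pointed (indeed connected) intermediate stages, so that \cref{theorem:nilpotentchar} applies level by level and each $W_j \to W_{j-1} \to K(A_j,n_j)$ is an honest pointed fiber sequence. Once that is in place the rest is formal, resting only on the behaviour of $\Mapp(X,-)$ on fiber sequences and on the fact that in a map of fiber sequences an equivalence on total spaces together with an equivalence on bases forces an equivalence on fibers.
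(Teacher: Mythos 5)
Your proof is correct and is essentially the paper's proof: both reduce to a chosen nilpotent structure via the mere-proposition trick, induct along the resulting finite tower of principal fibrations, apply $\Mapp(\blank,\blank)$ to the fiber sequence $W_j \to W_{j-1} \to K(A_j,n_j)$, and close the induction using \cref{lemma:eilenbergmaclanearelocal} on the base together with the inductive hypothesis on the middle term. The only cosmetic difference is that you spell out the assembly of the full tower before inducting, whereas the paper phrases the same induction directly in terms of the nilpotency degree.
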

\begin{proof}
    Since we have to prove a mere proposition, we can assume that we have a nilpotent structure for $Z$.
    We can then prove the result by induction on the nilpotency degree of $Z$.
    If $Z$ is contractible, the statement is clear. So let us assume that we have a principal fibration
    $K(A,n) \to Z \to Z' \to K(A,n+1)$.
    By mapping out of $f : Y \to X$, we obtain a map of fiber sequences
    \[
        \begin{tikzpicture}
          \matrix (m) [matrix of math nodes,row sep=2em,column sep=3em,minimum width=2em,nodes={text height=1.75ex,text depth=0.25ex}]
            { (X \to_\sbt Z) & (X \to_\sbt Z') & (X \to_\sbt K(A,n+1)) \\
              (Y \to_\sbt Z) & (Y \to_\sbt Z') & (Y \to_\sbt K(A,n+1)) \\};
          \path[->]
            (m-1-1) edge [right hook->] node [right] {} (m-1-2)
                    edge node [right] {} (m-2-1)
            (m-1-2) edge node [right] {} (m-1-3)
                    edge node [right] {} (m-2-2)
            (m-2-2) edge node [right] {} (m-2-3)
            (m-2-1) edge [right hook->] node [right] {} (m-2-2)
            (m-1-3) edge node [right] {} (m-2-3)
            ;
        \end{tikzpicture}
    \]
    By \cref{lemma:eilenbergmaclanearelocal}, the right vertical map is an equivalence, and by
    inductive hypothesis, the middle vertical map is an equivalence too.
    So we conclude that the left vertical map is an equivalence, as needed.
\end{proof}

\begin{thm}\label{theorem:cohomologyiso}
    A map between pointed, nilpotent types is a cohomology isomorphism if and only if it is $\infty$-connected.
\end{thm}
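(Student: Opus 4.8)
The plan is to reduce the statement to \cref{lemma:nilpotentarelocal} and then run a Yoneda-style argument in the homotopy category of truncated types; all the genuine work has already been done in \cref{lemma:eilenbergmaclanearelocal} and \cref{lemma:nilpotentarelocal}.

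One direction needs nothing new: if $f : Y \to_\sbt X$ is $\infty$-connected, then it is a cohomology isomorphism by \cref{remark:inftyconnimplies}, and this implication does not even use nilpotency.

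For the converse, suppose $f : Y \to_\sbt X$ is a cohomology isomorphism between nilpotent types. By the definition of $\infty$-connectedness it suffices to show that $\ttrunc{n}{f} : \ttrunc{n}{Y} \to_\sbt \ttrunc{n}{X}$ is an equivalence for every $n : \N$, so fix $n$ and write $g \defeq \ttrunc{n}{f}$. By \cref{remark:inftyconnimplies} the map $g$ is again a cohomology isomorphism, and by \cref{example:truncofnilp} both $\ttrunc{n}{X}$ and $\ttrunc{n}{Y}$ are truncated, pointed, nilpotent types. Applying \cref{lemma:nilpotentarelocal} to the cohomology isomorphism $g$, with each of these in turn as the test type, shows that the two precomposition maps
\[
  g^* : (\ttrunc{n}{X} \to_\sbt \ttrunc{n}{Y}) \to (\ttrunc{n}{Y} \to_\sbt \ttrunc{n}{Y}),
  \qquad
  g^* : (\ttrunc{n}{X} \to_\sbt \ttrunc{n}{X}) \to (\ttrunc{n}{Y} \to_\sbt \ttrunc{n}{X})
\]
are equivalences. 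From here the argument is formal. Using the first equivalence, I would take $r \defeq (g^*)^{-1}(\idfunc{\ttrunc{n}{Y}})$, a pointed map with $r \circ g = \idfunc{\ttrunc{n}{Y}}$. To see that $r$ is also a section of $g$, one computes $g^*(g \circ r) = (g \circ r) \circ g = g \circ (r \circ g) = g = \idfunc{\ttrunc{n}{X}} \circ g = g^*(\idfunc{\ttrunc{n}{X}})$, and since the second $g^*$ above is an equivalence, hence an embedding, this forces $g \circ r = \idfunc{\ttrunc{n}{X}}$. Therefore $g = \ttrunc{n}{f}$ is a pointed equivalence, and as $n$ was arbitrary, $f$ is $\infty$-connected.

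Since the substance is in the two preceding lemmas, there is no real obstacle remaining in this proof. The points that still need a little care are the reduction to the truncated case — which uses \cref{remark:inftyconnimplies} together with the closure of nilpotency under truncation (\cref{example:truncofnilp}) — and the observation that one must feed \emph{both} $\ttrunc{n}{X}$ and $\ttrunc{n}{Y}$ into \cref{lemma:nilpotentarelocal} as test objects, since a single instance only yields a one-sided inverse.
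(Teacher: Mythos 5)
Your proof is correct and follows the same route the paper takes: reduce to the $n$-truncated case via \cref{remark:inftyconnimplies} and \cref{example:truncofnilp}, then invoke \cref{lemma:nilpotentarelocal} together with a Yoneda-type argument. The paper leaves the Yoneda step implicit; you have simply spelled it out (testing against both $\ttrunc{n}{X}$ and $\ttrunc{n}{Y}$ to extract a two-sided inverse), which is exactly what the paper's phrase "a Yoneda-type argument" is gesturing at.
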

\begin{proof}
    Any $\infty$-connected map is a cohomology isomorphism, by \cref{remark:inftyconnimplies}.

    For the other implication, notice that, for all $n : \N$, the map $\ttrunc{n}{f} : \ttrunc{n}{X} \to \ttrunc{n}{Y}$
    is a cohomology isomorphism between pointed, nilpotent, $n$-truncated types. This follows from
    \cref{example:truncofnilp} and \cref{remark:inftyconnimplies}.
    Using \cref{lemma:nilpotentarelocal} and a Yoneda-type argument, we deduce that $\ttrunc{n}{f}$ is an equivalence
    for every $n : \N$, as required.
\end{proof}

As an interesting corollary, we get the following.

\begin{thm}
    The suspension of an $\infty$-connected map between connected types is $\infty$-connected.
\end{thm}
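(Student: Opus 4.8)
The plan is to deduce this from \cref{theorem:cohomologyiso} by a clean two-step reduction, converting a statement about arbitrary connected types to one about nilpotent (indeed simply connected) types. First I would reduce to the pointed case: given an $\infty$-connected map $f : X \to Y$ between connected types, both $X$ and $Y$ are merely pointed, and since being $\infty$-connected is a mere proposition, I may assume $f$ is pointed. Then $\suspsym f : \suspsym X \to \suspsym Y$ is a map of pointed types. The second and key reduction is that $\suspsym X$ and $\suspsym Y$ are \emph{simply connected}: the suspension of any pointed connected type is $1$-connected (its $\pi_1$ vanishes, since a pointed connected type has a connected suspension whose loop space is, after the Freudenthal map, governed by the connectivity of $X$; concretely $\ttrunc{1}{\suspsym X}$ is contractible because $\suspsym X$ is $1$-connected whenever $X$ is $0$-connected). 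Hence $\suspsym X$ and $\suspsym Y$ are simply connected, so in particular nilpotent by the example following the definition of nilpotent type.

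Now I would invoke \cref{theorem:cohomologyiso}: since $\suspsym X$ and $\suspsym Y$ are pointed nilpotent types, $\suspsym f$ is $\infty$-connected if and only if it is a cohomology isomorphism. So it remains to show that $\suspsym f$ induces an isomorphism $(\suspsym f)^* : \tilde{H}^\bullet(\suspsym Y; A) \to \tilde{H}^\bullet(\suspsym X; A)$ for every abelian group $A$. For this I would use the suspension–loop adjunction at the level of cohomology represented by Eilenberg--Mac Lane spaces: $(\suspsym X \to_\sbt K(A,n)) \simeq (X \to_\sbt \loopspacesym K(A,n)) \simeq (X \to_\sbt K(A,n-1))$ naturally in $X$, so $\tilde{H}^n(\suspsym X; A) \simeq \tilde{H}^{n-1}(X; A)$ and likewise for $Y$, with $(\suspsym f)^*$ corresponding under this identification to $f^* : \tilde{H}^{n-1}(Y;A) \to \tilde{H}^{n-1}(X;A)$. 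By \cref{remark:inftyconnimplies}, the $\infty$-connected map $f$ is a cohomology isomorphism, so $f^*$ is an isomorphism, hence so is $(\suspsym f)^*$. Therefore $\suspsym f$ is a cohomology isomorphism between pointed nilpotent types, so by \cref{theorem:cohomologyiso} it is $\infty$-connected.

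The main obstacle I anticipate is the bookkeeping in the second step — verifying carefully that the suspension of a pointed connected type is simply connected, and setting up the suspension–loop adjunction isomorphism on cohomology with the right naturality so that $(\suspsym f)^*$ really does correspond to $f^*$. Neither is deep: the first is a standard connectivity computation (the suspension raises connectivity by one, and $X$ being $0$-connected forces $\suspsym X$ to be $1$-connected), and the second is the standard loop–suspension adjunction together with $\loopspacesym K(A,n) \simeq K(A,n-1)$, which holds in Homotopy Type Theory and is implicit in the machinery of \cite{FinsterLicata} already used in this paper. Once these are in place, the argument is a direct application of \cref{theorem:cohomologyiso} and \cref{remark:inftyconnimplies}.
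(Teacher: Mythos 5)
Your proposal is correct and follows essentially the same route as the paper's own proof: reduce to the pointed case, note that the suspension of a connected type is simply connected (hence nilpotent), use the suspension--loop adjunction together with $\loopspacesym K(A,n)\simeq K(A,n-1)$ to show that $\suspsym$ preserves cohomology isomorphisms, and then conclude from \cref{theorem:cohomologyiso}. Your version simply spells out the adjunction step on cohomology more explicitly than the paper does.
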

\begin{proof}
    Suppose given an $\infty$-connected map $f : X \to Y$ between connected types.
    Since we are proving a mere proposition, we can assume that $X$, $Y$ and $f$ are pointed.
    From \cref{remark:inftyconnimplies}, it follows that $f$ is a cohomology isomorphism.
    Since cohomology isomorphisms are defined by mapping into Eilenberg--Mac Lane spaces, and these
    are closed under taking loop spaces, the suspension of a cohomology isomorphism is a cohomology
    isomorphism, by the adjunction between suspension and loop space.
    So $\suspsym f : \suspsym X \to \suspsym Y$ is a cohomology isomorphism between simply connected types.
    Since any simply connected type is nilpotent, it follows that $\suspsym f$ is $\infty$-connected,
    as needed.
\end{proof}

\section{Localization}
\label{section:localization}

This section is about the localization of nilpotent types away from sets of numbers.
Let $S : \N \to \N$ be a function enumerating a set of natural numbers.
A group $G$ is \define{$\degg(S)$-local} if, for each $n : \N$, the power map $S(n) : G \to G$ is a bijection.
A morphism of groups $G \to_\Grp G'$ is an \define{algebraic localization} of $G$ away from $S$
if it is the initial morphism into a $\degg(S)$-local group.
For a natural number $m$ and a pointed type $X$, let $m : \loopspacesym X \to \loopspacesym X$
denote the map that sends a loop $l$ to $l^m$.
In \citep{CORS}, the reflective subuniverse of $\degg(S)$-local types is defined.
A type $X$ is \define{$\degg(S)$-local} if, for each $n : \N$, the map $S(n) : \loopspacesym X \to \loopspacesym X$
is an equivalence. In Homotopy Type Theory, this localization plays the role of localization away from the set $S$
of classical homotopy theory.

The main goal of the section is to prove that, when applied to a nilpotent type, $\degg(S)$-localization
localizes the homotopy groups of the type.

In \cref{reflsub} we recall the theory of reflective subuniverses and separated types.
In \cref{lochomgroups} we describe the effect of localization on the homotopy groups of a nilpotent type.

\subsection{Reflective subuniverses and separated types}
\label{reflsub}

We start with a few general results about reflective subuniverses. For details we refer the reader to \citep{RSS} and \citep{CORS}.
We will need the notion of separated type for a reflective subuniverse $L$.

\begin{defn}[{\citep[Definition~2.13]{CORS}}]
    Let $L$ be a reflective subuniverse and let $X : \UU$ be a type.
    We say that $X$ is \define{$L$-separated} if its identity types are $L$-local types.
    We write $L'$ for the subuniverse of $L$-separated types.\qed
\end{defn}

Separated types form a reflective subuniverse.

\begin{thm}[{\citep[Theorem~2.26]{CORS}}]
    For any reflective subuniverse $L$, the subuniverse of $L$-separated types is again reflective.\qed
\end{thm}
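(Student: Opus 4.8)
The plan is to realize the $L$-separated types as the local types for a localization at an explicit family of maps, and then apply the result of \cite{RSS} that any localization at a family of maps is a reflective subuniverse. In the setting relevant here (and throughout \cite{CORS}) the reflective subuniverse $L$ is presented as the localization at a small family of maps $f_i : A_i \to B_i$, so that a type is $L$-local exactly when it is $f_i$-local for every $i$; I would begin by recording this.

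The key step is a computation identifying $L$-separated types with the types that are local with respect to the suspensions $\suspsym f_i : \suspsym A_i \to \suspsym B_i$. Writing $\suspsym B$ as the pushout of $\unit \leftarrow B \to \unit$, a map $\suspsym B \to X$ is the same datum as a pair of points $n, s : X$ together with a homotopy $\prd{b : B} (n = s)$; under this identification precomposition with $\suspsym f$ becomes precomposition with $f$ in the last coordinate only. Hence $X$ is $\suspsym f$-local if and only if $n =_X s$ is $f$-local for all $n, s : X$. Running this for each $f_i$ shows that $X$ is $\{\suspsym f_i\}_i$-local exactly when all of its identity types are $\{f_i\}_i$-local, which is exactly the condition that $X$ be $L$-separated. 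Therefore the subuniverse of $L$-separated types coincides with the localization at $\{\suspsym f_i\}_i$, which is reflective by \cite{RSS}; its reflector is the corresponding localization, constructed from the localization higher inductive type of \cite{RSS}.

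I expect the obstacles to be of two kinds. First, the suspension computation must be carried out carefully enough that ``precompose with $\suspsym f$'' genuinely agrees with ``precompose with $f$ on identity types'', which is a routine but delicate unfolding of the universal property of the pushout together with its naturality in the map. Second, and more seriously, if one wants the statement for an arbitrary reflective subuniverse not presented by a small family of maps, the suspension trick is unavailable; in that case the separated reflection of $X$ has to be built directly, for instance as the sequential colimit of the operation that glues in a localized copy of each identity type $a = b$, and one must then prove that this colimit is $L$-separated and enjoys the universal property of the $L'$-reflection. Checking that $L$-locality and identity types interact well with the colimit is the genuinely hard part, and is where essentially all of the work would lie.
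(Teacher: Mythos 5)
The paper does not actually prove this statement; it cites it from \cite{CORS} and marks the theorem with a \qed. So the right comparison is with the proof in \cite{CORS}. Your primary argument --- identify the $L$-separated types with the $\suspsym f$-local types via the pushout description of $\suspsym B$, then invoke \cite{RSS} --- is correct, and the suspension computation you describe is essentially \cite[Lemma~2.15]{CORS}, which the present paper cites separately and independently of Theorem~2.26.

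The gap is the one you yourself flag at the end: the theorem, as stated both here and in \cite{CORS}, is for \emph{any} reflective subuniverse $L$, and your argument presupposes that $L$ is presented by a small family $\{f_i\}$. That presentation is exactly what makes the suspension trick available, and \cite{CORS} proves Theorem~2.26 without it, by a direct construction of the separated reflection that does not appeal to accessibility. Your sketch of how one might handle the general case (sequential colimit of an identity-type-gluing operation) is plausible in spirit but is not actually carried out, and it is not obviously the same route \cite{CORS} takes; checking $L$-separatedness and the universal property of such a colimit is where the real content of the general theorem lives, so the gap is genuine. For the specific applications in this paper, however, every reflective subuniverse that appears ($\degg(S)$ and $\suspsym\degg(S)$) is accessible, so your argument does suffice for everything the paper needs.
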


Given a reflective subuniverse $L$ and a type $X : \UU$, we denote the unit of the $L'$-localization of
$X$ by $\eta' : X \to L' X$.

Given any family of maps $f : \prod_{i : I} A_i \to B_i$, there is an associated reflective subuniverse $L_f$
of $f$-local types \citep[Theorem~2.16]{RSS}. As proven in \citep[Lemma~2.15]{CORS}, the separated types with respect to
the subuniverse of $f$-local types are precisely the subuniverse of $\suspsym f$-local types. Here $\susp f$ denotes
the family $\prod_{i : I} \suspsym A_i \to \suspsym B_i$ given by suspending $f_i$ for each $i : I$.
Moreover, in the case of localization away from sets of numbers, we have the following:
\begin{prop}[{see \citep[Theorem~4.11]{CORS}}]
    \label{proposition:L'simplyconnected}
        If $X$ is simply connected, then the natural map $L_{\suspsym \degg(S)}X \to L_{\degg(S)} X$ is an equivalence.
\end{prop}

\begin{defn}
    Given a reflective subuniverse $L$, we say that a map $f : X \to Y$ is an \define{$L$-equivalence} if
    $L f : L X \to L Y$ is an equivalence.
\end{defn}

\begin{defn}
    We say that a reflective subuniverse $L$ \define{preserves a fiber sequence} $F \to E \xrightarrow{g} B$ if the induced map
    $F \to \fib{L g}{\eta b}$ is an $L$-localization. Here $b : B$ is the base point of $B$.
    In this case we will also say that $L$ \define{preserves the fiber} of $g$.
\end{defn}

We now prove some useful exactness properties of $L'$-localization.

\begin{lem}\label{lemma:Lequivalencetotalspaces}
Let $P:L'X\to \UU$ be a type family over $L'X$. 
Then the map
\begin{equation*}
f:\Big(\sm{x:X} P(\eta'(x))\Big)\to \Big(\sm{y:L'X}P(y)\Big)
\end{equation*}
given by $(x,p)\mapsto (\eta'(x),p)$ is an $L$-equivalence. 
\end{lem}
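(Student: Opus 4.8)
The plan is to unfold the definition of $L$-equivalence through the universal property of localization. Recall that a map $g : U \to V$ is an $L$-equivalence if and only if for every $L$-local type $Z$ the precomposition map $g^* : (V \to Z) \to (U \to Z)$ is an equivalence: indeed $LU$ and $LV$ are $L$-local, equivalences between $L$-local types are detected by mapping into $L$-local types, and the universal property of localization identifies $(Lg)^*$ with $g^*$. So I fix an $L$-local type $Z$ and aim to show that the map $f^* : \left(\sm{y : L'X} P(y) \to Z\right) \to \left(\sm{x : X} P(\eta'(x)) \to Z\right)$ is an equivalence.

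Next I would curry: using the equivalence $\left(\sm{a : A} B(a) \to Z\right) \simeq \prd{a : A}(B(a) \to Z)$ on both sides, and tracing through the definition of $f$, the map $f^*$ is identified with the map
\begin{equation*}
    \prd{y : L'X} Q(y) \longrightarrow \prd{x : X} Q(\eta'(x)), \qquad h \longmapsto h \circ \eta',
\end{equation*}
where $Q(y) \defeq (P(y) \to Z)$. Since $Z$ is $L$-local it is also $L'$-local (its identity types are again $L$-local), so $Q(y)$, being a function type into an $L'$-local type, is $L'$-local; hence $Q$ is a family of $L'$-local types over $L'X$. The map $h \mapsto h \circ \eta'$ is then an equivalence by the universal property of the $L'$-localization $\eta' : X \to L'X$ applied to the family $Q$. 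This yields the claim. (Equivalently, one can phrase the whole argument as saying that $f$ is the pullback of $\sm{y : L'X} P(y) \to L'X$ along $\eta'$, and that this particular pullback is an $L$-equivalence.)

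The step I expect to be the main obstacle is the last one: making available the \emph{dependent} form of the universal property of $\eta'$ with respect to families of $L'$-local types --- equivalently, that the separated subuniverse $L'$ is closed under $\Sigma$ --- together with the fact that every $L$-local type is $L'$-local. Both are facts about separated reflective subuniverses that should be extracted from (or are implicit in) \cite{CORS} and \cite{RSS}; everything else, namely the currying manipulations and the mapping-out characterization of $L$-equivalences, is routine.
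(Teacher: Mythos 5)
Your proof follows the same overall strategy as the paper's: reduce to the mapping-out characterization of $L$-equivalences, fix an $L$-local $Z$, curry, and invoke a dependent universal property of $\eta' : X \to L'X$. The one place where your proposal and the paper diverge, and where your version is a bit shaky, is the justification of the final step. You argue that $Q(y) \equiv (P(y) \to Z)$ is $L'$-local (via $Z$ being $L$-local, hence $L'$-local) and then appeal to a ``dependent universal property of $\eta'$ for families of $L'$-local types, equivalently closure of $L'$ under $\Sigma$.'' Neither of those two statements is what is actually available, and the claimed equivalence between them is also not right: for a general reflective subuniverse, the dependent eliminator for local families is not automatic and is not the same as $\Sigma$-closedness. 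The paper instead observes the slightly stronger and more useful fact that $Q(y)$ is $L$-local (a function type into an $L$-local type is $L$-local, no detour through $L'$ needed), and then cites \cite[Proposition~2.22]{CORS}, which is precisely a dependent universal property for $\eta'$ against families of $L$-local types over $L'X$. That lemma is the specific ingredient this proof needs, and it is stated for $L$-local, not $L'$-local, fibers. So the fix to your proposal is small: note $L$-locality of $Q(y)$ directly, and point at the dependent UP for $L$-local families over $L'X$, which CORS establishes; your worry about needing $L'$ to be $\Sigma$-closed then evaporates.
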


\begin{proof}
Using the characterization of $L$-equivalences of \citep[Lemma~2.9]{CORS}, it is enough to prove that $f$
induces an equivalence of mapping spaces whenever we map into an $L$-local type.
Assume given an $L$-local type $Z$ and notice that we have the following factorization of $\precomp{f}$:
    \begin{align*}
        \left(\sm{y : L'X} P(y)\right) \to Z &\simeq \prd{y : L'X} P(y) \to Z\\
                                         &\simeq \prd{x : X} P(\eta' x) \to Z \\
                                         &\simeq \left(\sm{x:X} P(\eta' x)\right) \to Z ,
    \end{align*}
where, in the second equivalence, we use \citep[Proposition~2.22]{CORS} together with the fact that
$P(y) \to Z$ is $L$-local, since $Z$ is.
\end{proof}
% right above

\begin{prop}\label{proposition:preservationfibersequences}
    Given a fiber sequence $F \to E \xrightarrow{f} X$, there is a map of fiber sequences
    \[
        \begin{tikzpicture}
          \matrix (m) [matrix of math nodes,row sep=2em,column sep=3em,minimum width=2em,nodes={text height=1.75ex,text depth=0.25ex}]
            { F & E & X \\
              LF & E' & L'X \\};
          \path[->]
            (m-1-1) edge [right hook->] node [right] {} (m-1-2)
                    edge node [left] {$\eta$} (m-2-1)
            (m-1-2) edge node [right] {} (m-1-3)
                    edge node [right] {} (m-2-2)
            (m-2-2) edge node [right] {} (m-2-3)
            (m-2-1) edge [right hook->] node [right] {} (m-2-2)
            (m-1-3) edge node [left] {$\eta'$} (m-2-3)
            ;
        \end{tikzpicture}
    \]
    such that the type $E'$ is $L'$-local and the middle vertical map is an $L$-equivalence.
    Here $F$ is the fiber over the base point $x_0 : X$, and $LF$ is the fiber over $\eta'(x_0)$.
\end{prop}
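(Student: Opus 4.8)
The plan is to realize $E'$ as the total space of a type family over $L'X$, so that \cref{lemma:Lequivalencetotalspaces} supplies the $L$-equivalence, while arranging the fiber over $\eta'(x_0)$ to be exactly the $L$-localization of $F$.

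I would first write $E \simeq \sm{x:X}\fib f x$ and replace $f$ by its fiberwise $L$-localization $\bar E \defeq \sm{x:X}L(\fib f x)$, with the evident map $E \to \bar E$ over $X$ built from the units $\fib f x \to L(\fib f x)$. This map is an $L$-equivalence: for $L$-local $Z$, precomposition with it is, after currying the mapping spaces out of the two dependent sums, the product over $x:X$ of the equivalences $(L(\fib f x)\to Z) \to (\fib f x \to Z)$.

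Next I would descend the family $x\mapsto L(\fib f x)$ along the separated unit $\eta' : X \to L'X$. Since $L$-local types are $L'$-local (their identity types are $L$-local, by a short currying argument), this is a family valued in $\UU_{L'}$; as $\UU_{L'}$ is $L'$-local, it extends along $\eta'$ to a family $P : L'X \to \UU_{L'}$ with $P(\eta'(x)) \simeq L(\fib f x)$ naturally in $x$. Put $E' \defeq \sm{y:L'X}P(y)$, with $E' \to L'X$ the first projection; then $E'$ is $L'$-local, and $\sm{x:X}P(\eta'(x)) \simeq \bar E$, so \cref{lemma:Lequivalencetotalspaces} makes the map $\bar E \to E'$, $(x,p)\mapsto(\eta'(x),p)$, an $L$-equivalence. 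Composing, $E \to \bar E \to E'$ is an $L$-equivalence lying over $\eta' : X \to L'X$ (both composites $E \to L'X$ send $(x,c)$ to $\eta'(x)$, so the square commutes). Taking fibers of the two horizontal projections to $X$ and $L'X$ over $x_0$ and $\eta'(x_0)$ then produces the asserted map of fiber sequences; the fiber map is the composite $F = \fib f{x_0} \to L(\fib f{x_0}) = P(\eta'(x_0)) =: LF$, which is the $L$-localization unit, as the notation suggests.

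The main obstacle is the descent step: one needs the family of $L$-localized fibers to extend along $\eta'$, i.e.\ that the universe of $L'$-local types is $L'$-local, and likewise that a dependent sum of $L'$-local types over an $L'$-local base is $L'$-local (for $E'$ itself to be $L'$-local). Both are instances of $\Sigma$-closure of the ambient subuniverse, which holds for $\degg(S)$-localization. If one only wants the bare statement — $E'$ merely $L'$-local with $E \to E'$ an $L$-equivalence — one can sidestep this and take $E' \defeq L'E$ with $E \to E'$ its localization unit, which is an $L$-equivalence precisely because every $L$-local type is $L'$-local, and $E' \to L'X$ the functorial image of $f$; but then the fiber over $\eta'(x_0)$ is not visibly the $L$-localization of $F$, which is why the more elaborate construction is preferable.
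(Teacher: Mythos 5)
Your overall strategy is the same as the paper's: realize $E'$ as $\sm{y:L'X}P(y)$ for a family $P$ obtained by descending the fiberwise $L$-localization of $f$ along $\eta'$, and then apply \cref{lemma:Lequivalencetotalspaces} to get the $L$-equivalence. The first step (that $E \to \bar E$ is an $L$-equivalence) is fine and matches the paper's appeal to \cite[Theorem~1.24]{RSS}. The issue is in the descent step, where you reach for two facts that are genuinely different from, and stronger than, what the paper uses — and, as you yourself flag, they are not general theorems about reflective subuniverses.

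You choose to regard $x\mapsto L(\fib f x)$ as a family valued in $\UU_{L'}$ and then need $\UU_{L'}$ to be $L'$-local. But the family already lands in $\UU_L$, and the correct fact is \cite[Lemma~2.19]{CORS}: the universe $\UU_L$ of \emph{$L$-local} types is $L'$-local. That claim has a proof for arbitrary reflective subuniverses (e.g.\ for $L$-local $A,B$, the type $A\simeq B$ can be written as a pullback of $L$-local mapping types, hence is $L$-local), whereas the analogous claim for $\UU_{L'}$ runs the same argument only to conclude $A\simeq B$ is $L'$-local, which is not what is needed. Likewise, you need ``$\Sigma$ of $L'$-local fibers over an $L'$-local base is $L'$-local'' (i.e.\ $\Sigma$-closure of $L'$), whereas the paper needs only \cite[Lemma~2.21]{CORS}: a $\Sigma$-type with \emph{$L$-local} fibers over an $L'$-local base is $L'$-local. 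This weaker statement holds in general; full $\Sigma$-closure of $L'$ need not. The fix is simply to keep the descended family $P' : L'X \to \UU_L$ rather than $\UU_{L'}$, which is both what the paper does and what makes both lemmas applicable.

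Your fallback, ``both are instances of $\Sigma$-closure of the ambient subuniverse, which holds for $\degg(S)$-localization,'' does not close the gap. First, it is not established (and is not claimed in \cite{CORS}) that $\degg(S)$-localization is a modality; indeed the whole point of introducing the separated subuniverse $L'$ there is to work around the lack of $\Sigma$-closure for general accessible localizations. Second, even if it held, \cref{proposition:preservationfibersequences} is stated and used for an arbitrary reflective subuniverse $L$, so a proof specialized to $\degg(S)$ would not suffice (the later results need it via $L'$-localization, not only in the $\degg(S)$ case). So this is a real gap, but a small one: targeting $\UU_L$ instead of $\UU_{L'}$, and invoking the CORS lemmas in place of modality-style $\Sigma$-closure, repairs the argument and turns it into essentially the paper's proof.
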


\begin{proof}
For the proof it is more convenient to work with type families.
We start by constructing the map of fiber sequences.
Assume given a type $X$ and a type family $P : X \to \UU$, with total space $E \defeq \sm{x : X}P(x)$.
By \citep[Lemma~2.19]{CORS}, the composite $L \circ P : X \to \UU_L$ can be extended along the localization $\eta' : X \to L'X$ as follows:
\[
    \begin{tikzpicture}
      \matrix (m) [matrix of math nodes,row sep=2em,column sep=3em,minimum width=2em,nodes={text height=1.75ex,text depth=0.25ex}]
      { X & \UU \\
        L' X & \UU_L . \\};
      \path[->]
        (m-1-1) edge node [above] {$P$} (m-1-2)
                edge node [left] {$\eta'$} (m-2-1)
        (m-2-1) edge [dashed] node [above] {$P'$} (m-2-2)
        (m-1-2) edge node [right]{$L$} (m-2-2)
        ;
    \end{tikzpicture}
\]
In particular, we have $t : \prd{x : X} L(P(x)) \simeq P'(\eta' x)$ and
the localization units induce a map
\begin{align*}
    \sm{x : X} P(x) &\llra{l} \sm{y : L'X} P'(y)\\
             (x,p)  &\longmapsto (\eta' x, t(\eta p)).
\end{align*}
Define $E' \defeq \sm{y : L'X} P'(y)$. This gives us the map of fiber sequences.

Now, $E'$ is $L'$-local, by \citep[Lemma~2.21]{CORS}, so to conclude the proof
we must show that $l : E \to E'$ is an $L$-equivalence.
To see this observe that the $L$-localization of $l$ factors as the following chain of equivalences
\begin{align*}
    L\left(\sm{x : X} P(x)\right) &\simeq L\left(\sm{x : X} L(P(x))\right)\\
                       &\simeq L\left(\sm{x : X} P'(\eta' x)\right)\\
                       &\simeq L\left(\sm{y : L'X} P'(y)\right)
\end{align*}
using~\citep[Theorem~1.24]{RSS} in the first equivalence, the map $t$ in the second equivalence
and \cref{lemma:Lequivalencetotalspaces} in the third one.
\end{proof}

\begin{lem}\label{lemma:localizationpreserveslocal}
For $S : \N \to \N$ and $L$ any reflective subuniverse,
the $L'$-localization of a $\degg(S)$-local type is $\degg(S)$-local.
\end{lem}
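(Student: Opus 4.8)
\subsection*{Proof proposal}

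The plan is to reduce $\degg(S)$-locality to a property of loop spaces, verify that property on the image of the unit $\eta'$, and then propagate it. Recall that a type $W$ is $\degg(S)$-local exactly when, for every $w : W$ and every $n : \N$, the self-map $\mu^{W}_{w,n}$ of $\loopspacesym(W,w)$ given by precomposition with the degree-$S(n)$ self-map of $\sphere{1}$ is an equivalence: under $W^{\sphere{1}}\simeq\sm{w:W}\loopspacesym(W,w)$ this precomposition acts fiberwise over $W$, as the identity on the base and as $\mu^{W}_{w,n}$ on the fiber over $w$, and a fiberwise map is an equivalence iff it is so on each fiber. Since $\mu^{W}_{w,n}(\omega)$ is the $S(n)$-fold concatenate of $\omega$, and $\loopspacesym(h)$ is a group homomorphism for every pointed map $h$, the family $\mu$ is \emph{natural}: $\loopspacesym(h)\circ\mu^{W}_{w,n}=\mu^{W'}_{w',n}\circ\loopspacesym(h)$ for every $h:(W,w)\to_\sbt(W',w')$.

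The essential input I would use is twofold: that $\eta':X\to L'X$ is surjective, and that for each $x:X$ the map $\apfunc{\eta'}:\loopspacesym(X,x)\to\loopspacesym(L'X,\eta'(x))$ exhibits its target as the $L$-localization of its source. The second statement is the counterpart for $L'$ of the familiar identity $\loopspacesym\ttrunc{n+1}{X}\simeq\ttrunc{n}{\loopspacesym X}$; it is where the actual content lies, and I would take it from the theory of separated localizations of \cite{CORS} (it is in the same spirit as \cref{proposition:preservationfibersequences} applied to the based path fibration $\loopspacesym(X,x)\to PX\to X$, whose total space $PX$ is contractible). Granting it, observe that $\loopspacesym(L'X,\eta'(x))$ is $L$-local because $L'X$ is $L$-separated, so instantiating the naturality square at $h=\eta'$ (with pointings $x$ and $\eta'(x)$) gives $\mu^{L'X}_{\eta'(x),n}\circ\apfunc{\eta'}=\apfunc{\eta'}\circ\mu^{X}_{x,n}$; by uniqueness in the universal property of the $L$-localization $\apfunc{\eta'}$, this identifies $\mu^{L'X}_{\eta'(x),n}$ with the $L$-localization $L(\mu^{X}_{x,n})$ of $\mu^{X}_{x,n}$.

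The conclusion is then formal: since $X$ is $\degg(S)$-local, each $\mu^{X}_{x,n}$ is an equivalence, hence so is $L(\mu^{X}_{x,n})=\mu^{L'X}_{\eta'(x),n}$, because the reflector preserves equivalences; being an equivalence is a mere proposition, so surjectivity of $\eta'$ promotes this to the statement that $\mu^{L'X}_{y,n}$ is an equivalence for all $y:L'X$ and all $n:\N$, which by the first paragraph says exactly that $L'X$ is $\degg(S)$-local. The only non-bookkeeping step is the loop-space identification $\loopspacesym(L'X,\eta'(x))\simeq L\loopspacesym(X,x)$ together with its naturality in $x$; I expect this to be the main obstacle, everything else being routine manipulation of universal properties.
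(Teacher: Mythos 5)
Your proposal is correct and takes essentially the same approach as the paper: both arguments use surjectivity of $\eta'$ to reduce to base points of the form $\eta'(x)$, the key identification $L\loopspacesym(X,x)\simeq\loopspacesym(L'X,\eta'(x))$ compatible with the unit, and the naturality square identifying $k$ on $\loopspacesym L'X$ with $Lk$ on $L\loopspacesym X$, which is an equivalence since $L$ preserves equivalences. The paper states the square directly and checks commutativity by precomposing with the unit $\loopspacesym X\to L\loopspacesym X$, whereas you phrase the same step in terms of the universal property and naturality of the power map, but the content is identical.
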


\begin{proof}
Let $X$ be a $\degg(S)$-local type.
Fix $n : \N$ and let $k \equiv S(n)$.
We must show that $k : \loopspacesym L'X \to \loopspacesym L'X$
is an equivalence for each base point $x' : L'X$.
Since being an equivalence is a mere proposition, and the localization unit $X \to L'X$ is surjective \citep[Lemma~2.17]{CORS},
we can assume that $x' = \eta(x)$ for some $x : X$.
Consider the square
\[
    \begin{tikzpicture}
      \matrix (m) [matrix of math nodes,row sep=2.5em,column sep=3em,minimum width=2em,nodes={text height=1.75ex,text depth=0.25ex}]
      { L{\loopspacesym X} & \loopspacesym L'X \\
        L{\loopspacesym X} & \loopspacesym L'X . \\};
      \path[->]
        (m-1-1) edge node [left] {$Lk$} (m-2-1)
        (m-2-1) edge node [above] {$\sim$} (m-2-2)
        (m-1-2) edge node [right]{$k$} (m-2-2)
        (m-1-1) edge node [above] {$\sim$} (m-1-2)
        ;
    \end{tikzpicture}
\]
To show that the square commutes, it is enough to check it after precomposing
with the unit $\loopspacesym X \to L \loopspacesym X$, and this is clear.
Finally, the map on the left is an equivalence by hypothesis, and thus $L'X$ is $\degg(S)$-local.
\end{proof}

\subsection{Localization of homotopy groups}
\label{lochomgroups}

In this section we prove that $\degg(S)$-localization is well behaved when applied to nilpotent types.
In particular, we prove that $\degg(S)$-localization localizes the homotopy groups of a nilpotent type algebraically.

\begin{lem}\label{lemma:sigmaclosed}
    The total space of a fibration with $\degg(S)$-local fibers
    and simply connected $\degg(S)$-local base is $\degg(S)$-local.
\end{lem}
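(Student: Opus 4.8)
The plan is to reduce the statement to a localization-preserves-fibers argument combined with the characterization of $\degg(S)$-local types as those whose loop space is $S$-local. Suppose given a fibration $F \to E \xrightarrow{f} B$ with $B$ simply connected and $\degg(S)$-local, and with every fiber $\degg(S)$-local. I want to show $E$ is $\degg(S)$-local, i.e.\ that for each $n : \N$, the map $S(n) : \loopspacesym E \to \loopspacesym E$ is an equivalence, at every base point. Since being an equivalence is a mere proposition and (for the purposes of looping) we may work over the base point coming from the section of $f$ through the chosen point of $E$, it suffices to analyze $\loopspacesym E$ fiberwise.

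First I would use the long exact sequence of the fibration, or more precisely the fiber sequence $\loopspacesym F \to \loopspacesym E \to \loopspacesym B$, obtained by looping $F \to E \to B$ at the relevant base points. Since $B$ is simply connected, $\loopspacesym B$ is connected, and in fact one can do better: the fiber sequence $F \to E \to B$ with $B$ simply connected means that $F \to E$ is surjective on $\pi_0$ is automatic, but the key point is that over a simply connected base, the map $\loopspacesym E \to \loopspacesym B$ has a connected base, so $\loopspacesym E$ sits in a fibration with fiber $\loopspacesym F$ and base $\loopspacesym B$. Now both $\loopspacesym F$ (since $F$ is $\degg(S)$-local, so $\loopspacesym F$ is $S$-local) and $\loopspacesym B$ (since $B$ is $\degg(S)$-local, and moreover simply connected so $\loopspacesym B$ is itself $\degg(S)$-local with connected base) are controlled. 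The multiplication-by-$S(n)$ self-map of $\loopspacesym E$ is compatible, via the functoriality of $\loopspacesym$ applied to the maps in the fiber sequence, with multiplication by $S(n)$ on $\loopspacesym F$ and on $\loopspacesym B$: more precisely, looping the fiber sequence once more we get a map of fiber sequences $(S(n))_* : (\loopspacesym\loopspacesym F \to \loopspacesym\loopspacesym E \to \loopspacesym\loopspacesym B) \to (\text{itself})$, where the outer maps are equivalences (the fiber by hypothesis that $F$ is $\degg(S)$-local, hence $\loopspacesym F$ is $S$-local hence $\degg(S)$-local; the base because $\loopspacesym B$ is $\degg(S)$-local, using simple connectivity of $B$). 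By the five lemma for fiber sequences (equivalently, by comparing fibers and using \cite[Theorem~8.8.2]{hottbook}-type connectivity arguments), the middle map is an equivalence.

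More carefully, the cleanest route avoids iterated looping: apply $\loopspacesym$ once to get the fiber sequence $\loopspacesym F \to \loopspacesym E \to \loopspacesym B$, and observe that $\loopspacesym E$ is itself the total space of a fibration over the connected type $\loopspacesym B$ with fiber the $\degg(S)$-local type $\loopspacesym F$ (note $\loopspacesym F$ is $\degg(S)$-local because $F$ is $\degg(S)$-local: a type is $\degg(S)$-local iff its loop spaces are $S$-local at every point, and a loop space of a $\degg(S)$-local type is again $\degg(S)$-local). Moreover $\loopspacesym B$ is $\degg(S)$-local, and since $B$ is simply connected, $\loopspacesym B$ is connected; in fact, since $B$ is simply connected and $\degg(S)$-local, $\loopspacesym B$ is connected and $\degg(S)$-local, hence simply connected-like enough... but this is exactly where I need to be careful, and it suggests an induction. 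So I would instead prove the statement by downward induction on truncation level in the truncated case first, and then pass to the limit, OR set up the argument so that the fiberwise criterion for $\degg(S)$-locality applies directly: namely, show that for every base point $e : E$, and every $n$, the map $S(n) : \loopspacesym(E,e) \to \loopspacesym(E,e)$ is an equivalence, by writing $\loopspacesym(E,e)$ as the total space of the fibration with fiber $\loopspacesym(F, \text{pt})$ over $\loopspacesym(B, f(e))$, noting that $\loopspacesym(B,f(e))$ is $\degg(S)$-local (as $B$ is) with $S(n)$ an equivalence on it, and $\loopspacesym(F,\text{pt})$ has $S(n)$ an equivalence on it (as $F$ is $\degg(S)$-local), and then concluding via the map of fiber sequences that $S(n)$ is an equivalence on $\loopspacesym(E,e)$ by comparing the induced maps on fibers and bases — here using that an equivalence of fiber sequences can be detected on fibers and total-or-base (two-out-of-three for fiber sequences).

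The main obstacle I anticipate is the connectivity bookkeeping needed to legitimately apply the ``two-out-of-three for fiber sequences'' step: one needs the base $\loopspacesym B$ (or $B$ itself, in the unlooped formulation) to be connected so that detecting an equivalence on all fibers plus the base gives an equivalence on the total space — this is why the simple connectivity hypothesis on $B$ is essential, and I should be careful whether simple connectivity of $B$ is exactly what is needed or whether mere connectivity of $\loopspacesym B$ suffices (it does, and that follows from $B$ simply connected). A secondary point to handle cleanly is the passage from ``$F$ is $\degg(S)$-local'' to ``$\loopspacesym F$ has $S(n)$ an equivalence,'' which is immediate from the definition of $\degg(S)$-local, and the analogous statement for $B$; and finally the observation that $S(n)$ acting on $\loopspacesym E$ is compatible with its action on $\loopspacesym F$ and $\loopspacesym B$ through the maps of the fiber sequence, which is just functoriality of $\loopspacesym$ and $\loopspacesym^2$.
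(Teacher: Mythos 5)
Your proposal is correct and takes essentially the same route as the paper: loop the fibration once to get $\loopspacesym F \to \loopspacesym E \to \loopspacesym B$, view $S(n)$ as a self-map of this fiber sequence, note that it is an equivalence on the base (by $\degg(S)$-locality of $B$) and on the fiber over $\refl{}$ (by $\degg(S)$-locality of $F$), and use connectivity of $\loopspacesym B$ — which is exactly where simple connectivity of $B$ enters — to upgrade the single-fiber check to a fiberwise equivalence and hence an equivalence on $\loopspacesym E$. The early detour through iterated looping and a five-lemma argument is unnecessary, as you yourself note, and the final argument you settle on matches the paper's.
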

\begin{proof}
    Let $P : B \to \UU_{\degg(S)}$ be a fibration over a simply connected, $\degg(S)$-local type.
    We will prove that for every $x : \sm{b : B}P(b)$ and every $k : \N$,
    the map $S(k) : \loopspacesym\left(\sm{b : B}P(b), x\right) \to \loopspacesym\left(\sm{b : B}P(b), x\right)$
    is an equivalence.
    The point $x : \sm{b : B}P(b)$ corresponds to a point $b : B$ together with a point $f : P(b)$ in its fiber,
    and thus induces a fiber sequence
    \[
        \loopspacesym(P(b),f) \to \loopspacesym\left(\sm{b : B}P(b), x\right) \to \loopspacesym(B,b),
    \]
    where, as usual, the fiber is taken to be the fiber over $\refl{b} : \loopspacesym(B,b)$.
    By naturality, this fiber sequence maps to itself by the map $S(k)$.
    By hypothesis, this map of fiber sequences is an equivalence for the base, and thus it is an equivalence for the total
    space if and only if it is a fiberwise equivalence.
    Since the base is connected by hypothesis, it is enough to check that $S(k)$ is an equivalence for
    the fiber over $\refl{b} : \loopspacesym(B,b)$. And this last fact holds by hypothesis.
\end{proof}

\begin{prop}\label{proposition:preservesprincipalfibrations}
    Let $n\geq 1$ and let $Y \to X$ be a pointed principal $K(A,n)$-bundle.
    In particular, we have a classifying map $X \to K(A,n+1)$, and fiber sequences
    $K(A,n) \to Y \to X$ and $Y \to X \to K(A,n+1)$.
    Then, $\degg(S)$-localization preserves both fiber sequences.
    Moreover, the localized map $L_{\degg(S)} Y \to L_{\degg(S)} X$ is a pointed principal $K(A',n)$-bundle,
    where $A'$ is the algebraic localization of $A$.
\end{prop}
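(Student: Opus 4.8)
The plan is to reduce the whole statement to the behaviour of $L \defeq L_{\degg(S)}$ on the classifying map $f' : X \to K(A,n+1)$ of the bundle, using \cref{proposition:preservationfibersequences} and \cref{lemma:sigmaclosed}. I will freely use the computation of the $\degg(S)$-localization of an Eilenberg-Mac Lane space: for $m \geq 1$ and $C$ abelian, $L\,K(C,m) \simeq K(C',m)$, where $C'$ is the algebraic localization of $C$, and the unit $K(C,m) \to L\,K(C,m)$ induces the algebraic localization $C \to C'$ on $\pi_m$ (this is in \cite{CORS}; for $m \geq 2$ one can also recover it from \cref{proposition:L'simplyconnected}).

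First I would prove that $L$ preserves the fiber sequence $Y \to X \xrightarrow{f'} K(A,n+1)$. Present $X$ as the total space $\sm{b : K(A,n+1)}\fibb{f'}(b)$ and apply \cref{proposition:preservationfibersequences} with base $K(A,n+1)$ and type family $\fibb{f'}$: it yields a type family $P' : L'K(A,n+1) \to \UU_L$ with $P'(\eta'(b)) \simeq L(\fibb{f'}(b))$, the total space $E' \defeq \sm{y : L'K(A,n+1)} P'(y)$, and a map of fiber sequences from $Y \to X \to K(A,n+1)$ into $LY \to E' \to L'K(A,n+1)$ in which $E'$ is $L'$-local and $X \to E'$ is an $L$-equivalence. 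Since $K(A,n+1)$ is simply connected, \cref{proposition:L'simplyconnected} gives $L'K(A,n+1) \simeq L\,K(A,n+1) \simeq K(A',n+1)$, which is a simply connected, $\degg(S)$-local type; and $P'$ takes values in $L$-local, hence $\degg(S)$-local, types by construction. So \cref{lemma:sigmaclosed} applies to the fibration $E' \to L'K(A,n+1)$ and shows that $E'$ is $\degg(S)$-local, i.e.\ $L$-local. An $L$-equivalence into an $L$-local type is an $L$-localization, so $X \to E'$ identifies $E' \simeq LX$, and by the universal property of localization the bottom row of the map of fiber sequences becomes $LY \xrightarrow{Lf} LX \xrightarrow{Lf'} K(A',n+1)$. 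Its fiber over the base point is $P'(\eta'(b_0)) \simeq L(\fibb{f'}(b_0)) \simeq LY$, with the comparison map $Y \to LY$ equal to the unit; this is exactly the assertion that $L$ preserves the fiber sequence $Y \to X \to K(A,n+1)$, and it also exhibits $Lf : LY \to LX$ as the fiber inclusion of $Lf' : LX \to K(A',n+1)$.

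Being the fiber of a map into $K(A',n+1)$, $Lf$ is a pointed principal $K(A',n)$-bundle (everything in sight is pointed), which is the ``moreover'' clause. For the fiber sequence $K(A,n) \to Y \to X$, recall that extending the fiber sequence of $f'$ one step to the left gives $\loopspacesym K(A,n+1) \to Y \xrightarrow{f} X$, so the identification of the fiber of $f$ with $K(A,n)$ is the canonical $K(A,n) \simeq \loopspacesym K(A,n+1)$; likewise $\fib{Lf}{\eta(x_0)} \simeq \loopspacesym K(A',n+1) \simeq K(A',n)$, and by naturality of the fiber sequence $\loopspacesym B \to \fibb{g}(b_0) \to E$ in the map $g$, the comparison map $\fib{f}{x_0} \to \fib{Lf}{\eta(x_0)}$ is identified with $\loopspacesym$ applied to the unit $\eta : K(A,n+1) \to K(A',n+1)$. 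Since $\eta$ induces the algebraic localization $A \to A'$ on $\pi_{n+1}$, the map $\loopspacesym\eta : K(A,n) \to K(A',n)$ induces $A \to A'$ on $\pi_n$; by the universal property of Eilenberg-Mac Lane spaces it is then pointed homotopic to the unit $K(A,n) \to L\,K(A,n) \simeq K(A',n)$, hence is an $L$-localization. This gives preservation of the fiber sequence $K(A,n) \to Y \to X$ and completes the proof.

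The main obstacle is the fiber sequence $Y \to X \to K(A,n+1)$: since $\degg(S)$-localization is not left exact, preservation is not formal, and the argument hinges on the base $K(A,n+1)$ being simply connected --- this is what makes $L'$ and $L$ agree on the base (\cref{proposition:L'simplyconnected}) and what lets \cref{lemma:sigmaclosed} recognize the $L'$-local total space $E'$ produced by \cref{proposition:preservationfibersequences} as the genuine localization $LX$. Once that is established, the bundle statement and the fiber sequence $K(A,n) \to Y \to X$ are routine manipulations with loop spaces and with the known value of $L$ on Eilenberg-Mac Lane spaces.
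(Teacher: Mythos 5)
Your proof is correct and follows essentially the same route as the paper's: reduce to preservation of the fiber sequence $Y \to X \to K(A,n+1)$, apply \cref{proposition:preservationfibersequences} to obtain an $L'$-local total space $E'$ over $L'K(A,n+1)$, identify $L'K(A,n+1)$ with $K(A',n+1)$ via \cref{proposition:L'simplyconnected}, and then invoke \cref{lemma:sigmaclosed} (with the simply connected, $\degg(S)$-local base) to recognize $E'$ as $L_{\degg(S)}X$. The only difference is organizational: the paper front-loads the reduction by citing the CORS results on looping the localization unit of an Eilenberg-Mac Lane space, whereas you derive the bundle claim and the preservation of $K(A,n)\to Y\to X$ explicitly afterwards; the technical content is the same.
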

\begin{proof}
    Notice that it is enough to show that $\degg(S)$-localization preserves the fiber sequence $Y \to X \to K(A,n+1)$.
    This is because, if $A'$ is the algebraic localization of
    $A$ away from $S$, looping the localization map $K(A,n+1) \to K(A',n+1)$ 
    gives us the localization map $K(A,n) \to K(A',n)$ by \citep[Corollary~4.13]{CORS} and \citep[Theorem~4.22]{CORS}.
    This implies that $L_{\degg(S)} Y \to L_{\degg(S)} X$ is classified by a map $L_{\degg(S)} X \to K(A', n+1)$,
    and that $\degg(S)$-localization preserves the fibration $K(A,n) \to Y \to X$.

    Consider the fiber sequence $Y \to X \to K(A,n+1)$.
    Using \citep[Lemma~2.15]{CORS} and applying \cref{proposition:preservationfibersequences} to this fiber sequence we get a map
    of fiber sequences
    \begin{center}
        \begin{tikzpicture}
            \matrix (m) [matrix of math nodes,row sep=2em,column sep=2em,minimum width=2em,nodes={text height=1.75ex,text depth=0.25ex}]
          { Y & X & K(A,n+1) \\
            L_{\degg(S)} Y & X' & L_{\susp \degg(S)} K(A,n+1), \\};
          \path[-stealth]
            (m-1-1) edge node [above] {} (m-1-2)
                    edge node [left] {} (m-2-1)
            (m-2-1) edge node [above] {} (m-2-2)
            (m-1-2) edge node [right]{} (m-1-3)
                    edge node [left] {} (m-2-2)
            (m-2-2) edge node [right]{} (m-2-3)
            (m-1-3) edge node [above] {} (m-2-3)
            ;
        \end{tikzpicture}
    \end{center}
    where the first two vertical maps are $\degg(S)$-equivalences.
    Since $K(A,n+1)$ is simply connected, the third vertical map is actually a $\degg(S)$-localization by \citep[Theorem~4.11]{CORS},
    so it is enough to show that $X'$ is $\degg(S)$-local.
    To see this, use again the fact that $K(A,n+1)$ is simply connected, and that
    $\degg(S)$-localization preserves simply connectedness \citep[Corollary~4.14]{CORS}, to deduce that the bottom fiber sequence
    satisfies the hypothesis of \cref{lemma:sigmaclosed}.
    We thus see that $L_{\degg(S)}Y \to L_{\degg(S)}X$ is the fiber of 
    the localization of the classifying map $X \to K(A,n+1)$, as needed.
\end{proof}

    Notice that the above proof can be used to prove a slight strengthening of \citep[Theorem~4.16]{CORS}, namely, that
    $\degg(S)$-localization preserves fiber sequences with simply connected base.

    As an application of \cref{proposition:preservesprincipalfibrations}, we give a concrete
    construction of the $\degg(S)$-localization of a pointed,
    truncated type with a nilpotent structure.

% right before
% right after
\begin{defn}\label{remark:localizationtruncated}
    We give an inductive construction of the $\degg(S)$-localization of
    a pointed, truncated type with a nilpotent structure.
    For the inductive step, this construction uses \cref{proposition:preservesprincipalfibrations}
    and the $\degg(S)$-localization of Eilenberg--Mac Lane spaces of abelian groups, which is done in \citep[Section~4.5]{CORS}.
    This construction is analogous to the classical one (e.g.~\citep{MayPonto}).

    Given a pointed, truncated type $Y$ with a nilpotent structure,
    we induct over the nilpotency degree of $Y$.
    If $Y$ is a point, then it is its own $\degg(S)$-localization.
    If $Y$ is the fiber of a map $Y' \to K(A,n+1)$, then we localize the map, by applying the inductive
    hypothesis to $Y'$ and the localization of Eilenberg--Mac Lane spaces to $K(A,n+1)$.
    By \cref{proposition:preservesprincipalfibrations}, $\degg(S)$-localization preserves the fiber of the
    localized map, so the localization of $Y$ is just the fiber of the localized map.
\end{defn}

\begin{cor}\label{corollary:localizationpreservestruncated}
    For $n \geq 2$, $\degg(S)$-localization of a pointed, $n$-truncated, nilpotent type is $n$-truncated.
\end{cor}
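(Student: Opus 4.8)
The plan is to run an induction along the tower of principal fibrations exhibiting the nilpotent structure, following the explicit stage-by-stage construction of $\degg(S)$-localization recorded in \cref{remark:localizationtruncated}. Since ``$L_{\degg(S)} Z$ is $n$-truncated'' is a mere proposition, I would first replace the hypothesis ``$Z$ merely has a nilpotent structure'' by ``$Z$ comes equipped with a nilpotent structure''. That nilpotent structure presents $Z$ as built from $\unit$ by finitely many principal fibrations, so \cref{remark:localizationtruncated} produces $L_{\degg(S)} Z$ as the last term of a sequence $\unit \equiv W_0, W_1, \dots, W_N \equiv L_{\degg(S)} Z$ in which each $W_{j+1}$ is the fiber of a pointed map out of $W_j$.

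The inductive invariant is that $W_j$ is $n$-truncated. The base case $W_0 \equiv \unit$ is immediate. For the inductive step, $W_{j+1}$ is the fiber of a pointed map $W_j \to K(A', m+1)$, where $A'$ is the algebraic localization of an abelian group $A$ that occurs as a principal-fibration fiber in the chosen nilpotent structure for $Z$; this is exactly the content of \cref{proposition:preservesprincipalfibrations}, since $\degg(S)$-localization preserves the fiber sequence $W_{j+1} \to W_j \to K(A, m+1)$. The key bookkeeping observation is the bound $1 \le m \le n$: the classifying maps appearing in a nilpotent structure for an $n$-truncated type refine the Postnikov maps $\ttrunc{i+1}{Z} \to \ttrunc{i}{Z}$ with $i+1 \le n$, so their codomains are of the form $K(A, i+2)$ with $i+2 \le n+1$. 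Hence $K(A', m+1)$ is $(m+1)$-truncated, and in particular $(n+1)$-truncated.

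It then remains to check the elementary fact that the fiber of a pointed map $f : E \to B$ with $E$ an $n$-type and $B$ an $(n+1)$-type is again an $n$-type: since $\fib{f}{b} \equiv \sm{e : E}(f(e) =_B b)$, the base $E$ is $n$-truncated and each identity type $f(e) =_B b$ is $n$-truncated because $B$ is $(n+1)$-truncated, and a $\Sigma$-type of $n$-types over an $n$-type is an $n$-type. Applying this with $E \equiv W_j$ and $B \equiv K(A', m+1)$ closes the induction, so $W_N \equiv L_{\degg(S)} Z$ is $n$-truncated. I do not expect a serious obstacle here; the argument is essentially invariant-maintenance over the construction of \cref{remark:localizationtruncated}, and the only point requiring care is confirming the truncation-level bound $m \le n$ for every Eilenberg--Mac Lane space appearing in the tower, so that no stage acquires a homotopy group above level $n$.
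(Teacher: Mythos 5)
Your proof is correct and follows essentially the same approach as the paper, which simply observes that the stagewise construction of \cref{remark:localizationtruncated} (after assuming a nilpotent structure, which is permissible since $n$-truncatedness is a mere proposition) yields an $n$-truncated type. You have merely spelled out the invariant the paper leaves implicit: that each stage of the tower is the fiber of a map into some $K(A',m+1)$ with $m+1\le n+1$, and that fibers of maps from $n$-types to $(n+1)$-types are $n$-types.
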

\begin{proof}
    Since being $n$-truncated is a mere proposition, we can assume that we have a nilpotent structure for the nilpotent type.
    The construction of \cref{remark:localizationtruncated} finishes the proof.
\end{proof}

Although we haven't yet proven that the $\degg(S)$-localization of a nilpotent type is nilpotent,
we can already prove the following fact.

\begin{cor}\label{corollary:commutativitylocalizationtruncation}
    For $n\geq 2$, $n$-truncation and $\degg(S)$-localization commute, when restricted to pointed, nilpotent types.
\end{cor}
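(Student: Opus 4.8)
The plan is to show that the two natural comparison maps between $L\ttrunc{n}{X}$ and $\ttrunc{n}{LX}$ — where I abbreviate $L \defeq L_{\degg(S)}$ — are mutually inverse equivalences. Concretely, one comparison $\phi : L\ttrunc{n}{X} \to \ttrunc{n}{LX}$ is obtained by factoring $\ttrunc{n}{\eta} : \ttrunc{n}{X} \to \ttrunc{n}{LX}$ (the $n$-truncation of the localization unit $\eta : X \to LX$) through the localization unit $\ttrunc{n}{X} \to L\ttrunc{n}{X}$, and the other comparison $\psi : \ttrunc{n}{LX} \to L\ttrunc{n}{X}$ is obtained by factoring $L(|\blank|_n) : LX \to L\ttrunc{n}{X}$ through the truncation unit $LX \to \ttrunc{n}{LX}$. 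Since being an equivalence is a mere proposition there is no harm in assuming $X$ is pointed (it is already connected).

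Two facts make these factorizations available and form the backbone of the argument. First, $\ttrunc{n}{X}$ is nilpotent by \cref{example:truncofnilp} and $n$-truncated, so by \cref{corollary:localizationpreservestruncated} the type $L\ttrunc{n}{X}$ is $n$-truncated; this is the only use of nilpotency, and it is exactly why the statement is restricted to nilpotent types. Second, $\ttrunc{n}{LX}$ is $\degg(S)$-local: $LX$ is $\degg(S)$-local, and for any $\degg(S)$-local type $Z$ the type $\ttrunc{n}{Z}$ is again $\degg(S)$-local, since for a base point of $\ttrunc{n}{Z}$ — which we may assume comes from $Z$, as the truncation map is surjective and being an equivalence is a proposition — the standard equivalence $\loopspacesym\ttrunc{n}{Z} \simeq \ttrunc{n-1}{\loopspacesym Z}$ is compatible with concatenation and hence identifies the map $S(k)$ on $\loopspacesym\ttrunc{n}{Z}$ with the $(n-1)$-truncation of the map $S(k)$ on $\loopspacesym Z$, which is an equivalence by hypothesis; and truncation preserves equivalences.

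With $\phi$ and $\psi$ in hand, I would check $\psi\circ\phi = \idfunc{}$ and $\phi\circ\psi = \idfunc{}$ by a routine diagram chase using only the defining factorizations of $\phi$ and $\psi$ together with naturality of the two units. Both composites, and the respective identities, become equal after precomposing with an appropriate unit: for $\psi\circ\phi : L\ttrunc{n}{X} \to L\ttrunc{n}{X}$ one precomposes with the localization unit $\ttrunc{n}{X} \to L\ttrunc{n}{X}$ and concludes using that the target is $\degg(S)$-local, and for $\phi\circ\psi : \ttrunc{n}{LX} \to \ttrunc{n}{LX}$ one precomposes with the truncation unit $LX \to \ttrunc{n}{LX}$ and concludes using that the target is $n$-truncated; along the way one also uses that $\ttrunc{n}{LX}$ is itself $\degg(S)$-local to transport maps across $\eta$. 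I do not anticipate a genuine obstacle: the content is entirely in the two backbone facts, the first of which is quoted and the second of which is immediate from the loop-space characterization of $\degg(S)$-locality; the only thing to watch is keeping track of which universal property — $\degg(S)$-locality or $n$-truncatedness — is being invoked at each step of the final chase.
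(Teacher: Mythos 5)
Your proof is correct and rests on exactly the same two facts the paper uses: that the $n$-truncation of a $\degg(S)$-local type is again $\degg(S)$-local (the paper cites \cite[Lemma~4.17]{CORS}, which you re-derive via $\loopspacesym\ttrunc{n}{Z}\simeq\ttrunc{n-1}{\loopspacesym Z}$), and that $\degg(S)$-localization of a pointed $n$-truncated nilpotent type is again $n$-truncated (\cref{corollary:localizationpreservestruncated}). The only difference is bookkeeping: the paper invokes \cite[Lemma~2.11]{CORS} to get $\ttrunc{n}{L_{\degg(S)}X}\simeq\ttrunc{n}{L_{\degg(S)}\ttrunc{n}{X}}$ directly and then drops the now-redundant outer truncation, whereas you unfold that lemma into explicit comparison maps $\phi$, $\psi$ and a universal-property chase; your version is more self-contained, the paper's is shorter. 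One small inaccuracy in your sketch of the chase: to reduce $\psi\circ\phi=\idfunc{}$ you first precompose with the localization unit and then also with the truncation unit (so both the $\degg(S)$-locality and the $n$-truncatedness of $L_{\degg(S)}\ttrunc{n}{X}$ get used), and symmetrically for $\phi\circ\psi$ — it is not one universal property per composite — but you flag this caveat yourself and the chase does go through.
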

\begin{proof}
    From \citep[Lemma~4.17]{CORS} we know that the $n$-truncation of a $\degg(S)$-local type is $\degg(S)$-local.
    It follows from \citep[Lemma~2.11]{CORS} that for any type $X$ we have $\ttrunc{n}{L_{\degg(S)}X}\simeq\ttrunc{n}{L_{\degg(S)}\ttrunc{n}{X}}$.
    The result follows from observing that \cref{corollary:localizationpreservestruncated} implies that,
    if $X$ is nilpotent, we have $\ttrunc{n}{L_{\degg(S)}\ttrunc{n}{X}} \simeq L_{\degg(S)}\ttrunc{n}{X}$.
\end{proof}

Our goal now is to show that $\degg(S)$-localization preserves composites of principal fibrations.
To this end, we prove a slightly more general result.

\begin{lem}\label{lemma:inducedfibseqisprincipal}
    Given pointed, composable maps $Y \xrightarrow{f} Y' \xrightarrow{g} Y''$, if $f$ is a pointed principal $K(A,n)$-bundle,
    then the map $\fib{f}{y'} \to \fib{g\circ f}{g(y')}$ of \cref{remark:fibseqfibers} is also a pointed principal $K(A,n)$-bundle.
\end{lem}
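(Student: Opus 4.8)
The plan is to exhibit the map $\fib{f}{y'}\to\fib{g\circ f}{g(y')}$ of \cref{remark:fibseqfibers} as the fiber inclusion of a concrete pointed principal $K(A,n)$-bundle, and then to show that this fiber inclusion is itself such a bundle. Write $N\defeq\fib{g}{g(y')}$, pointed at $(y',\refl{})$, and let $j\colon\fib{g\circ f}{g(y')}\to N$ denote the second map of the fiber sequence of \cref{remark:fibseqfibers}, so that $\fib{f}{y'}\to\fib{g\circ f}{g(y')}$ is the inclusion of the fiber of $j$ over $(y',\refl{})$.

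First I would identify $j$ together with its classifying data. Since $f$ is a pointed principal $K(A,n)$-bundle it is, by definition, the fiber of a pointed map $f'\colon Y'\to K(A,n+1)$; that is, we have a pointed pullback square with $Y$ and $\unit$ at opposite corners and $f'$ along the bottom. Inspecting the construction in \cref{remark:fibseqfibers} one sees that $\fib{g\circ f}{g(y')}$ is equivalent, over $N$, to the pullback of $f$ along the first projection $p_1\colon N\to Y'$; pulling the square for $f$ back along $p_1$ and pasting pullbacks identifies $j$ with the fiber of the pointed composite $\psi\defeq f'\circ p_1\colon N\to K(A,n+1)$. Hence $j$ is a pointed principal $K(A,n)$-bundle, and we obtain a pointed fiber sequence $\fib{g\circ f}{g(y')}\xrightarrow{j}N\xrightarrow{\psi}K(A,n+1)$.

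Rotating this fiber sequence once yields $\loopspacesym K(A,n+1)\to\fib{g\circ f}{g(y')}\xrightarrow{j}N$; since both $\fib{f}{y'}$ and $\loopspacesym K(A,n+1)$ are the fiber of $j$ over $(y',\refl{})$, this identifies our map $\fib{f}{y'}\to\fib{g\circ f}{g(y')}$ with the connecting map $\loopspacesym K(A,n+1)\to\fib{g\circ f}{g(y')}$, compatibly with the pointed equivalence $\fib{f}{y'}\simeq\loopspacesym K(A,n+1)$ (and $\loopspacesym K(A,n+1)\simeq K(A,n)$ as pointed types, through the equivalence $\EM_\sbt(A,n)\simeq\EM_\sbt(A,n+1)$). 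It remains to show that this connecting map is a pointed principal $K(A,n)$-bundle, i.e.\ to produce a pointed map $\fib{g\circ f}{g(y')}\to K(A,n+1)$ whose fiber is this connecting map. For this I would use the hands-on description of \cref{remark:descriptionbundles}: $\fib{g\circ f}{g(y')}$, as the fiber of $\psi$, is the total space $\sm{b:N}(\psi(b)=\ast)$, and one builds a classifying map $\fib{g\circ f}{g(y')}\to\EM_{\sbt\sbt}(A,n+1)$ from the tautological path recorded in the second coordinate together with the point of $N$, checking via \cref{proposition:characterizationF} and \cref{proposition:functoriality} that the associated bundle is exactly the connecting map.

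The step I expect to be the main obstacle is this last one: showing that the fiber inclusion of the principal fibration $j$ is again a principal fibration, by producing and verifying its classifying map. This is precisely the kind of computation that \cref{proposition:characterizationF} and the description of $K(-,n)$-bundles in \cref{remark:descriptionbundles} are designed to make tractable, reducing the question to one about the doubly-pointed Eilenberg--Mac Lane space $\ttrunc{n+1}{\suspsym\fib{f}{y'}}$ and its sections over $\fib{g\circ f}{g(y')}$; the remaining checks --- pointedness, and that the fiber over the base point of $K(A,n+1)$ is $K(A,n)$ as a pointed type --- are routine.
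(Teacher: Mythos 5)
Your second paragraph is, in substance, the paper's entire proof: the paper takes fibers of the square formed by the classifying map $f' : Y' \to K(A,n+1)$ and $g : Y' \to Y''$, which amounts to the same pullback-pasting you perform, and concludes that $\fib{g\circ f}{g(y')}$ is the fiber of $\psi \defeq f'\circ p_1 : \fib{g}{g(y')} \to K(A,n+1)$, the fiber sequence of \cref{remark:fibseqfibers} being the left extension of $\fib{g\circ f}{g(y')} \xrightarrow{j} \fib{g}{g(y')} \xrightarrow{\psi} K(A,n+1)$ (which is what identifies $\fib{f}{y'}$ with $\loopspacesym K(A,n+1) \simeq K(A,n)$). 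That is where the paper stops, because that is the content actually used later: in \cref{proposition:preservesfibercomposite} the lemma is invoked to know that $j$ is a pointed principal $K(A,n)$-bundle classified by a map $\fib{g}{g(y')} \to K(A,n+1)$, so that \cref{proposition:preservesprincipalfibrations} applies to it. In other words, the map the lemma is really about is $\fib{g\circ f}{g(y')} \to \fib{g}{g(y')}$; the arrow $\fib{f}{y'} \to \fib{g\circ f}{g(y')}$ printed in the statement is a slip.

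The step you flag as the main obstacle --- producing a classifying map $\fib{g\circ f}{g(y')} \to K(A,n+1)$ whose fiber inclusion is the connecting map $\fib{f}{y'} \to \fib{g\circ f}{g(y')}$ --- is not merely hard, it is impossible in general, because that connecting map is not a $K(A,n)$-bundle: extending the long fiber sequence one more step to the left, its fiber over the base point is $\loopspacesym \fib{g}{g(y')}$, not $K(A,n)$. Concretely, take $g \defeq \idfunc{Y'}$; then $\fib{g}{g(y')}$ is contractible and the map in question is an equivalence of $\fib{f}{y'} \simeq K(A,n)$ with itself, whose fibers are contractible, so for nontrivial $A$ it is not a $K(A,n)$-bundle and a fortiori not principal. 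Hence no choice of ``tautological'' classifying map out of $\sm{b : N} (\psi(b) = \ast)$ can make your last step succeed. Keep your second paragraph (together with the identification of $\fib{f}{y'}$ with $K(A,n)$ via the left extension) and discard the rest: with the statement read as asserting that $\fib{g\circ f}{g(y')} \to \fib{g}{g(y')}$ is a pointed principal $K(A,n)$-bundle with fiber $\fib{f}{y'}$, your argument is complete and coincides with the paper's.
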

\begin{proof}
    Let $f' : Y' \to K(A,n+1)$ classify $f$. 
    By taking fibers of all the maps in the square
    \begin{center}
        \begin{tikzpicture}
          \matrix (m) [matrix of math nodes,row sep=1.5em,column sep=1.5em,minimum width=2em,nodes={text height=1.75ex,text depth=0.25ex}]
          { Y' & K(A,n+1) \\
            Y'' & \unit \\};
          \path[-stealth]
            (m-1-1) edge node [above] {$f'$} (m-1-2)
                    edge node [left] {$g$} (m-2-1)
            (m-2-1) edge node [above] {} (m-2-2)
            (m-1-2) edge node [right]{} (m-2-2)
            ;
        \end{tikzpicture}
    \end{center}
    we obtain a diagram equivalent to the following
    \begin{center}
        \begin{tikzpicture}
          \matrix (m) [matrix of math nodes,row sep=1.5em,column sep=1.5em,minimum width=2em]
          { \fib{g\circ f}{g(y')} & \fib{g}{g(y')} & K(A,n+1)\\
            Y & Y' & K(A,n+1) \\
            Y'' & Y'' & \unit. \\};
          \path[-stealth]
            (m-1-1) edge node [above] {} (m-1-2)
                    edge node [left] {} (m-2-1)
            (m-2-1) edge node [above] {$f$} (m-2-2)
                    edge node [left] {$g\circ f$} (m-3-1)
            (m-1-2) edge node [right]{} (m-2-2)
                    edge node [above] {} (m-1-3)
            (m-2-2) edge node [left] {$g$} (m-3-2)
                    edge node [above] {$f'$} (m-2-3)
            (m-3-1) edge [double equal sign distance,-] node [left] {} (m-3-2)
            (m-3-2) edge node [left] {} (m-3-3)
            (m-1-3) edge [double equal sign distance,-] node [left] {} (m-2-3)
            (m-2-3) edge node [left] {} (m-3-3)
            ;
        \end{tikzpicture}
    \end{center}
    Now notice that the induced fiber sequence of \cref{remark:fibseqfibers} comes from extending the top fiber sequence to the left.
\end{proof}

\begin{prop}\label{proposition:preservesfibercomposite}
    Given pointed, composable maps $Y \xrightarrow{f} Y' \xrightarrow{g} Y''$,
    if $\degg(S)$-localization preserves the fiber of $g$, and $f$
    is a pointed principal $K(-,n)$-bundle, then $\degg(S)$-localization preserves the fiber of $g \circ f$.
\end{prop}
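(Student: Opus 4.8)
The plan is to realize both $\fib{g \circ f}{y''_0}$ and $\fib{L(g\circ f)}{\eta y''_0}$ as pointed principal $K(-,n)$-bundles — over $\fib{g}{y''_0}$ and over $\fib{Lg}{\eta y''_0}$ respectively — and then to deduce the claim from the hypothesis together with \cref{proposition:preservesprincipalfibrations}. Throughout write $L \defeq L_{\degg(S)}$, let $y_0 : Y$, $y'_0 : Y'$, $y''_0 : Y''$ be the base points (so $f(y_0) = y'_0$ and $g(y'_0) = y''_0$), and, since $f$ is principal, fix a pointed map $f' : Y' \to K(A, n+1)$ classifying it.

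First I would assemble the relevant fiber sequences. Applying \cref{remark:fibseqfibers} to $Y \xrightarrow{f} Y' \xrightarrow{g} Y''$ at the point $y'_0$ yields a fiber sequence $\fib{f}{y'_0} \to E \to B$ with $E \defeq \fib{g\circ f}{y''_0}$ and $B \defeq \fib{g}{y''_0}$ (using $g(y'_0) = y''_0$); since $f$ is a pointed principal $K(A,n)$-bundle, \cref{lemma:inducedfibseqisprincipal} shows that $E \to B$ is a pointed principal $K(A,n)$-bundle, and tracing its proof shows that its classifying map is the composite $c : B \xrightarrow{\iota} Y' \xrightarrow{f'} K(A,n+1)$, where $\iota$ forgets the path component of a fiber element. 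By hypothesis $L$ preserves the fiber of $g$, so the canonical map $B \to \fib{Lg}{\eta y''_0}$ is an $L$-localization; write $LB \defeq \fib{Lg}{\eta y''_0}$. Applying \cref{proposition:preservesprincipalfibrations} to the principal bundle $E \to B$ with classifying map $c$, we learn that $L$ preserves the fiber sequence $E \to B \xrightarrow{c} K(A,n+1)$: concretely, $E \to \fib{Lc}{\ast}$ is an $L$-localization, where $Lc : LB \to K(A',n+1)$ is the localization of $c$ ($A'$ the algebraic localization of $A$), and moreover $\fib{Lc}{\ast} \to LB$ is a pointed principal $K(A',n)$-bundle. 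It now suffices to identify $\fib{Lc}{\ast}$ with $\fib{L(g\circ f)}{\eta y''_0}$, over $LB$ and compatibly with the maps out of $E$.

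For this identification I would argue as follows. By naturality of the localization unit and uniqueness of localizations, $L(g\circ f) \simeq Lg \circ Lf$, and by \cref{proposition:preservesprincipalfibrations} the map $Lf : LY \to LY'$ is a pointed principal $K(A',n)$-bundle classified by the localization $L(f') : LY' \to K(A',n+1)$ of $f'$. Running \cref{lemma:inducedfibseqisprincipal} on $LY \xrightarrow{Lf} LY' \xrightarrow{Lg} LY''$ at the point $\eta(y'_0)$, and using that $Lg(\eta(y'_0)) = \eta(y''_0)$ by naturality, presents the map $\fib{L(g\circ f)}{\eta y''_0} \to \fib{Lg}{\eta y''_0} = LB$ as the pointed principal $K(A',n)$-bundle classified by the composite $c' : LB \xrightarrow{\iota_L} LY' \xrightarrow{L(f')} K(A',n+1)$, where $\iota_L$ again forgets the path. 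The forgetful map $\iota_L$ sits in a naturality square over $\iota : B \to Y'$ whose left leg $B \to LB$ is an $L$-localization and whose codomain $LY'$ is $L$-local, so $\iota_L = L(\iota)$; hence $c' = L(f') \circ L(\iota) = L(f' \circ \iota) = Lc$. Therefore $\fib{L(g\circ f)}{\eta y''_0} = \fib{c'}{\ast} = \fib{Lc}{\ast}$ as pointed principal $K(A',n)$-bundles over $LB$, and the canonical comparison map $E \to \fib{L(g\circ f)}{\eta y''_0}$ is identified with the $L$-localization $E \to \fib{Lc}{\ast}$ produced above. That is precisely what it means for $L$ to preserve the fiber of $g\circ f$.

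I expect the main obstacle to be this last identification: verifying that the two descriptions of $\fib{L(g\circ f)}{\eta y''_0}$ as a pointed principal $K(A',n)$-bundle over $LB$ — one obtained by localizing the bundle $E \to B$ via \cref{proposition:preservesprincipalfibrations}, the other by running \cref{lemma:inducedfibseqisprincipal} on the already-localized triple — genuinely agree as pointed bundles, classifying maps included, and that this agreement is compatible with the canonical map out of $E$. The ingredients are all in place, but this requires carefully chasing the naturality of classifying maps under localization (that $Lf$ is classified by $L(f')$) and that forgetful maps out of fibers commute with the unit $\eta$; the rest is routine assembly of the cited results.
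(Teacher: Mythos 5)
Your proof is correct and rests on the same key ingredients as the paper's argument: \cref{lemma:inducedfibseqisprincipal}, \cref{proposition:preservesprincipalfibrations}, and the hypothesis that $L$ preserves the fiber of $g$. Where you diverge is in the finishing move. The paper localizes the classifying square $Y' \to K(A,n+1)$, $Y' \to Y''$, takes fibers of all the maps to assemble a $3\times 3$ diagram of fiber sequences, and then invokes commutativity of limits: the top-left corner is simultaneously the fiber of $L\fib{g}{g(y')} \to K(A',n+1)$ (identified with $L\fib{g\circ f}{g(y')}$ by the two cited results) and the fiber of $L(g\circ f)$, so the two agree without ever naming classifying maps. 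You instead exhibit both $\fib{g\circ f}{y''} \to \fib{g}{y''}$ and $\fib{L(g\circ f)}{\eta y''} \to \fib{Lg}{\eta y''}$ as pointed principal bundles — the first by \cref{lemma:inducedfibseqisprincipal}, the second by re-running it on the localized triple — and then match their classifying maps through the localization unit. That naturality check (that the fiber inclusion and $f'$ commute with $\eta$, so $L(f'\circ\iota)$ equals the composite $\iota_L$ then $L(f')$) is exactly what the paper's commutativity-of-limits step silently absorbs; your version is more verbose but makes the hypothesis it relies on visible. Both routes are sound and amount to the same argument.
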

\begin{proof}
    Consider the square in the beginning of the previous proof and localize it.
    Taking fibers of all the maps in the localized square we claim that we get a diagram equivalent to the following
    \begin{center}
        \begin{tikzpicture}
          \matrix (m) [matrix of math nodes,row sep=2em,column sep=2em,minimum width=2em,nodes={text height=1.75ex,text depth=0.25ex}]
            { \,\,  & L_{\degg(S)} \fib{g}{g(y')} & K(A',n+1)\\
             L_{\degg(S)} Y &  L_{\degg(S)} Y' & K(A',n+1) \\
             L_{\degg(S)} Y'' &  L_{\degg(S)} Y'' & \unit, \\};
          \path[-stealth]
            (m-2-1) edge node [above] {$L_{\degg(S)} f$} (m-2-2)
                    edge node [left] {$L_{\degg(S)}(g\circ f)$} (m-3-1)
            (m-1-2) edge node [right]{} (m-2-2)
                    edge node [above] {} (m-1-3)
            (m-2-2) edge node [left] {$L_{\degg(S)}g$} (m-3-2)
                    edge node [above] {} (m-2-3)
            (m-3-1) edge [double equal sign distance,-] node [left] {} (m-3-2)
            (m-3-2) edge node [left] {} (m-3-3)
            (m-1-3) edge [double equal sign distance,-] node [left] {} (m-2-3)
            (m-2-3) edge node [left] {} (m-3-3)
            ;
        \end{tikzpicture}
    \end{center}
    where $A'$ is the algebraic localization of $A$ away from $S$.
    This is clear for the right vertical fiber sequence and the bottom fiber sequence.
    It is true for the middle vertical fiber sequence by hypothesis, and it is true
    for the middle horizontal fiber sequence by \cref{proposition:preservesprincipalfibrations}.
    Finally, notice that taking the fiber of the top right map we get
    $L_{\degg(S)} \fib{g\circ f}{g(y')}$ by \cref{lemma:inducedfibseqisprincipal} and \cref{proposition:preservesprincipalfibrations},
    so, by commutativity of limits, we see that $\degg(S)$-localization
    preserves the fiber of $g\circ f$.
\end{proof}

\begin{prop}\label{proposition:preservescomposites}
    $\degg(S)$-localization preserves the fiber of pointed maps that factor as composites of pointed principal fibrations.
\end{prop}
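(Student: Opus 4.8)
The plan is to reduce everything to the propositions already proved and close with an induction on the length of the factorization. First I would note that, since being an $L$-localization is a mere property, the assertion ``$\degg(S)$-localization preserves the fiber of a given map'' is a mere proposition; hence there is no loss in assuming that we are handed a specific factorization
\[
    Y \equiv Y_0 \xrightarrow{g_1} Y_1 \xrightarrow{g_2} \cdots \xrightarrow{g_k} Y_k \equiv X
\]
of the map under consideration into pointed principal fibrations, and arguing by induction on $k$. The base case $k=0$ is immediate: the map is then an identity, whose fibers are contractible, and $\degg(S)$-localization preserves contractible types.

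For the inductive step I would set $h \defeq g_k \circ \cdots \circ g_2 : Y_1 \to X$ (read as $\idfunc{Y_1}$ when $k=1$), so that the original map equals $h \circ g_1$. Applying the inductive hypothesis to the shorter factorization $Y_1 \xrightarrow{g_2} \cdots \xrightarrow{g_k} Y_k$ shows that $\degg(S)$-localization preserves the fiber of $h$. Since $g_1$ is a pointed principal $K(-,n)$-bundle, \cref{proposition:preservesfibercomposite}, applied to the composable pair $Y_0 \xrightarrow{g_1} Y_1 \xrightarrow{h} X$, then yields that $\degg(S)$-localization preserves the fiber of $h \circ g_1$, which is exactly the map we started with. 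This completes the induction.

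The only point requiring care is bookkeeping rather than mathematics: \cref{proposition:preservesfibercomposite} requires the principal fibration to be the \emph{first} map of the two-fold composite, so the induction must peel $g_1$ off from the left and apply the hypothesis to the tail $Y_1 \to \cdots \to Y_k$, not to a head of the sequence. (One could instead peel $g_k$ off from the right, taking \cref{proposition:preservesprincipalfibrations} as the base case and a right-handed variant of \cref{proposition:preservesfibercomposite} for the step, but the left-handed argument above needs no new ingredient.) Thus there is no real obstacle here: all the substantive work was already done in \cref{proposition:preservesprincipalfibrations}, \cref{lemma:inducedfibseqisprincipal}, and \cref{proposition:preservesfibercomposite}, and this proposition is a clean corollary of them.
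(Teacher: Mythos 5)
Your argument is correct and matches the paper's proof almost verbatim: the paper likewise inducts on the length of the factorization, with the empty factorization (identity map) as the base case, and peels one principal fibration off at a time using \cref{proposition:preservesfibercomposite} for the inductive step. The only cosmetic difference is that the paper also cites \cref{proposition:preservesprincipalfibrations} alongside \cref{proposition:preservesfibercomposite} in the step, but as you note this is not strictly needed since \cref{proposition:preservesfibercomposite} already invokes it internally; your left-handed peeling is exactly the direction the paper intends.
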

\begin{proof}
    We do induction on the number of maps in the factorization.
    The base case can be taken to be the empty factorization, that is, the case when the map is an identity
    map between the same type. This case is immediate.

    For the inductive step, use \cref{proposition:preservesfibercomposite} together with \cref{proposition:preservesprincipalfibrations}.
\end{proof}

\begin{cor}\label{corollary:localizationpreservespostnikov}
    If $X$ is a pointed, nilpotent type, then for every $n: \N$, $\degg(S)$-localization preserves the
    fiber of $\ttrunc{n+1}{X} \to \ttrunc{n}{X}$.
\end{cor}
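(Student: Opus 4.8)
The plan is to derive this as a direct bookkeeping consequence of \cref{theorem:nilpotentchar} and \cref{proposition:preservescomposites}. First I would record that the conclusion---that $\degg(S)$-localization preserves the fiber of $\ttrunc{n+1}{X}\to\ttrunc{n}{X}$---is a mere proposition: being a $\degg(S)$-localization unit is a property (localizations are unique), so preservation of a fiber sequence is a property. I would also note that a nilpotent type is in particular connected, so every $\ttrunc{m}{X}$ is a pointed, connected type and each map $\ttrunc{m+1}{X}\to\ttrunc{m}{X}$ is a pointed map between such types, which is exactly the setting in which the machinery of the previous section applies; and the fiber in question is taken over the canonical base point of $\ttrunc{n}{X}$.

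Since we are proving a mere proposition, I can then use \cref{theorem:nilpotentchar} to unpack the hypothesis that $X$ is nilpotent: the map $\ttrunc{n+1}{X}\to\ttrunc{n}{X}$ merely factors as a finite composite of pointed principal fibrations, and, again by mere-propositionality, I may assume such a factorization is given. Applying \cref{proposition:preservescomposites} to this factorization---which asserts precisely that $\degg(S)$-localization preserves the fiber of any pointed map presented as a finite composite of pointed principal fibrations---immediately yields the claim.

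I do not expect a genuine obstacle here; the corollary is essentially a repackaging of the two cited results. The only step needing a moment's care is the reduction that strips the word ``merely'' from \cref{theorem:nilpotentchar}, which rests on the standard observation that being a localization unit---and hence the predicate ``$\degg(S)$-localization preserves the fiber of $g$''---is a mere proposition.
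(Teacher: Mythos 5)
Your proposal is correct and takes essentially the same route as the paper. The paper's proof is a one-liner — it observes that the conclusion is a mere proposition and then invokes \cref{proposition:preservescomposites} — leaving implicit the step you spell out, namely that \cref{theorem:nilpotentchar} supplies the mere factorization of $\ttrunc{n+1}{X}\to\ttrunc{n}{X}$ into principal fibrations which mere-propositionality lets you upgrade to an actual factorization.
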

\begin{proof}
$\degg(S)$-localization preserving the fiber of a map is a mere proposition, so the result follows from \cref{proposition:preservescomposites}.
\end{proof}

\begin{cor}
    The $\degg(S)$-localization of a nilpotent type is nilpotent.
\end{cor}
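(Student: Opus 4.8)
The plan is to reduce nilpotency of $LX$, where $L \defeq L_{\degg(S)}$, to the Postnikov-tower characterisation of \cref{theorem:nilpotentchar}. Since being nilpotent is a mere proposition I may assume $X$ is pointed and comes with a chosen nilpotent structure, and $LX$ is connected because $\degg(S)$-localization preserves connectedness. By \cref{theorem:nilpotentchar} it then suffices to prove that each map $\ttrunc{n+1}{LX}\to\ttrunc{n}{LX}$ in the Postnikov tower of $LX$ merely factors as a finite composite of principal fibrations.

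First I would transport the factorizations coming from the nilpotent structure on $X$. By \cref{theorem:nilpotentchar} applied to $X$, each Postnikov map $\ttrunc{n+1}{X}\to\ttrunc{n}{X}$ factors as a finite composite of pointed principal fibrations $\ttrunc{n+1}{X}\equiv Z_0\to\cdots\to Z_k\equiv\ttrunc{n}{X}$. Applying the functor $L$ and invoking \cref{proposition:preservesprincipalfibrations} on each $Z_i\to Z_{i+1}$ shows that $L(\ttrunc{n+1}{X}\to\ttrunc{n}{X})$ factors as the finite composite of pointed principal fibrations $LZ_0\to\cdots\to LZ_k$. The remaining task is to identify $L(\ttrunc{n+1}{X}\to\ttrunc{n}{X})$ with the Postnikov map of $LX$; for $n\geq 2$ this follows at once from \cref{corollary:commutativitylocalizationtruncation}, which supplies compatible equivalences $\ttrunc{m}{LX}\simeq L\ttrunc{m}{X}$, so these cases are done.

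The main obstacle is the bottom of the tower, since \cref{corollary:commutativitylocalizationtruncation} is only stated for $m\geq 2$: one must still show $\ttrunc{1}{LX}\simeq L\ttrunc{1}{X}$, equivalently that $\pi_1(LX)$ is nilpotent. Here $\ttrunc{1}{X}$ is a $1$-truncated nilpotent type, equivalent to $K(\pi_1(X),1)$, and its nilpotent structure is a central series of $\pi_1(X)$ (\cref{proposition:characterizationnilpgroup}); localizing that central series one principal fibration at a time via \cref{proposition:preservesprincipalfibrations} keeps every stage $1$-truncated --- this is just the $n=1$ instance of the argument behind \cref{remark:localizationtruncated} and \cref{corollary:localizationpreservestruncated} --- so $L\ttrunc{1}{X}$ is again a connected $1$-type, and then the proof of \cref{corollary:commutativitylocalizationtruncation} applies to give $\ttrunc{1}{LX}\simeq L\ttrunc{1}{X}$, whose factorization into principal fibrations handles the Postnikov maps for $n=0$ and $n=1$. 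Combining this with the previous paragraph, every Postnikov map of $LX$ merely factors as a finite composite of principal fibrations, so $LX$ is nilpotent by \cref{theorem:nilpotentchar}.
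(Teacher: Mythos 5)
Your proposal is correct and takes essentially the same route as the paper: identify the Postnikov maps of $L_{\degg(S)}X$ with the $\degg(S)$-localizations of the Postnikov maps of $X$ via \cref{corollary:commutativitylocalizationtruncation}, then note that the factorization into principal fibrations coming from the nilpotent structure persists after localization by \cref{proposition:preservesprincipalfibrations}. Where you diverge is that you explicitly flag and fill a genuine imprecision: \cref{corollary:commutativitylocalizationtruncation} is stated only for $n \geq 2$, so it does not directly give $\ttrunc{1}{L_{\degg(S)}X}\simeq L_{\degg(S)}\ttrunc{1}{X}$, which is needed for the Postnikov maps at levels $n = 0$ and $n = 1$. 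Your fix — rerun the inductive truncation-preservation argument of \cref{remark:localizationtruncated} and \cref{corollary:localizationpreservestruncated} at level $1$, which goes through because \cref{proposition:preservesprincipalfibrations} is stated for $n\geq 1$ and hence applies to the $K(A,1)$-bundles arising from the central series of $\pi_1(X)$, and then apply the proof of \cref{corollary:commutativitylocalizationtruncation} verbatim — is exactly the right repair, and it makes the argument more careful than the paper's own one-line invocation of the two corollaries. One very small caveat: when you say \cref{corollary:commutativitylocalizationtruncation} supplies ``compatible equivalences,'' you are implicitly using that the equivalences $\ttrunc{m}{L_{\degg(S)}X}\simeq L_{\degg(S)}\ttrunc{m}{X}$ commute with the Postnikov projections; this is true by naturality of the universal properties used to build them, but is worth a sentence, since the paper glosses over it as well.
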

\begin{proof}
    Given $X$ nilpotent, use \cref{corollary:commutativitylocalizationtruncation} to conclude that
    the $\degg(S)$-localization of the truncation map $\ttrunc{n+1}{X} \to \ttrunc{n}{X}$
    is equivalent to the truncation map $\ttrunc{n+1}{L_{\degg(S)}X} \to \ttrunc{n}{L_{\degg(S)}X}$.
    Now \cref{corollary:localizationpreservespostnikov} implies that this map, being equivalent to
    a $\degg(S)$-localization of a composite of principal fibrations, must be a composite of principal fibrations.
\end{proof}

\begin{thm}
    \label{theorem:localizationlocalizes}
    The $\degg(S)$-localization of a pointed, nilpotent type $X$ localizes all of the homotopy groups away from $S$.
\end{thm}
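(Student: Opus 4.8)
The plan is to read off the homotopy groups of $L_{\degg(S)}X$ from the fibers of the maps in its Postnikov tower, which we have already identified in \cref{corollary:localizationpreservespostnikov}. Write $LX$ for $L_{\degg(S)}X$ and fix $X$ pointed and nilpotent, with localization unit $\eta : X \to LX$; recall $LX$ is again nilpotent, hence connected. The statement to prove is that for every $n \geq 1$ the map $\pi_n(\eta) : \pi_n(X) \to \pi_n(LX)$ is an algebraic localization away from $S$. I would treat $n \geq 2$ and $n = 1$ separately, since $\pi_1$ may fail to be abelian.

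For $n \geq 2$ the argument is a diagram chase. Since $LX$ is connected, the $n$-th Postnikov map $\ttrunc{n}{LX} \to \ttrunc{n-1}{LX}$ has fiber $K(\pi_n(LX),n)$. By \cref{corollary:commutativitylocalizationtruncation} the stages of the Postnikov tower of $LX$ are the $L_{\degg(S)}$-localizations of those of $X$, compatibly with $\eta$, so this map is identified with the localization of $\ttrunc{n}{X} \to \ttrunc{n-1}{X}$ (for $n = 2$ one additionally uses that $L_{\degg(S)}$ preserves $1$-truncatedness of the $1$-truncated nilpotent type $\ttrunc{1}{X}$, which follows from the inductive construction in \cref{remark:localizationtruncated} since only abelian groups are localized there, so that \cref{corollary:commutativitylocalizationtruncation} applies in this range as well). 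By \cref{corollary:localizationpreservespostnikov}, $L_{\degg(S)}$ preserves the fiber of $\ttrunc{n}{X} \to \ttrunc{n-1}{X}$, which is $K(\pi_n(X),n)$; hence the map $K(\pi_n(X),n) \to K(\pi_n(LX),n)$ induced by $\eta$ is a $\degg(S)$-localization. On the other hand, as $\pi_n(X)$ is abelian, the $\degg(S)$-localization of an Eilenberg-Mac Lane space on it is $K(\pi_n(X)',n)$, where $\pi_n(X)'$ is the algebraic localization, and the localization map realizes the algebraic localization on $\pi_n$ (part of the localization of Eilenberg-Mac Lane spaces in \cite{CORS}, already used in \cref{proposition:preservesprincipalfibrations}). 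Comparing these two descriptions of the same localization and applying $\pi_n$ identifies $\pi_n(\eta)$ with $\pi_n(X) \to \pi_n(X)'$.

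For $n = 1$ I would instead verify the universal property of the algebraic localization directly for $\pi_1(\eta) : \pi_1(X) \to \pi_1(LX)$. Two preliminary observations are needed: first, $K(H,1)$ is a $\degg(S)$-local type whenever $H$ is a $\degg(S)$-local group, since $\loopspacesym K(H,1)$ is the underlying set of $H$ and the self-map induced by $S(k)$ is the power map $h \mapsto h^{S(k)}$, a bijection; second, the homotopy groups of any $\degg(S)$-local type are $\degg(S)$-local groups (because $\degg(S)$-local types are closed under $\loopspacesym$ and the $S(k)$-self-map on $\loopspacesym(Z,z)$ induces the power map on $\pi_1$), so in particular $\pi_1(LX)$ is $\degg(S)$-local. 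Now, given a $\degg(S)$-local group $H$ and a morphism $\phi : \pi_1(X) \to_\Grp H$, apply $K(-,1)$ to $\phi$ and precompose with $X \to \ttrunc{1}{X} \simeq K(\pi_1(X),1)$ to get a map $X \to K(H,1)$; since $K(H,1)$ is $\degg(S)$-local it factors uniquely through $\eta$, and since $LX$ is connected the resulting map $LX \to K(H,1)$ factors uniquely through $\ttrunc{1}{LX} \simeq K(\pi_1(LX),1)$, producing via \cite[Theorem~4]{BRV} a morphism $\pi_1(LX) \to_\Grp H$; naturality of these factorizations shows it is the unique factorization of $\phi$ through $\pi_1(\eta)$, so $\pi_1(\eta)$ is an algebraic localization.

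The main obstacle is bookkeeping rather than a new idea: one must check that the chain of identifications used for $n \geq 2$ --- commutation of truncation with localization, preservation of Postnikov fibers, and the computation of the localization of $K(A,n)$ --- is natural enough in $\eta$ that the composite genuinely carries $\pi_n(\eta)$ to the algebraic-localization unit rather than to some unspecified isomorph of it; and in the $n = 1$ case one must make the naturality squares of the universal-property factorizations commute in order to obtain uniqueness, and not merely existence, of the factorization through $\pi_1(\eta)$.
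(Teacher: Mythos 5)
Your proof is correct and follows essentially the same route as the paper: for $n \geq 2$, both combine \cref{corollary:commutativitylocalizationtruncation} and \cref{corollary:localizationpreservespostnikov} to identify the Postnikov fiber $K(\pi_n(L_{\degg(S)}X),n)$ with $L_{\degg(S)}K(\pi_n(X),n)$ and then read off the algebraic localization from the localization of Eilenberg-Mac Lane spaces in \cite{CORS}, while for $\pi_1$ the paper simply cites \cite[Theorem~4.15]{CORS}, of which your direct universal-property argument is an unwinding. Your parenthetical care in the $n=2$ case --- noting that \cref{corollary:commutativitylocalizationtruncation} as stated only covers $n \geq 2$ but extends to $n = 1$ via the inductive construction of \cref{remark:localizationtruncated} --- addresses a bookkeeping point the paper elides.
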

The proof is now essentially the same as the proof of \citep[Theorem~4.25]{CORS}.

\begin{proof}
    The fact that $\degg(S)$-localization localizes $\pi_1(X)$ is a special case of \citep[Theorem~4.15]{CORS}.
    Now, take $n \geq 1$ and consider the fiber sequence
    \[
        K(\pi_{n+1}(X),n+1) \longhookrightarrow \ttrunc{n+1}{X} \lra \ttrunc{n}{X}.
    \]
    Notice that all the types in the fiber sequence are nilpotent.
    Applying $\degg(S)$-localization we obtain a map of fiber sequences
    \[
        \begin{tikzpicture}
          \matrix (m) [matrix of math nodes,row sep=2em,column sep=3em,minimum width=2em,nodes={text height=1.75ex,text depth=0.25ex}]
            { K(\pi_{n+1}(X),n+1) \strut & \ttrunc{n+1}{X} \strut & \ttrunc{n}{X} \strut \\
            L_{\degg(S)} K(\pi_{n+1}(X),n+1) &  \ttrunc{n+1}{L_{\degg(S)}X} & \ttrunc{n}{L_{\degg(S)} X}\\};
          \path[->]
            (m-1-1) edge [right hook->] node [right] {} (m-1-2)
                    edge node [right] {} (m-2-1)
            (m-1-2) edge node [right] {} (m-1-3)
                    edge node [right] {} (m-2-2)
            (m-2-2) edge node [right] {} (m-2-3)
            (m-2-1) edge [right hook->] node [right] {} (m-2-2)
            (m-1-3) edge node [right] {} (m-2-3)
            ;
        \end{tikzpicture}
    \]
    by \cref{corollary:localizationpreservespostnikov} and \cref{corollary:commutativitylocalizationtruncation}.
    Looking at the bottom fiber sequence, we see that $\pi_n(L_\degg(S)X)$ is the algebraic localization
    of $\pi_n(X)$, by \citep[Theorem~4.22]{CORS}, concluding the proof.
\end{proof}

\section{Fracture squares}
\label{section:fracturesquares}

In this section we construct two fracture squares. The first one holds for any simply connected type.
Its construction is different from the classical construction that can be found in, e.g., \citep{MayPonto}.
A different construction is needed in Homotopy Type Theory if one wants to avoid the use of Whitehead's theorem.
The second square applies to nilpotent types, but requires that they are truncated. Its construction is analogous to the classical construction.
% right above

In \cref{fracsimplcon} we show how we can treat simply connected types as generalized abelian groups, and
construct a fracture square for simply connected types with a simple and direct argument.
In \cref{fracnilp} we use Postnikov towers to construct a fracture square for truncated nilpotent types.

\subsection{A fracture theorem for simply connected types}
\label{fracsimplcon}

In this section we will prove one of the many fracture theorems.
The classical version is as follows.
We are given two subrings of the rational numbers $R,S \subseteq \mathbb{Q}$ with $R\cap S = \mathbb{Z}$,
and a pointed simply connected space $X$.
The theorem then says that the following square given by localizations is a homotopy pullback
    \begin{center}
        \begin{tikzpicture}
          \matrix (m) [matrix of math nodes,row sep=2em,column sep=2em,minimum width=2em,nodes={text height=1.75ex,text depth=0.25ex}]
          { X & X_S \\
            X_R & X_T \\};
          \path[-stealth]
            (m-1-1) edge node [above] {} (m-1-2)
                    edge node [left] {} (m-2-1)
            (m-2-1) edge node [above] {} (m-2-2)
            (m-1-2) edge node [right]{} (m-2-2)
            ;
        \end{tikzpicture}
    \end{center}
where $T = R \otimes S$ and the localizations correspond to inverting the units 
in the corresponding ring.

In the constructive setting we let $R,S : \mathbb{N} \to \mathbb{N}$
denote denumerable (multi) sets of natural numbers,
and we define $T : \mathbb{N} \to \mathbb{N}$ by multiplying $R$ and $S$ pointwise.
We also assume that for all $n,m : \N$, $R_n$ is coprime to $S_m$.

We start by constructing a candidate for the fracture square in Homotopy Type Theory.
For this, we need the following.

\begin{lem}
    The maps
    \begin{align*}
        &L_{\degg(R)} X \to L_{\degg(S)}L_{\degg(R)}X,\\
        &L_{\degg(S)}X \to L_{\degg(R)}L_{\degg(S)}X,
    \end{align*}
    and the composites
    \begin{align*}
        &X \to L_{\degg(R)} X \to L_{\degg(S)}L_{\degg(R)}X,\\
        &X\to L_{\degg(S)}X \to L_{\degg(R)}L_{\degg(S)}X, 
    \end{align*}
    are $\degg(T)$-localizations.
    \qed
\end{lem}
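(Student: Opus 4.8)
The plan is to reduce the whole statement to a single observation: a type $Z$ is $\degg(T)$-local if and only if it is simultaneously $\degg(R)$-local and $\degg(S)$-local. This is purely algebraic. For a loop $\ell$ in any loop space $\loopspacesym Z$, all powers of $\ell$ commute, so since $T(n)=R(n)\cdot S(n)$ the self-map ``multiply by $T(n)$'' of $\loopspacesym Z$ is exactly the composite of ``multiply by $R(n)$'' and ``multiply by $S(n)$''. Hence if the latter two are equivalences so is the former, giving one direction. Conversely, if ``multiply by $T(n)$'' is an equivalence then ``multiply by $R(n)$'' is injective (because $\ell^{R(n)}=\ell'^{R(n)}$ forces $\ell^{T(n)}=\ell'^{T(n)}$) and surjective (given $m$, choosing $x$ with $x^{T(n)}=m$ yields $(x^{S(n)})^{R(n)}=m$), and symmetrically for $S(n)$. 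Note that this reduction does not use the coprimality hypothesis; coprimality will only be needed later, when showing the resulting square is a pullback.

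Granting this, the main point is that the target $W\defeq L_{\degg(S)}L_{\degg(R)}X$ is $\degg(T)$-local (and similarly $L_{\degg(R)}L_{\degg(S)}X$). It is $\degg(S)$-local by construction, so by the reduction it suffices to check it is $\degg(R)$-local, i.e.\ that $\degg(S)$-localization preserves $\degg(R)$-locality of $L_{\degg(R)}X$. Here is where simple connectedness of $X$ (in force throughout this section) enters: $L_{\degg(R)}X$ is again simply connected by \cite[Corollary~4.14]{CORS}, so by \cref{proposition:L'simplyconnected} its $\degg(S)$-localization agrees with its $\suspsym\degg(S)$-localization, which is the same as its $(L_{\degg(S)})'$-localization into separated types. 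Now \cref{lemma:localizationpreserveslocal}, applied with the reflective subuniverse $L=L_{\degg(S)}$ and the enumeration $R$ in place of $S$, says precisely that the $(L_{\degg(S)})'$-localization of the $\degg(R)$-local type $L_{\degg(R)}X$ remains $\degg(R)$-local. Hence $W$ is $\degg(R)$-local, and so $\degg(T)$-local; the same argument with $R$ and $S$ exchanged handles $L_{\degg(R)}L_{\degg(S)}X$. I expect this preservation step to be the only real obstacle; everything else is formal.

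It remains to recognize the maps as $\degg(T)$-localizations, which now follows by composing universal properties. For any $\degg(T)$-local $Z$, the reduction makes $Z$ both $\degg(R)$- and $\degg(S)$-local, so precomposition with the units induces equivalences $(W\to Z)\simeq(L_{\degg(R)}X\to Z)\simeq(X\to Z)$, the first from the universal property of $L_{\degg(S)}$ and the second from that of $L_{\degg(R)}$. Thus the map $L_{\degg(R)}X\to W$, and the composite $X\to L_{\degg(R)}X\to W$, are maps into the $\degg(T)$-local type $W$ that induce an equivalence on maps into every $\degg(T)$-local type, hence are $\degg(T)$-localizations; the remaining two maps are treated identically via $L_{\degg(R)}L_{\degg(S)}X$.
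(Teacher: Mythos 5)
Your proof is correct and expands the same argument the paper compresses into a citation of \cref{lemma:localizationpreserveslocal} and \cref{proposition:L'simplyconnected}, with the (implicit but necessary) algebraic observation that $\degg(T)$-locality is equivalent to $\degg(R)$- and $\degg(S)$-locality made explicit. One small imprecision: since $\loopspacesym Z$ need not be a set, the converse of your algebraic reduction should be argued by noting that the $R(n)$-power map admits both a section and a retraction (each given by the $S(n)$-power map composed with the inverse of the $T(n)$-power map, using that power maps commute up to homotopy), rather than via injectivity and surjectivity; the fix is immediate.
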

\begin{proof}

This follows from \cref{lemma:localizationpreserveslocal} and the fact that for a simply connected type
$\degg(S)$- and $\suspsym \degg(S)$-localization coincide (\cref{proposition:L'simplyconnected}).
\end{proof}

This lets us write $L_{\degg(T)} X$ instead of $L_{\degg(S)}L_{\degg(R)}X$ and $L_{\degg(R)}L_{\degg(S)}X$.
In particular we have a commutative square
\begin{center}
    \begin{tikzpicture}
      \matrix (m) [matrix of math nodes,row sep=2em,column sep=2em,minimum width=2em,nodes={text height=1.75ex,text depth=0.25ex}]
      { X & L_{\degg(S)}X \\
        L_{\degg(R)}X  & L_{\degg(T)}X, \\};
      \path[-stealth]
        (m-1-1) edge node [above] {} (m-1-2)
                edge node [left] {} (m-2-1)
        (m-2-1) edge node [above] {} (m-2-2)
        (m-1-2) edge node [right]{} (m-2-2)
        ;
    \end{tikzpicture}
\end{center}
such that all of its maps are localizations.

\begin{lem}
    \label{lemma:coprimepullback}
    Let $n,m:\mathbb{N}$ be coprime numbers. Then, for every pointed, simply connected type $X$,
    the square
    \begin{center}
        \begin{tikzpicture}
          \matrix (m) [matrix of math nodes,row sep=3em,column sep=4em,minimum width=2em,nodes={text height=1.75ex,text depth=0.25ex}]
          { \loopspacesym X & \loopspacesym X \\
            \loopspacesym X & \loopspacesym X \\};
          \path[-stealth]
            (m-1-1) edge node [above] {$n$} (m-1-2)
                    edge node [left] {$m$} (m-2-1)
            (m-2-1) edge node [above] {$n$} (m-2-2)
            (m-1-2) edge node [right]{$m$} (m-2-2)
            ;
        \end{tikzpicture}
    \end{center}
    is a pullback.
\end{lem}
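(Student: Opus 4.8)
The plan is to recognise all four maps as power maps on the loop space $\loopspacesym X$, equipped with path concatenation as its group operation, and then to decide the pullback property by inspecting fibres. Write $\mu_{k} : \loopspacesym X \to \loopspacesym X$ for the $k$-th power map $a \mapsto a^{k}$ (for $k : \Z$); it is a pointed map, and since $k \mapsto a^{k}$ is a group homomorphism $\Z \to \loopspacesym X$ (it is $\loopspacesym$ of the pointed map $\sphere{1}\to X$ classified by $a$, under the isomorphism $\Z \simeq \loopspacesym\sphere{1}$), any two powers of a fixed $a$ commute. In particular $(a^{n})^{m} = a^{nm} = a^{mn} = (a^{m})^{n}$, and this identity is the filler making the square in the statement — with $n$ and $m$ read as $\mu_{n}$ and $\mu_{m}$ — commute.

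I would then use the standard criterion that a commuting square is a pullback exactly when, for each point $c$ of the bottom-left corner, the induced map from the fibre of the left vertical map over $c$ to the fibre of the right vertical map over the image of $c$ in the bottom-right corner is an equivalence. In our square both vertical maps are $\mu_{m}$ and the bottom map is $\mu_{n}$, so the criterion reads: for every $c : \loopspacesym X$, the induced map $\fib{\mu_{m}}{c} \to \fib{\mu_{m}}{c^{n}}$ is an equivalence. Since $X$ is simply connected, $\loopspacesym X$ is connected; being an equivalence is a proposition, so it suffices to treat $c \equiv \refl{}$. Then $c^{n} = \refl{}$ canonically, both fibres are identified with
\[
  F \;\defeq\; \sm{a : \loopspacesym X} (a^{m} = \refl{}),
\]
and the induced map is, after unwinding these identifications, the corestriction to $F$ of $\mu_{n}$ — which does land in $F$, since $a^{m} = \refl{}$ forces $(a^{n})^{m} = (a^{m})^{n} = \refl{}$.

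It remains to show $\mu_{n} : F \to F$ is an equivalence. Using that $n$ and $m$ are coprime, choose $s,t : \Z$ with $sn + tm = 1$. The map $\mu_{s}$ also restricts to $F \to F$ (if $a^{m} = \refl{}$ then $(a^{s})^{m} = (a^{m})^{s} = \refl{}$), and for $a : F$ the homomorphism property together with $a^{m} = \refl{}$ gives
\[
  (a^{n})^{s} \;=\; a^{ns} \;=\; a^{1-tm} \;=\; a \sq (a^{m})^{-t} \;=\; a ,
\]
and symmetrically $(a^{s})^{n} = a$. Promoting these pointwise identities to homotopies $\mu_{s}\circ\mu_{n} \htpy \idfunc{F} \htpy \mu_{n}\circ\mu_{s}$ on $F$ makes $\mu_{n}$ an equivalence there, completing the argument.

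The step I expect to need the most care is the bookkeeping behind the ``after unwinding'' clause: verifying that the map genuinely induced on fibres by the filled square coincides with $\mu_{n}$ corestricted to $F$ — which requires tracking the canonical paths $\refl{}^{n} = \refl{}$ and the chosen filler $(a^{n})^{m} = (a^{m})^{n}$ — and, similarly, promoting the pointwise power identities above to honest homotopies of maps into the $\Sigma$-type $F$, whose fibre coordinate is a path in the (generally non-truncated) type $\loopspacesym X$. I would also flag that simple-connectedness is genuinely used and not merely convenient: the reduction to $c \equiv \refl{}$ needs $\loopspacesym X$ connected, and the statement fails for disconnected $\loopspacesym X$ — already when $X$ is a $K(G,1)$ with $G$ nonabelian.
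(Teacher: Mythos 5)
Your argument is correct and takes a genuinely different route from the paper's. The paper loops once more and works with $Y \defeq \loopspacesym^2 X$, where homotopy commutativity holds, so that power maps can be assembled into matrices; it then shows that $\begin{pmatrix} n & -m \\\ \beta & \alpha \end{pmatrix}$ is an equivalence of $Y\times Y$ (mutual inverse $\begin{pmatrix} \alpha & m \\\ -\beta & n \end{pmatrix}$), reducing the fiber sequence $Y\to Y\times Y \xrightarrow{(n\ -m)} Y$ to the trivial one $Y \to Y\times Y \xrightarrow{(1\ 0)} Y$. You stay at the level of $\loopspacesym X$, invoke the fiberwise pullback criterion directly, use simple-connectedness (as the paper implicitly does) to reduce the base point to $\refl{}$, and show $\mu_n \colon F \to F$ is an equivalence with explicit inverse $\mu_s$ built from the same B\'{e}zout coefficients. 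A genuine gain of your approach is that you need only that the powers of a \emph{single} loop commute (coming from the map $\Z \to \loopspacesym X$), not full homotopy commutativity of a double loop space; a gain of the paper's approach is that, after the extra loop, the fiber of $(1,0)$ is a plain product factor, so the second-coordinate bookkeeping essentially disappears.

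That bookkeeping is the one place I would want you to go further. You correctly flag the step of promoting the pointwise identity $(a^n)^s = a$ (for $a$ with $a^m = \refl{}$) to a homotopy $\mu_s \circ \mu_n \htpy \idfunc{F}$. This is not automatic: the second coordinate of a point of $F$ is a path in the generally non-truncated type $\loopspacesym X$, so after producing the base path $e \colon a^{ns} = a$ you must also produce the dependent path identifying the derived proof $r\colon (a^{ns})^m = \refl{}$ with the transport of $p$ along $e$, i.e.\ a $2$-cell $r = \ap{\mu_m}(e) \sq p$. This is true, but it is a genuine coherence that has to be exhibited (involving the filler $(a^m)^n = (a^n)^m$, the identification $\refl{}^n = \refl{}$, and the functoriality of $\ap{}$), and it is the main content the proposal leaves to the reader. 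By contrast, the paper's change-of-basis argument pushes the coherence burden onto the (easier) verification that two explicit maps $Y\times Y \to Y\times Y$ are mutual inverses, for which bi-invertibility suffices and the two homotopies need not themselves cohere. Your observation that $K(G,1)$ with $G$ nonabelian is a counterexample to the statement without simple-connectedness is correct and a good sanity check on where that hypothesis enters.
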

\begin{proof}
    Let $\alpha n + \beta m = 1$. Since $X$ is connected, $\loopspacesym$ reflects equivalences,
    and since it also preserves limits, the square in the statement is a pullback if and only if it is after looping.
    Let $Y \defeq \loopspacesym^2 X$. It is then enough to show that
    \[
        Y \xrightarrow{\begin{pmatrix}m \\ n\end{pmatrix}} Y \times Y \xrightarrow{\begin{pmatrix}n & -m\end{pmatrix}} Y
    \]
    is a fiber sequence, where we are considering the fiber over $\refl{}$.
    Here we are using matrix notation, so that, for example, the map
    ${\begin{pmatrix}n & -m\end{pmatrix}} : Y \times Y \to Y$ is given by $(l,l') \mapsto (l^n \sq l'^{-m})$.
    The reason why it is enough to consider the above fiber sequence is that the fiber of $\begin{pmatrix}n & -m\end{pmatrix}$
    is $\sum_{(l,l' : Y)} l^n \sq l'^{-m} = \refl{}$, which is equivalent to the pullback of
    $Y \xrightarrow{n} Y \xleftarrow{m} Y$, namely $\sum_{(l,l' : Y)} l^n = l'^{m}$.

    Notice that we have maps
    \[
     \begin{pmatrix}n & -m \\\beta & \alpha \end{pmatrix},
     \begin{pmatrix}\alpha & m \\-\beta & n \end{pmatrix} :
         Y \times Y \to Y\times Y
    \]
    and these are mutual inverses, using the fact that $Y$ is homotopy commutative.

    This means that we have an equivalence of maps
    \begin{center}
        \begin{tikzpicture}
          \matrix (m) [matrix of math nodes,row sep=3em,column sep=4em,minimum width=2em,nodes={text height=1.75ex,text depth=0.25ex}]
          { Y \times Y & Y \\
            Y \times Y & Y \\};
          \path[-stealth]
            (m-1-1) edge node [above] {$\begin{pmatrix}n & -m\end{pmatrix}$} (m-1-2)
                    edge node [left] {$\begin{pmatrix}n & -m \\\beta & \alpha \end{pmatrix}$} (m-2-1)
            (m-2-1) edge node [above] {$\begin{pmatrix}1 & 0\end{pmatrix}$} (m-2-2)
            (m-1-2) edge [double equal sign distance,-] node [right] {} (m-2-2)
            ;
        \end{tikzpicture}
    \end{center}
    But the fiber of the bottom map is clearly $Y$,
    so using the inverse of the left vertical map we get the fiber sequence that we needed.
\end{proof}

\begin{rmk}
Define $r,s,t,\rho,\sigma: \mathbb{N} \to \mathbb{N}$ by:
\[
   r_n := \prod_{i = 0}^n R_i,\,\,\,\,\,\,\, s_n := \prod_{i = 0}^n S_i,\,\,\,\,\,\,\, t_n := r_n \times s_n,
   \,\,\,\,\,\,\, \rho_n := \prod_{i = 0}^n r_i, \,\,\,\,\,\,\, \sigma_n := \prod_{i = 0}^n s_i.
\]
Then if $H$ is a loop space, the following diagram commutes:
\begin{equation}
    \label{diagram:bigdiagram}
    \begin{tikzpicture}[baseline=(current  bounding  box.center)]
          \matrix (m) [matrix of math nodes,row sep=3em,column sep=3em,minimum width=2em,nodes={text height=1.75ex,text depth=0.25ex}]
          { H & H & H & H & \cdots \\
            H & H & H & H & \cdots \\
            H & H & H & H & \cdots \\};
          \path[-stealth]
            (m-1-1) edge node [above] {$r_1$} (m-1-2)
                    edge node [left] {$\sigma_0$} (m-2-1)
            (m-2-1) edge node [above] {$t_1$} (m-2-2)
            (m-3-1) edge node [above] {$s_1$} (m-3-2)
                    edge node [left] {$\rho_0$} (m-2-1)
            (m-1-2) edge node [above] {$r_2$} (m-1-3)
                    edge node [left] {$\sigma_1$} (m-2-2)
            (m-2-2) edge node [above] {$t_2$} (m-2-3)
            (m-3-2) edge node [above] {$s_2$} (m-3-3)
                    edge node [left] {$\rho_1$} (m-2-2)
            (m-1-3) edge node [above] {$r_3$} (m-1-4)
                    edge node [left] {$\sigma_2$} (m-2-3)
            (m-2-3) edge node [above] {$t_3$} (m-2-4)
            (m-3-3) edge node [above] {$s_3$} (m-3-4)
                    edge node [left] {$\rho_2$} (m-2-3)
            (m-1-4) edge node [above] {$r_4$} (m-1-5)
                    edge node [left] {$\sigma_3$} (m-2-4)
            (m-2-4) edge node [above] {$t_4$} (m-2-5)
            (m-3-4) edge node [above] {$s_4$} (m-3-5)
                    edge node [left] {$\rho_3$} (m-2-4)
            ;
        \end{tikzpicture} \end{equation}
\end{rmk}

\begin{thm}
    \label{theorem:fracturetheorem}
    If $X$ is simply connected and for all $n,m : \N$, $R_n$ is coprime to $S_m$, then the square
\begin{center}
    \begin{tikzpicture}
      \matrix (m) [matrix of math nodes,row sep=2em,column sep=2em,minimum width=2em,nodes={text height=1.75ex,text depth=0.25ex}]
      { X & L_{\degg(S)}X \\
        L_{\degg(R)}X  & L_{\degg(T)}X, \\};
      \path[-stealth]
        (m-1-1) edge node [above] {} (m-1-2)
                edge node [left] {} (m-2-1)
        (m-2-1) edge node [above] {} (m-2-2)
        (m-1-2) edge node [right]{} (m-2-2)
        ;
    \end{tikzpicture}
\end{center}
    is a pullback.
\end{thm}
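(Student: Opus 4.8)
The plan is to reduce the statement to a pullback of loop spaces that can be assembled from the coprime pullbacks of \cref{lemma:coprimepullback} by passing to a sequential colimit, and then to deloop. First I would set $H \defeq \loopspacesym X$, which is a connected $H$-space since $X$ is simply connected. Using the description of $\degg(-)$-localization of connected $H$-spaces as a sequential colimit of power maps (from \cite{CORS}), together with \cref{proposition:L'simplyconnected} and the compatibility of $\loopspacesym$ with these localizations on simply connected types, I would identify the colimits of the three rows of \eqref{diagram:bigdiagram} with $\loopspacesym L_{\degg(R)}X$, $\loopspacesym L_{\degg(T)}X$ and $\loopspacesym L_{\degg(S)}X$, the colimit inclusions of the leftmost objects being the looped localization units, up to postcomposition with the (invertible) $R_0$-, $T_0$- and $S_0$-power maps.

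Next I would apply \cref{lemma:coprimepullback} column by column. In the $n$-th column of \eqref{diagram:bigdiagram} the two vertical maps are the $\sigma_{n-1}$- and $\rho_{n-1}$-power maps, and $\sigma_{n-1}$ is a product of values of $S$ while $\rho_{n-1}$ is a product of values of $R$, so these numbers are coprime by hypothesis; hence the column extends to a pullback square whose fourth vertex is $\loopspacesym X$, with projections the $\rho_{n-1}$- and $\sigma_{n-1}$-power maps. The identities $\rho_n = r_n\rho_{n-1}$ and $\sigma_n = s_n\sigma_{n-1}$ show these squares are compatible with the horizontal maps of \eqref{diagram:bigdiagram}, the induced map between consecutive fourth vertices being $\idfunc{\loopspacesym X}$, and the resulting cubes commute by the two families of commuting squares recorded in \eqref{diagram:bigdiagram}. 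This produces a sequence of pullback squares indexed by $\N$; since sequential colimits commute with pullbacks, its colimit is a pullback square, with fourth vertex $\colim(\loopspacesym X \xrightarrow{\idfunc{}} \loopspacesym X \xrightarrow{\idfunc{}} \cdots) \simeq \loopspacesym X$ and remaining vertices the looped localizations above. Tracking the maps, this is the image under $\loopspacesym$ of the square in the statement, up to equivalences on the vertices; in particular $\loopspacesym g$ is an equivalence, where $P$ denotes the pullback of $L_{\degg(R)}X\to L_{\degg(T)}X\leftarrow L_{\degg(S)}X$ and $g : X \to P$ is the canonical comparison map.

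Finally I would deloop. By \cite[Corollary~4.14]{CORS} each of the three localizations is simply connected, and \cref{theorem:localizationlocalizes} identifies their $\pi_2$ with the algebraic localizations of $\pi_2(X)$ away from $R$, $S$, $T$; coprimality of $R$ and $S$ implies, by B\'ezout, that the map from the direct sum of the first two onto the third is surjective, so the long exact sequence of the pullback $P$ forces $\pi_1(P) = 0$ and $P$ is simply connected. Then $g$ is a map of simply connected types, so $\fib{}{g}$ is connected, while $\loopspacesym\fib{}{g} \simeq \fib{}{\loopspacesym g} \simeq \unit$ since $\loopspacesym g$ is an equivalence; a connected type with contractible loop space is contractible, so $\fib{}{g}\simeq\unit$, and since $P$ is connected this makes every fiber of $g$ contractible, i.e.\ $g$ is an equivalence. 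Hence the square is a pullback.

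I expect the main obstacle to be the bookkeeping of the second paragraph: correctly identifying the colimits of the rows of \eqref{diagram:bigdiagram} with the looped localizations, checking that the column-wise pullback squares really organize into an $\N$-indexed diagram of squares, and matching its colimit with $\loopspacesym$ of the fracture square (absorbing the leftmost $R_0$/$S_0$/$T_0$ powers into equivalences on the vertices), all while using the commutation of sequential colimits with pullbacks. The delooping is the only genuinely new idea here — it is what lets this construction avoid Whitehead's principle — but it is short once one notices that $\loopspacesym g$ being an equivalence makes $\loopspacesym\fib{}{g}$ contractible.
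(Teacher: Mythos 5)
Your proposal is correct and follows essentially the same route as the paper: identify the looped square with the fracture square for $\loopspacesym X$, write the cospan as a sequential colimit of the rows of diagram~\eqref{diagram:bigdiagram}, take column-wise pullbacks via \cref{lemma:coprimepullback} to get a constant sequence at $\loopspacesym X$, and commute sequential colimits with pullbacks. Where you diverge is in the final delooping step, and the comparison is worth a remark. The paper dispatches it in one line by asserting that the loop functor on pointed connected types preserves limits and reflects equivalences; implicitly this needs the pullback $P$ to be connected, which follows immediately from the cospan being simply connected (the fiber of $L_{\degg(R)}X \to L_{\degg(T)}X$ is connected since both ends are simply connected, and then $P \to L_{\degg(S)}X$ is a fibration with connected fiber and connected base). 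You instead prove the stronger fact that $P$ is simply connected, using \cref{theorem:localizationlocalizes} and a B\'ezout argument on $\pi_2$, and then conclude via $\fib{}{g}$ being both connected and loop-contractible. Both are valid and both avoid Whitehead's principle in the same way; your version is a bit more work than strictly necessary (connectedness of $P$ suffices), but it makes the required connectivity check fully explicit rather than leaving it packed into the phrase \emph{reflects equivalences}. Your closing claim that the delooping is ``the only genuinely new idea'' matches the paper's own emphasis — the paper uses exactly this loop/deloop trick — so you have reconstructed the intended proof rather than found an alternative.
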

\begin{proof}
    Since we must prove a mere proposition and $X$ is connected, we can assume that we have $x : X$.
    Since $X$ is connected, and the loop functor from pointed connected types to types preserves limits
    and reflects equivalences, the square is a pullback if and only if
    it is after looping.
    Using the fact that $X$ is simply connected, we see that applying loop space to the diagram, we get a diagram
    equivalent to
    \begin{center}
        \begin{tikzpicture}
          \matrix (m) [matrix of math nodes,row sep=2em,column sep=2em,minimum width=2em,nodes={text height=1.75ex,text depth=0.25ex}]
          { \loopspacesym X & L_{\degg(S)}\loopspacesym X \\
              L_{\degg(R)}\loopspacesym X &  L_{\degg(T)}\loopspacesym X \\};
          \path[-stealth]
            (m-1-1) edge node [above] {} (m-1-2)
                    edge node [left] {} (m-2-1)
            (m-2-1) edge node [above] {} (m-2-2)
            (m-1-2) edge node [right]{} (m-2-2)
            ;
        \end{tikzpicture}
    \end{center}
    by \citep[Corollary~4.13]{CORS}.

    By taking $H$ to be $\loopspacesym X$ in \cref{diagram:bigdiagram} and taking the 
    colimit of each of the rows we get the diagram:
    \[
        L_{\degg(R)}\loopspacesym X \rightarrow L_{\degg(T)}\loopspacesym X \leftarrow L_{\degg(S)}\loopspacesym X,
    \]
    by \citep[Theorem~4.20]{CORS}.

    Observe that by hypothesis $\rho_n$ and $\sigma_n$ are coprime, so by \cref{lemma:coprimepullback},
    if we instead take the pullback in each column we get a diagram equivalent to:
    \[
    \loopspacesym X \to \loopspacesym X \to \loopspacesym X \to \loopspacesym X \to \cdots
    \]
    Let us assume for a moment that the maps in this diagram are the identity.
    Then the colimit of this diagram is $\loopspacesym X$,
    so the result follows from the commutativity of pullbacks and filtered colimits \citep{DoornRijkeSojakova}.

    To prove that the maps are the identity observe that, by \cref{lemma:coprimepullback},
    in the $n$-th place we have a map of pullbacks:
    \begin{center}
        \begin{tikzpicture}
          \matrix (m) [matrix of math nodes,row sep=3em,column sep=2em,minimum width=2em,nodes={text height=1.75ex,text depth=0.25ex}]
          { \loopspacesym X & & & \loopspacesym X \\
               &  \loopspacesym X & \loopspacesym X & \\
               &  \loopspacesym X & \loopspacesym X & \\
            \loopspacesym X & & & \loopspacesym X \\};
          \path[-stealth]
            (m-1-1) edge [dashed] node [above] {} (m-1-4)
                    edge node [left] {$\sigma_n$} (m-4-1)
                    edge node [above] {$\,\,\,\,\,\,\,\rho_n$} (m-2-2)
            (m-4-1) edge node [below] {$\,\,\,\,\,\,\,\rho_n$} (m-3-2)
                    edge node [below] {$s_{n+1}$} (m-4-4)
            (m-2-2) edge node [left] {$\sigma_n$} (m-3-2)
                    edge node [above] {$r_{n+1}$} (m-2-3)
            (m-3-2) edge node [above] {$t_{n+1}$} (m-3-3)
            (m-1-4) edge node [above] {$\rho_{n+1}\,\,\,\,\,\,$} (m-2-3)
                    edge node [right] {$\sigma_{n+1}$} (m-4-4)
            (m-2-3) edge node [right] {$\sigma_{n+1}$} (m-3-3)
            (m-4-4) edge node [below] {$\rho_{n+1}\,\,\,\,\,\,$} (m-3-3)
            ;
        \end{tikzpicture}
    \end{center}
    where the pullbacks are the left and right squares, and the dotted map is 
    the map we want to show is the identity. Since this map is induced by the universal
    property of pullbacks, it is enough to check that the identity makes the cube commute,
    which is straightforward.
\end{proof}

\subsection{A fracture theorem for truncated nilpotent types}
\label{fracnilp}

We now prove an analogous fracture theorem, but for truncated nilpotent types.
Getting rid of the truncation hypothesis remains an open problem.

We start with a fracture theorem for abelian groups.
\begin{prop}
    Given an abelian group $A$ and $n \geq 1$, if for all $n,m : \N$, $R_n$ is coprime to $S_m$, then the square
    \begin{center}
        \begin{tikzpicture}
          \matrix (m) [matrix of math nodes,row sep=2em,column sep=2em,minimum width=2em,nodes={text height=1.75ex,text depth=0.25ex}]
            { K(A,n) & L_{\degg(S)}K(A,n) \\
              L_{\degg(R)}K(A,n) & L_{\degg(T)}K(A,n) \\};
          \path[-stealth]
            (m-1-1) edge node [above] {} (m-1-2)
                    edge node [left] {} (m-2-1)
            (m-2-1) edge node [above] {} (m-2-2)
            (m-1-2) edge node [right]{} (m-2-2)
            ;
        \end{tikzpicture}
    \end{center}
    is a pullback.
\end{prop}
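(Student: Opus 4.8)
The plan is to deduce this from the simply connected fracture theorem \cref{theorem:fracturetheorem} by a dimension shift. For every $n\geq 1$ the type $K(A,n+1)$ is simply connected, so \cref{theorem:fracturetheorem} (whose hypothesis is exactly the coprimality assumption here) immediately gives that the square with corners $K(A,n+1)$, $L_{\degg(S)}K(A,n+1)$, $L_{\degg(R)}K(A,n+1)$ and $L_{\degg(T)}K(A,n+1)$, with the evident localization units as maps, is a pullback. I would then apply the loop space functor $\loopspacesym$ to this square. Since $\loopspacesym$ preserves pullbacks (identity types commute with pullbacks, so $\loopspacesym(B\times_D C)\simeq\loopspacesym B\times_{\loopspacesym D}\loopspacesym C$), the looped square is again a pullback, and it only remains to identify it with the square in the statement.

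For that identification I would use the standard equivalence $\loopspacesym K(A,n+1)\simeq K(A,n)$ together with \cite[Corollary~4.13]{CORS}, which says that looping commutes with $\degg(S)$-localization on simply connected types. Applying the latter to the simply connected type $K(A,n+1)$ yields $\loopspacesym L_{\degg(P)}K(A,n+1)\simeq L_{\degg(P)}\loopspacesym K(A,n+1)\simeq L_{\degg(P)}K(A,n)$ for each of $P\in\{R,S,T\}$, the last step being functoriality of $L_{\degg(P)}$ applied to $\loopspacesym K(A,n+1)\simeq K(A,n)$; here $L_{\degg(T)}$ is understood as in the discussion preceding \cref{theorem:fracturetheorem}. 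These four equivalences turn the looped pullback square into precisely the square appearing in the statement.

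The only point that genuinely needs care is checking that these cornerwise equivalences are compatible with the maps, i.e.\ that they assemble into an equivalence of cospans rather than merely matching the four corners, so that the pullback property actually transfers; this uses naturality of the localization units under $\loopspacesym$ and the naturality of the equivalence of \cite[Corollary~4.13]{CORS}. I would also note that for $n\geq 2$ the detour through $\loopspacesym$ is superfluous, since $K(A,n)$ is then itself simply connected and \cref{theorem:fracturetheorem} applies directly; the looping argument is needed only to cover the case $n=1$.
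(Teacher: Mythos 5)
Your proposal is correct and follows essentially the same route as the paper: apply \cref{theorem:fracturetheorem} to the simply connected type $K(A,n+1)$, loop the resulting pullback square, and then use the commutation of loops with $\degg(S)$-localization (from \cite{CORS}) together with $\loopspacesym K(A,n+1)\simeq K(A,n)$ to identify the looped square with the one in the statement. Your additional remarks on naturality and on the $n\geq 2$ shortcut are sensible elaborations of the same argument.
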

\begin{proof}
    Use \cref{theorem:fracturetheorem}, taking $X$ to be $K(A,n+1)$.
    Looping the pullback square, we get a pullback square.
    To conclude use the fact that, since $A$ is abelian, we have
    $\loopspacesym\left(L_{\degg(R)}K(A,n+1)\right)\simeq L_{\degg(R)}K(A,n)$ by \citep[Lemma~4.13]{CORS},
    and likewise for $S$ and $T$.
\end{proof}

\begin{thm}
    If $X$ is truncated and nilpotent and for all $n,m : \N$, $R_n$ is coprime to $S_m$, then the square
    \begin{center}
        \begin{tikzpicture}
          \matrix (m) [matrix of math nodes,row sep=2em,column sep=2em,minimum width=2em,nodes={text height=1.75ex,text depth=0.25ex}]
          { X & L_{\degg(S)}X \\
              L_{\degg(R)}X & L_{\degg(T)}X \\};
          \path[-stealth]
            (m-1-1) edge node [above] {} (m-1-2)
                    edge node [left] {} (m-2-1)
            (m-2-1) edge node [above] {} (m-2-2)
            (m-1-2) edge node [right]{} (m-2-2)
            ;
        \end{tikzpicture}
    \end{center}
    is a pullback.
\end{thm}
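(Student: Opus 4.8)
The plan is to induct on the nilpotency degree of a nilpotent structure on $X$, in exactly the way $\degg(S)$-localizations of truncated nilpotent types are built in \cref{remark:localizationtruncated}. Being a pullback is a mere proposition, so we may pick such a structure. If $X$ is contractible the square has four contractible corners and there is nothing to prove; otherwise $X$ is the fiber of a pointed principal $K(A,m)$-bundle classified by a pointed map $g : W \to K(A,m+1)$, where $W$ is a pointed, connected, truncated type equipped with a nilpotent structure of strictly smaller degree and $m \geq 1$, so that $K(A,m+1)$ is simply connected.

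Next I would assemble a commutative cube over the map $g$. By the inductive hypothesis the fracture square for $W$ is a pullback. By the fracture theorem for Eilenberg--Mac Lane spaces proved just above, applied with $m+1$ in place of $n$, the fracture square for $K(A,m+1)$ is a pullback; moreover, since $K(A,m+1)$ is simply connected, its $\degg(R)$-, $\degg(S)$- and $\degg(T)$-localizations are $K(A_R,m+1)$, $K(A_S,m+1)$ and $K(A_T,m+1)$, where $A_R,A_S,A_T$ are the corresponding algebraic localizations of $A$ (by \cref{proposition:L'simplyconnected} together with the computation of localizations of Eilenberg--Mac Lane spaces in \cite{CORS}). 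By \cref{proposition:preservesprincipalfibrations}, each of $\degg(R)$-, $\degg(S)$- and $\degg(T)$-localization preserves the fiber sequence $X \to W \xrightarrow{g} K(A,m+1)$; in particular $L_{\degg(R)}X$ is the fiber of the localized classifying map $L_{\degg(R)}W \to K(A_R,m+1)$, and similarly for $S$ and $T$. Using naturality of the localization units, $g$ and its three localizations become the four vertical maps of a cube whose top face is the $W$-fracture square, whose bottom face is the $K(A,m+1)$-fracture square, and whose vertical fibers, taken over the base point and its images, are $X$, $L_{\degg(R)}X$, $L_{\degg(S)}X$ and $L_{\degg(T)}X$.

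The last step is to conclude that taking vertical fibers of such a cube sends its pair of pullback faces (top and bottom) to a pullback square. This is an instance of the fact that limits commute with limits: for a map of cospans, the fiber of the induced map of pullbacks is the pullback of the induced cospan of fibers, which one can check directly with sigma types or by iterating the two pullback lemma \cite[Lemma~4.1.11]{AKL}. Applied to our cube, the map of pullbacks is $g$ itself --- via the comparison equivalences furnished by the inductive hypothesis and the Eilenberg--Mac Lane case --- whose fiber is $X$, while the pullback of the cospan of fibers is precisely the apex of the $X$-fracture square. A routine verification that the resulting equivalence is the canonical comparison map of the fracture square then closes the induction.

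I expect the main obstacle to be the coherence bookkeeping in this final step: choosing the homotopies so that the side faces of the cube genuinely are the naturality squares of localization, tracking base points so that all four vertical fibers sit over compatible points, and confirming that the equivalence produced by ``limits commute'' is the canonical fracture comparison map rather than merely some equivalence between its source and target. The remaining ingredients --- the inductive setup, the preservation of principal fibrations under localization, and the Eilenberg--Mac Lane base case --- are essentially mechanical given the results already in place.
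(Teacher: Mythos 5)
Your proof is correct and follows essentially the same argument as the paper's: induct on nilpotency degree, peel off a principal fibration $K(A,m)\to X\to W\to K(A,m+1)$, use \cref{proposition:preservesprincipalfibrations} to identify the localizations of $X$ with the fibers of the localized classifying maps, and then invoke commutativity of limits. The only difference is diagrammatic packaging: you organize the limit-commutativity step as a cube whose top and bottom faces are the $W$- and $K(A,m+1)$-fracture squares, while the paper equivalently arranges the same data as a $3\times 3$ grid of cospans (rows the localized fiber sequences, columns the three fracture cospans) and computes the total limit by rows-then-columns versus columns-then-rows.
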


The following proof is classical, and appears in \citep{MayPonto}.

\begin{proof}
    Since we have to prove a mere proposition, we can assume given a nilpotent structure for $X$.
    We proceed by induction on the nilpotency degree of $X$.
    The base case is clear.
    For the inductive step, let $K(A,n) \to X \to X'$ be a principal fibration,
    and consider its classifying map $X' \to K(A,n+1)$.
    We can form the following diagram 
    \begin{center}
        \begin{tikzpicture}
          \matrix (m) [matrix of math nodes,row sep=2em,column sep=2em,minimum width=2em,nodes={text height=1.75ex,text depth=0.25ex}]
            { L_{\degg(R)}X' & L_{\degg(R)}K(A,n+1) & \unit \\
              L_{\degg(T)}X' & L_{\degg(T)}K(A,n+1) & \unit \\
              L_{\degg(S)}X' & L_{\degg(S)}K(A,n+1) & \unit. \\};
          \path[-stealth]
            (m-1-1) edge node [above] {} (m-1-2)
                    edge node [left] {} (m-2-1)
            (m-2-1) edge node [above] {} (m-2-2)
            (m-1-2) edge node [right]{} (m-2-2)
            (m-3-1) edge node [right] {} (m-2-1)
                    edge node [right] {} (m-3-2)
            (m-3-2) edge node [right] {} (m-2-2)
            (m-1-3) edge node [above] {} (m-1-2)
                    edge node [right] {} (m-2-3)
            (m-3-3) edge node [right] {} (m-2-3)
                    edge node [above] {} (m-3-2)
            (m-2-3) edge node [right] {} (m-2-2)

            ;
        \end{tikzpicture}
    \end{center}
    Using \cref{proposition:preservesprincipalfibrations}, we see that
    taking the pullback of each row gives us the cospan $L_{\degg(R)}X \rightarrow L_{\degg(T)}X \leftarrow L_{\degg(S)}X$.
    On the other hand, by taking the pullback of each column, we get the cospan $X' \rightarrow K(A,n) \leftarrow \unit$.
    So by the commutativity of limits, we are done.
\end{proof}

\section{Conclusions}

We have developed the theory of nilpotent types in Homotopy Type Theory,
including some of their cohomological properties
and the basic properties of their localizations away from sets of numbers, such as
the effect of localization on their homotopy groups.

As part of this, we developed the theory of $K(A,n)$-bundles, and their induced group actions.
We saw how Homotopy Type Theory helps in seeing this theory as a generalization of the theory of
groups and group actions.

\bibliographystyle{dcu}
\bibliography{references}

\end{document}